\newtheorem{thm}{Theorem}[section]
\newtheorem{prop}[thm]{Proposition}
\newtheorem{lem}[thm]{Lemma}
\newtheorem{cor}[thm]{Corollary}
\newtheorem{prop-def}[thm]{Proposition-Definition}
\theoremstyle{definition}
\newtheorem{defn}[thm]{Definition}
\newtheorem{remark}[thm]{Remark}
\newtheorem{exam}[thm]{Example}
\newcommand{\nc}{\newcommand}
 \nc{\mbibitem}[1]{\bibitem{#1}} 
 \nc{\mrm}[1]{{\rm #1}}
\nc{\ac}{\mathrm{\textup{!`}}}
\nc{\bs}{\bar{S}}
\nc{\pl}{\cdot}
 \nc{\la}{\longrightarrow}
\nc{\ot}{\otimes}
 \nc{\rar}{\rightarrow}
\nc{\PLA}{{\mathrm{RBS}}}
\nc{\bfk}{{\bf k}}
\nc{\Alg}{\mathsf{Alg}}
\nc{\C}{{\mathrm{C}}}
\nc{\RBA}{\mathsf{RBS}}
\nc{\RBO}{\mathsf{RBSO}}
\nc{\End}{\mrm{End}}
\nc{\Ext}{\mrm{Ext}}
\nc{\Fil}{\mrm{Fil}}
\nc{\Fr}{\mrm{Fr}}
\nc{\Frob}{\mrm{Frob}}
\nc{\Gal}{\mrm{Gal}}
\nc{\GL}{\mrm{GL}}
\nc{\Hom}{\mrm{Hom}}
\nc{\Hoch}{\mrm{Hoch}}
\nc{\hsr}{\mrm{H}}
\nc{\hpol}{\mrm{HP}}
\nc{\im}{\mrm{im}}
\nc{\Id}{\mrm{Id}}
\nc{\id}{\mrm{Id}}
\nc{\h}{\mrm{H}}
\nc{\Alt}{\mrm{Alt}}
\nc{\Irr}{\mrm{Irr}}
\nc{\incl}{\mrm{incl}}
\nc{\length}{\mrm{length}}
\nc{\NLSW}{\mrm{NLSW}}
\nc{\Lie}{\mrm{Lie}}
\nc{\mchar}{\rm char}
\nc{\mpart}{\mrm{part}}
\nc{\ql}{{\QQ_\ell}}
\nc{\qp}{{\QQ_p}}
\nc{\rank}{\mrm{rank}}
\nc{\rcot}{\mrm{cot}}
\nc{\rdef}{\mrm{def}}
\nc{\rdiv}{{\rm div}}
\nc{\rmH}{ {\mathrm{H}}}
\nc{\rtf}{{\rm tf}}
\nc{\rtor}{{\rm tor}}
\nc{\res}{\mrm{res}}
\nc{\Sh}{{\mathrm{Sh}}}
\nc{\sh}{\mathrm{\overline{Sh}}}
\nc{\SL}{\mrm{SL}}
\nc{\Spec}{\mrm{Spec}}
\nc{\sgn}{{\mathrm{sgn}}}
\nc{\tor}{\mrm{tor}}
\nc{\Tr}{\mrm{Tr}}
\nc{\tr}{\mrm{tr}}
\nc{\wt}{\mrm{wt}}
\nc{\op}{\mrm{op}}
\nc{\s}{\mrm{S}}
\nc{\ra}{\rightarrow}
\nc{\rH}{\mathrm{H}}
\nc{\RBS}{\mathfrak{RBS}}
\nc{\RBSinfty}{{\mathfrak{RBS}_\infty}}
\nc{\Int}{\mathbf{Int}}
\nc{\Lea}{\mathbf{Leaves}}
\nc{\Pare}{\mathbf{Parent}}
\def\lbb{\{\kern -.19em\{}
\def\rbb{\}\kern -.19em\}}
\nc{\BA}{{\mathbb A}}   \nc{\CC}{{\mathbb C}}
\nc{\DD}{{\mathbb D}}   \nc{\EE}{{\mathbb E}}
\nc{\FF}{{\mathbb F}}   \nc{\GG}{{\mathbb G}}
\nc{\HH}{ \mathrm{HH}}   \nc{\LL}{{\mathbb L}}
\nc{\NN}{{\mathbb N}}   \nc{\PP}{{\mathbb P}}
\nc{\QQ}{{\mathbb Q}}   \nc{\RR}{{\mathbb R}}
\nc{\TT}{{\mathbb T}}   \nc{\VV}{{\mathbb V}}
\nc{\ZZ}{{\mathbb Z}}   \nc{\TP}{\widetilde{P}}
\nc{\m}{{\mathbbm m}}
\nc{\cala}{{\mathcal A}}    \nc{\calc}{{\mathcal C}}
\nc{\cald}{\mathcal{D}}     \nc{\cale}{{\mathcal E}}
\nc{\calf}{{\mathcal F}}    \nc{\calg}{{\mathcal G}}
\nc{\calh}{{\mathcal H}}    \nc{\cali}{{\mathcal I}}
\nc{\call}{{\mathcal L}}    \nc{\calm}{{\mathcal M}}
\nc{\caln}{{\mathcal N}}    \nc{\calo}{{\mathcal O}}
\nc{\calp}{{\mathcal P}}    \nc{\calr}{{\mathcal R}}
\nc{\cals}{{\mathcal S}}    \nc{\calt}{{\Omega}}
\nc{\calv}{{\mathcal V}}    \nc{\calw}{{\mathcal W}}
\nc{\calx}{{\mathcal X}}
\nc{\fraka}{{\mathfrak a}}
\nc{\frakb}{\mathfrak{b}}
\nc{\frakg}{{\frak g}}
\nc{\frakl}{{\frak l}}
\nc{\fraks}{{\frak s}}
\nc{\frakt}{{\mathfrak{T}}}
\nc{\frakm}{{\frak m}}
\nc{\frakM}{{\frak M}}
\nc{\frakp}{{\frak p}}
\nc{\frakW}{{\frak W}}
\nc{\frakX}{{\frak X}}
\nc{\frakS}{{\frak S}}
\nc{\frakA}{{\frak A}}
\nc{\frakC}{{\frak{C}}}
\nc{\frakx}{{\frakx}}
\nc{\red}{\color{red}}
\nc{\blue}{\color{blue}}
\nc{\mlabel}[1]{\label{#1}}  
\nc{\mcite}[1]{\cite{#1}}  
\nc{\mref}[1]{\ref{#1}}  
\nc{\meqref}[1]{\eqref{#1}}  
\nc{\kai}[1]{\textcolor{blue}{\underline{Kai:}#1 }}
\begin{document}

\title[Homotopy  Rota-Baxter systems]{Deformation theory  and Koszul duality for  Rota-Baxter systems}

\author{Yufei Qin, Kai Wang and Guodong Zhou}

\address{Yufei Qin,
  School of Mathematical Sciences,   Key Laboratory of Mathematics and Engineering Applications (Ministry of Education),   Shanghai Key laboratory of PMMP,
  East China Normal University,
 Shanghai 200241,
   China}
   \email{290673049@qq.com}

\address{Kai Wang, School of Mathematical Sciences, University of Science and Technology of China, Hefei, Anhui Provience 230026, China}

	\email{wangkai17@ustc.edu.cn}

\address{Guodong Zhou,
  School of Mathematical Sciences,  Key Laboratory of Mathematics and Engineering Applications (Ministry of Education),   Shanghai Key laboratory of PMMP,
  East China Normal University,
 Shanghai 200241,
   China}
   
\email{gdzhou@math.ecnu.edu.cn}

\date{\today}

\begin{abstract}
This paper  investigates  Rota-Baxter systems in the sense of Brzezi\'nski   from the perspective of operad theory. The minimal model of the Rota-Baxter system operad is constructed,   equivalently  a concrete construction of its Koszul dual homotopy cooperad is given. The concept of homotopy Rota-Baxter systems and the $L_\infty$-algebra that governs   deformations of a Rota-Baxter system are derived from the Koszul dual homotopy cooperad.  The notion of infinity-Yang-Baxter pairs is introduced, which is a higher-order generalization of the traditional Yang-Baxter pairs.  It is shown that  a homotopy Rota-Baxter system structure on the endomorphism algebra of a graded space is equivalent to an  associative infinity-Yang-Baxter pair on this graded algebra, thereby generalizing the classical correspondence between Yang-Baxter pairs and Rota-Baxter systems.

\end{abstract}

\subjclass[2020]{
16E40   
16S80   
17B38   
18M65  
18M70  
}

\keywords{homotopy Rota-Baxter system; Koszul dual homotopy cooperad; minimal model; operad; Rota-Baxter system; Yang-Baxter pair; infinity-Yang-Baxter pair}

\maketitle

\tableofcontents

\allowdisplaybreaks

\section*{Introduction}

The concept of the “classical Yang-Baxter equation” (CYBE) originated from the pioneering work of Faddeev and his collaborators on inverse scattering theory \cite{FT79, FT87}. Later, Belavin and Drinfeld systematically studied the solutions of the CYBE. This equation also arises as a special case of the Schouten bracket in differential geometry \cite{Sch40} and is interpreted as the “classical limit” of the quantum Yang-Baxter equation \cite{Bel81}. Since the 1980s, the CYBE has played a central role in various fields of mathematics and mathematical physics, including symplectic geometry, Poisson geometry, integrable systems, quantum groups, and quantum field theory (see \cite{CP94} and references therein). Initially formulated in tensor form, the CYBE has also been explored in operator form, following significant contributions from Semenov-Tian-Shansky \cite{STS83}, Kupershmidt \cite{Kup99}, Aguiar \cite{Agu00b}, Bai \cite{Bai07}, and others. This operator formulation is closely connected to the concept of the Rota-Baxter operator \cite{Bax60, Rot69, Rot95}.

Rota-Baxter algebras, originally introduced as Baxter algebras, first appeared in Baxter’s study of probability theory \cite{Bax60}. The subject was further developed by Rota \cite{Rot69} and Cartier \cite{Car72}, among others, leading to the term “Rota-Baxter algebras”.  The field experienced renewed interest through the works of Guo et al. \cite{Agu01, GK00a, GK00b}, resulting in numerous applications and connections to various mathematical disciplines. Rota-Baxter algebras have been utilized in combinatorics \cite{Rot95}, renormalization in quantum field theory \cite{CK00}, multiple zeta values in number theory \cite{GZ08}, operad theory \cite{Agu01, BBGN13}, Hopf algebras \cite{CK00}, and the Yang-Baxter equations \cite{Bai07}, among other areas. In 1983, Semenov-Tian-Shansky \cite{STS83} demonstrated that a solution of the CYBE in a Lie algebra corresponds to a Rota-Baxter operator of weight zero on the same Lie algebra. Kupershmidt \cite{Kup99} later established that a skew-symmetric solution of the CYBE provides a relative Rota-Baxter operator. In 2000, Aguiar introduced the associative Yang-Baxter equation \cite{Agu00b} and proved that a solution to this equation equips an associative algebra with a Rota-Baxter operator \cite{Agu00a}. Bai \cite{Bai07} systematically studied operator-form solutions of the Yang-Baxter equations, while Gubarev \cite{Gub21} and Zhang, Gao, and Zheng \cite{ZGZ18p} independently demonstrated that solutions to the associative Yang-Baxter equation in matrix algebras correspond to Rota-Baxter algebra structures on those algebras.

The concepts of Rota-Baxter systems and associative Yang-Baxter pairs, which serve as binary-system analogs of Rota-Baxter operators and associative Yang-Baxter equations, respectively, were introduced by Brzeziński \cite{Brz16a} in his effort to understand the Jackson $q$-integral as a Rota-Baxter operator. These notions broaden and deepen the connections among three algebraic frameworks: Rota-Baxter algebras, dendriform algebras \cite{Lod01}, and infinitesimal bialgebras \cite{Agu00b}. 
 {Furthermore, Brzezi\'nski \cite{Brz16a}  proved that an associative Yang-Baxter pair can give a Rota-Baxter system structure within an associative algebra.}  Subsequently, Das \cite{Das20p} introduced generalized Rota-Baxter systems on $A_\infty$-algebras and $A_\infty$-bimodules.   Meanwhile,   Liu,   Wang and   Yin \cite{LQWY22p} constructed the   cohomology theory of Rota-Baxter systems which governs the simultaneous deformations of operators and algebra structures in Rota-Baxter systems. The Rota-Baxter system has proven to be highly interrelated with a variety of algebraic structures.  Qiu and Chen established linear bases for free Rota-Baxter systems, with which  they found that there is a left counital Hopf algebra structure on a free Rota-Baxter  system \cite{QC18}.    Li and Wang  \cite{LW23}   introduced a variation of Rota-Baxter systems on Lie algebras and they studied the integration of such structures on Lie groups.  Afterwards, they proposed the notion of such variant Rota-Baxter systems on  Hopf algebras \cite{LW23p}.  Ma and Li \cite{ML21p} introduced the notion of associative (BiHom-)Yang-Baxter pairs with weight, which serve as the BiHom-associative algebraic counterpart to the associative Yang-Baxter pair; see also \cite{GLLSW22}. For other developments, see
\cite{Brz16b, Brz22, MMS21, ZZG22p}.

The deformation theory of mathematical structures has evolved through the foundational work of numerous mathematicians, including Gerstenhaber, Nijenhuis, Richardson, Deligne, Schlessinger, Stasheff, Goldman, Millson, and others. Central to this theory is the principle that the deformation of a mathematical object can be described by a differential graded (dg) Lie algebra or, more generally, an $L_\infty$-algebra associated with the object. The complex underlying this algebra is known as the deformation complex. Lurie \cite{Lur} and Pridham \cite{Pri10} have rigorously formalized this principle into a theorem in characteristic zero, articulated within the framework of infinity categories. Consequently, a fundamental problem in the field is the explicit construction of the dg Lie algebra or $L_\infty$-algebra that governs the deformation theory of a given mathematical object.

Another important aspect of studying algebraic structures is exploring their homotopy versions, similar to how $A_\infty$-algebras relate to traditional associative algebras. The goal is often to find a minimal model for the operad that governs the algebraic structure. When the operad is Koszul, Koszul duality for operads \cite{GK94, GJ94}, developed by Ginzburg and Kapranov, provides a homotopy version of the algebraic structure through the cobar construction of the Koszul dual cooperad, serving as a minimal model. However, constructing minimal models becomes much harder when the operad is not Koszul, and only a few examples exist \cite{CGWZ24, WZ22}.
For example, G\'alvez-Carrillo, Tonks, and Vallette \cite{GCTV12} created a cofibrant resolution of the Batalin-Vilkovisky operad using inhomogeneous Koszul duality theory, but their resolution is not minimal. Later, Drummond-Cole and Vallette \cite{DCV13} found a minimal model that is a deformation retract of the previous cofibrant resolution. Dotsenko and Khoroshkin \cite{DK13} also constructed cofibrant resolutions for shuffle monomial operads using the inclusion-exclusion principle and suggested a method to find cofibrant resolutions for operads defined by a Gr\"obner basis \cite{DK10} by deforming those of the corresponding monomial operads.
 	
 	The purpose of this paper is to study the homotopy theory and deformation theory of Rota-Baxter systems from the perspective of operad theory. In recent years, there have been many works on the deformation theory of Rota-Baxter algebras. Tang, Bai, Guo, and Sheng \cite{TBGS19} developed the deformation and cohomology theory of \(\mathcal{O}\)-operators (also called relative Rota-Baxter operators) on Lie algebras. Das \cite{Das20} developed a similar theory for Rota-Baxter algebras of weight zero. Lazarev, Sheng, and Tang \cite{LST21} succeeded in establishing the deformation and cohomology theory of relative Rota-Baxter Lie algebras of weight zero, finding applications to triangular Lie bialgebras. They determined the \(L_\infty\)-algebra that controls deformations of a relative Rota-Baxter Lie algebra and introduced the notion of a homotopy relative Rota-Baxter Lie algebra. Later, Das and Misha also determined the \(L_\infty\)-structures controlling the deformation of Rota-Baxter operators of weight zero on associative algebras \cite{DM20}. Wang and Zhou \cite{WZ22} constructed the minimal model of the operad governing Rota-Baxter algebras of arbitrary weight, from which they provided the controlling \(L_\infty\)-algebra and cochain complex for the deformations of Rota-Baxter algebras of arbitrary weight.
 	The deformation theory of Rota-Baxter systems has rarely been studied. Das and Guo \cite{DG23} constructed a graded Lie algebra  controlling the deformations of the operator part in Rota-Baxter systems on Leibniz algebras.
 In this paper, we will construct the minimal model and Koszul dual homotopy cooperad of the operad for Rota-Baxter systems explicitly. From this minimal model, the homotopy version of Rota-Baxter systems, the controlling \(L_\infty\)-algebra, and the deformation cochain complex of Rota-Baxter systems can be naturally derived. Moreover, we will introduce the notion of  infinity-Yang-Baxter pairs, which can be considered as a higher version of the classical Yang-Baxter pairs, and we will study the relationship between  infinity-Yang-Baxter pairs and homotopy Rota-Baxter systems.

The paper is organized as follows: In Section~\ref{Sec: Rota-Baxter systems and associative Yang-Baxter pairs}, we begin by fixing some notations in Subsection~\ref{Subsect: Rota-Baxter systems}. We then recall the definitions of Rota-Baxter systems and associative Yang-Baxter pairs. In particular, we prove that solutions of associative Yang-Baxter pairs on $ M_n(\bfk) $ are equivalent to Rota-Baxter system operators on $ M_n(\bfk) $; see Theorem~\ref{YBP and RBS}. In Section~\ref{Section: Koszul dual cooperad}, we construct a homotopy cooperad, which serves as the Koszul dual of the operad for Rota-Baxter systems. Building on this, in Section~\ref{Section: minimal model RBSA}, we show that the cobar construction of the homotopy cooperad from Section~\ref{Section: Koszul dual cooperad} provides the minimal model for Rota-Baxter systems. For clarity, some technical details regarding the minimal model construction are presented in Appendix~\ref{Appendix: the minimal model for monomimal operad}. In Section~\ref{Section: Homotopy Rota-Baxter systems and infinity-Yang-Baxter pairs}, we introduce the notion of homotopy Rota-Baxter systems based on the minimal model of Rota-Baxter systems and further develop the concept of infinity-Yang-Baxter pairs. We also prove that a homotopy Rota-Baxter system structure on the endomorphism algebra of a graded space is equivalent to an associative infinity-Yang-Baxter pair on this graded algebra. Finally, in Section~\ref{Section: Linfinty algebras}, we explicitly describe the controlling  $L_\infty$-algebra and the deformation cochain complex for deformations of Rota-Baxter systems.
 	
Throughout this paper, let $\bfk$ be a field of characteristic $0$.  Except specially stated,  vector spaces are  $\bfk$-vector spaces and  all    tensor products and Hom-spaces are taken over $\bfk$.

\bigskip

\section{Rota-Baxter systems   and associative Yang-Baxter pairs}\label{Sec: Rota-Baxter systems   and associative Yang-Baxter pairs}

\subsection{Notations}\label{Subsec:Preliminaries}\

A   (homologically) graded     space  is a $\mathbb{Z}$-indexed family of vector spaces $V=\{V_n\}_{n\in \ZZ}$.    Elements of $\cup_{n\in \ZZ} V_n$ are   homogeneous with
degree, written as   $|v|=n$,  for  $v\in V_n$.

Given two graded   spaces $V$ and $W$, a linear map $f: V\to W$ such that  $f(V_n)\subseteq  W_{n+r}$ for all $n$ is
called a graded map  of degree $r$ and denote  $|f|=r$.
Write  $$\mathrm{Hom}(V, W)_r=\prod_{p\in \ZZ} \Hom_\bfk(V_p, W_{p+r})$$  for  the space of graded maps of degree $r$.
The graded space  $\mathrm{Hom}(V, W)$ is defined to be $\{\mathrm{Hom}(V, W)_r\}_{r\in \ZZ}$.

The  tensor product  $V\otimes W$ of  two graded   spaces $V$ and $W$
is given by
\[(V\otimes W)_n =\bigoplus_{p+q=n} V_p \otimes W_q.\]
We will use Sweedler's notation to represent elements in the tensor products of graded spaces. Let \( V^1 \otimes \cdots \otimes V^n \) be the tensor product of graded spaces \( V^1, \ldots, V^n \). An element in this tensor product can be written as \( r = \sum_{i_1, \ldots, i_n} r^{[1]}_{i_1} \otimes \cdots \otimes r^{[n]}_{i_n} \) with \( r_{i_k}^{[k]} \in V^k \). For simplicity, we will omit the subscripts \( i_k \) and instead write:
	\[ r = \sum r^{[1]} \otimes \cdots \otimes r^{[n]} .\]

Let $(A,\cdot)$ be a graded  algebra.  For each $n\geqslant 1$, we define a map $\mathscr{F}$  from $A^{\otimes n+1}$ to $ \Hom(A^{\otimes n},A)$ as
\[ \mathscr{F} (a_1\otimes\cdots\otimes a_{n+1})(x_1,\ldots,x_n):= (-1)^{ \sum\limits_{k=1}^{n}\sum\limits_{j=k+1}^{n+1}|x_k||a_j|}a_1\cdot x_1\cdots x_n\cdot a_{n+1},\]
where $x_1,\ldots,x_n,a_1,\ldots,a_{n+1}\in A$.

\begin{lem}\label{[] iso}
	Let $V$ be a finite dimensional graded space.	For each $k\geqslant 1$, $\mathscr{F}:\End(V)^{\otimes k+1}\rightarrow \Hom(\End(V)^{\otimes n},\End(V)) $ is an isomorphism.
\end{lem}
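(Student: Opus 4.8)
The plan is to exhibit $\mathscr{F}$ as an injective, degree-preserving linear map between two graded spaces of equal finite total dimension; injectivity together with the dimension count then forces it to be an isomorphism (note that here we take $k=n$, matching the source $\End(V)^{\otimes n+1}$ of the map $\mathscr{F}$ with its target $\Hom(\End(V)^{\otimes n},\End(V))$). First I would record that $\mathscr{F}$ is a morphism of graded spaces of degree $0$: if $a_1\otimes\cdots\otimes a_{n+1}$ has degree $r=\sum_i|a_i|$, then, since the multiplication of the graded algebra $\End(V)$ is additive in degrees, $\mathscr{F}(a_1\otimes\cdots\otimes a_{n+1})$ raises total degree by exactly $r$ and so lies in $\Hom(\End(V)^{\otimes n},\End(V))_r$; the sign prefactor in the definition is a fixed invertible scalar and is irrelevant to this. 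Next I would compare dimensions. Writing $D=\dim V$, we have $\dim\End(V)=D^2$, and for finite dimensional graded spaces $U,W$ one computes $\sum_r\dim\Hom(U,W)_r=\sum_p\dim U_p\sum_q\dim W_q=(\dim U)(\dim W)$. Hence both $\End(V)^{\otimes n+1}$ and $\Hom(\End(V)^{\otimes n},\End(V))$ have total dimension $D^{2(n+1)}$.

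The heart of the matter is injectivity, which I would prove with matrix units. Fix a homogeneous basis $(e_i)$ of $V$ and let $E_{ij}$ (defined by $E_{ij}e_l=\delta_{jl}e_i$) be the associated homogeneous basis of $\End(V)$, obeying $E_{ij}E_{kl}=\delta_{jk}E_{il}$. A general element is $\xi=\sum c_{\mathbf a,\mathbf b}\,E_{a_1b_1}\otimes\cdots\otimes E_{a_{n+1}b_{n+1}}$, and evaluating $\mathscr{F}(\xi)$ on a tuple of matrix units $(E_{c_1d_1},\ldots,E_{c_nd_n})$ produces, by the multiplication rule, a telescoping product $E_{a_1b_1}E_{c_1d_1}E_{a_2b_2}\cdots E_{c_nd_n}E_{a_{n+1}b_{n+1}}$. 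This is nonzero precisely when $b_1=c_1,\ d_1=a_2,\ b_2=c_2,\ \ldots,\ d_n=a_{n+1}$, in which case it equals $\pm E_{a_1b_{n+1}}$. Therefore the coefficient of $E_{s,t}$ in $\mathscr{F}(\xi)(E_{c_1d_1},\ldots,E_{c_nd_n})$ is, up to an explicit nonzero sign, the single coefficient $c_{\mathbf a,\mathbf b}$ of the tensor $E_{s c_1}\otimes E_{d_1 c_2}\otimes\cdots\otimes E_{d_{n-1}c_n}\otimes E_{d_n t}$. As $(s,t,c_1,\ldots,c_n,d_1,\ldots,d_n)$ runs over all values, these tensors run exactly once through the entire matrix-unit basis of $\End(V)^{\otimes n+1}$, so $\mathscr{F}(\xi)=0$ forces all $c_{\mathbf a,\mathbf b}=0$. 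This gives injectivity, and with the equality of finite total dimensions it completes the proof that $\mathscr{F}$ is an isomorphism.

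The one genuinely delicate point is the index bookkeeping: one must verify that the telescoping constraints set up a bijection between the evaluation/output data $(s,t,c_\bullet,d_\bullet)$ and the tensor-factor indices $(\mathbf a,\mathbf b)$ via $s=a_1,\ c_k=b_k,\ d_k=a_{k+1},\ t=b_{n+1}$, so that each basis tensor is detected by exactly one evaluation; both sides involve $2(n+1)$ free indices, which is a useful sanity check. The signs are harmless throughout, being fixed invertible scalars determined by the degrees. As an alternative one could induct on $n$ using the tensor–hom adjunction $\Hom(\End(V)^{\otimes n},\End(V))\cong\Hom(\End(V)^{\otimes n-1},\End(\End(V)))$ and the base case $n=1$, which is the graded version of $A\otimes A^{\mathrm{op}}\cong\End(A)$ for $A=\End(V)$; but the direct matrix-unit computation is cleaner and has the virtue of exhibiting the inverse of $\mathscr{F}$ explicitly.
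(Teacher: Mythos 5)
Your proposal is correct and follows essentially the same route as the paper: both arguments work with the matrix-unit basis of $\End(V)$, exploit the telescoping identity $e_i^{\ j}e_k^{\ l}=\delta_k^j e_i^{\ l}$, and finish with the equality of (finite) total dimensions of $\End(V)^{\otimes n+1}$ and $\Hom(\End(V)^{\otimes n},\End(V))$. The only difference is which half is proved directly --- you establish injectivity by evaluating on matrix units, while the paper constructs an explicit preimage $r_{n+1}$ of an arbitrary operator $T$ to get surjectivity --- and these are mirror images of the same index bookkeeping.
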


\begin{proof}
	Let $\{e_{p} ^{~q}\}_{1\leqslant p, q\leqslant n}$ be the canonical basis of the matrix algebra $\End(V)$, satisfying
	\[e_{i} ^{~j}e_{k} ^{~l}=\delta_k^je_{i} ^{~l},\text{ for all } 1\leq i,j,k,l\leq n,\]
	where $ \delta^j_k$ is the Kronecker symbol.
	
	
	Let  $T:\End(V)^{\otimes k}\rightarrow \End(V)$ be arbitrary operator. In terms of the canonical basis, $T$ can be represented as:	for all $ 1\leq p_1,\ldots,p_n,q_1,\ldots,q_n,q_{n+1}\leq n$,
	$$ T(e_{p_1} ^{~q_1}\ot\cdots\ot e_{p_n} ^{~q_n})=\sum\limits_{1\leq q_0,p_{n+1}\leq n}(-1)^{ \sum\limits_{k=1}^{n}\sum_{j=k}^{n}|e_{p_k} ^{~q_k}||e_{q_j} ^{~p_{j+1}}|} r_{p_1,\ldots,p_n,p_{n+1}}^{q_0,q_1,\ldots,q_n}e_{q_0} ^{~p_{n+1}},$$
	with coefficients $r_{p_1,\ldots,p_n,l}^{q_0,q_1,\ldots,q_n}\in \textbf{k}$. For $r_{n+1}=\sum r_{p_1,\ldots,p_n,l}^{q_0,q_1,\ldots,q_n}e_{q_0} ^{~p_1}\ot\cdots\ot e_{q_n} ^{~p_{n+1}}$, we can calculate that  $T=\mathscr{F}(r_{n+1}) $. Thus, $\mathscr{F}$ is a surjection. Since $V$ is finite dimensional, $\mathscr{F}$ is an isomorphism.
\end{proof}


The  suspension  of a graded space $V$ is the graded space  $sV $ with $(sV)_n=V_{n-1}$ for any $n\in \ZZ$.
Write $sv\in (sV)_n$ for $v\in V_{n-1}$.
The map $s: V\to sV, v\mapsto sv$ is a graded map of degree $1$.
One can also define the  desuspension  of $V$ which is denoted by $s^{-1}V$ with  $(s^{-1}V)_n=V_{n+1}$. Write $s^{-1}v\in (s^{-1}V)_n$ for $v\in V_{n+1}$.  and
$s^{-1}: V\to s^{-1}V, v\mapsto s^{-1}v$ is a graded map of degree $-1$.

We will employ Koszul sign rule to determine signs,   that is, when   exchanging the positions of two graded objects in an expression, we have  to multiply the expression by a power of $-1$ whose  exponent is  the product of their degrees.


Let  $n\geq 1$. Let $\mathbb{S}_n$ denote the symmetric group in $n$ variables.
For $0\leq i_1, \dots, i_r\leq n$ with $i_1+\cdots+i_r=n$,  $\Sh(i_1, i_2,\dots,i_r)$ is the   set of $(i_1,\dots, i_r)$-shuffles, i.e., those permutation $\sigma\in \mathbb{S}_n$ such that
$$\sigma(1)<\sigma(2)<\dots<\sigma(i_1),  \ \sigma(i_1+1)< \dots<\sigma(i_1+i_2),\ \dots,\
\sigma(i_1+\cdots+i_{r-1}+1)< \cdots<\sigma(n).$$

Let $V=\oplus_{n\in \mathbb{Z}} V_n$ be a graded vector space. Recall that   the graded symmetric algebra $S(V)$ of $V$ is defined to be the quotient of the tensor algebra $T(V)$ by   the graded two-sided ideal $I$   generated by
$x\ot y -(-1)^{|x||y|}y\ot x$ for all homogeneous elements $x, y\in V$. For $x_1\ot\cdots\ot x_n\in V^{\ot n}\subseteq T(V)$, write $ x_1\odot x_2\odot\dots\odot x_n$ its image in $S(V)$.
For homogeneous elements $x_1,\dots,x_n \in V$ and $\sigma\in \mathbb{S}_n$, the Koszul sign $\varepsilon(\sigma;  x_1,\dots, x_n)$ is defined by
$$ x_1\odot x_2\odot\dots\odot x_n=\varepsilon(\sigma;  x_1,\dots,x_n)x_{\sigma(1)}\odot x_{\sigma(2)}\odot\dots\odot x_{\sigma(n)}\in S(V);$$
we also define
$$ \chi(\sigma;  x_1,\dots,x_n)= \sgn(\sigma)\ \varepsilon(\sigma;  x_1,\dots,x_n),$$
where $\sgn(\sigma)$ is the sign of the permutation $\sigma$.

 \subsection{Rota-Baxter systems vs associative Yang-Baxter pairs}\label{Subsect: Rota-Baxter systems}\ 
 
In this subsection, we first recall the definitions of Rota-Baxter systems and associative Yang-Baxter pairs. We will then show that these two notions, Rota-Baxter systems and associative Yang-Baxter pairs, are equivalent in the context of matrix algebras.

\begin{defn} \label{definition-RBS} \cite[Definition 2.1]{Brz16a}	A triple $((A,\mu),R,S)$ consisting of an associative  algebra $(A,\mu)$  and two $\bfk$-linear operators $ R, S : A\to A$ is called a Rota-Baxter system if, for all $a,b\in A$, the following equations hold:
		\begin{gather}
			\mu\big(R(a)\ot R(b)\big)=R\big(\mu(R(a)\ot b)+\mu(a\ot S(b))\big), \label{RBS1}\\
			\mu\big(S(a)\ot S(b)\big)=S\big(\mu(R(a)\ot b)+\mu(a\ot S(b))\big). \label{RBS2}
		\end{gather}
		And we call the pair of operators $(R, S)$ a coupled Rota-Baxter operator on the associative algebra $(A,\mu)$.
%
\end{defn}

\begin{defn}\label{definition-AYBEP} \cite[Definition 3.1]{Brz16a}
	Let $A$ be a unitary associative algebra. An associative Yang-Baxter pair is a 	pair of elements $$r=\sum  r^{[1]}\otimes r^{[2]},s=\sum  s^{[1]}\otimes s^{[2]}\in A\otimes A$$ that satisfy the following equations
	\begin{align}
		& r^{13} r^{12}-r^{12} r^{23}+s^{23} r^{13}=0, \label{Eq: YBP1}\\
		& s^{13} r^{12}-s^{12} s^{23}+s^{23} s^{13}=0,\label{Eq: YBP2}
	\end{align}
	where $r^{12}=r \otimes 1,$ $r^{13}=\sum   r^{[1]}\otimes1 \otimes r^{[2]}$, $r^{23}=1 \otimes r$, etc.
\end{defn}

\begin{prop}\label{prop-from YBEP to RBS} \cite[Proposition 3.4]{Brz16a}
	 Let $(r, s)$ be an associative Yang-Baxter pair in a unitary algebra $(A,\cdot)$. Define two operators $ \mathscr{F}(r) , \mathscr{F}(s) : A \rightarrow A$ as
	 $$ \mathscr{F}( r) (a)=\sum r^{[1]} a r^{[2]}, \quad  \mathscr{F}(s) (a)=\sum s^{[1]} a s^{[2]} .
	 $$
	 	 Then $((A,\cdot), \mathscr{F}(r), \mathscr{F}(s))$ is a Rota-Baxter system.
\end{prop}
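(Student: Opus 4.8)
The plan is to realise the two operator identities \eqref{RBS1} and \eqref{RBS2} as the images of the two tensor identities \eqref{Eq: YBP1} and \eqref{Eq: YBP2} under a single evaluation map. Fix $a,b\in A$ and consider the linear map
\[
\eta_{a,b}\colon A\otimes A\otimes A\longrightarrow A,\qquad u\otimes v\otimes w\longmapsto u\cdot a\cdot v\cdot b\cdot w,
\]
which is nothing but $\mathscr{F}(u\otimes v\otimes w)(a,b)$ in the present ungraded, unital setting, where every Koszul sign equals $1$. Since \eqref{Eq: YBP1} asserts that $r^{13}r^{12}-r^{12}r^{23}+s^{23}r^{13}=0$ as an element of $A\otimes A\otimes A$, applying the linear map $\eta_{a,b}$ yields an identity in $A$; the first step is to check that this identity is exactly \eqref{RBS1} for $R=\mathscr{F}(r)$ and $S=\mathscr{F}(s)$.

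Concretely, I would expand the three products using the componentwise multiplication of $A^{\otimes 3}$, employing two independent copies of $r$ written $\sum r^{[1]}\otimes r^{[2]}$ and $\sum \bar r^{[1]}\otimes\bar r^{[2]}$. This gives $r^{13}r^{12}=\sum r^{[1]}\bar r^{[1]}\otimes\bar r^{[2]}\otimes r^{[2]}$, $r^{12}r^{23}=\sum r^{[1]}\otimes r^{[2]}\bar r^{[1]}\otimes\bar r^{[2]}$ and $s^{23}r^{13}=\sum \bar r^{[1]}\otimes s^{[1]}\otimes s^{[2]}\bar r^{[2]}$, where the auxiliary units $1$ have been absorbed into the products. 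Applying $\eta_{a,b}$ to each summand and comparing with the direct expansions
\[
R(a)R(b)=\sum r^{[1]}a\,r^{[2]}\bar r^{[1]}b\,\bar r^{[2]},\quad R\big(R(a)b\big)=\sum \bar r^{[1]}r^{[1]}a\,r^{[2]}b\,\bar r^{[2]},\quad R\big(aS(b)\big)=\sum \bar r^{[1]}a\,s^{[1]}b\,s^{[2]}\bar r^{[2]},
\]
one sees, after relabelling the dummy indices of the two copies, that $\eta_{a,b}$ sends $r^{12}r^{23}$ to $R(a)R(b)$ and sends $r^{13}r^{12}$ and $s^{23}r^{13}$ to $R(R(a)b)$ and $R(aS(b))$ respectively; hence the vanishing of \eqref{Eq: YBP1} becomes precisely $R(a)R(b)=R\big(R(a)b+aS(b)\big)$, which is \eqref{RBS1}.

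The second identity \eqref{RBS2} is obtained by the same device applied to \eqref{Eq: YBP2}: expanding $s^{13}r^{12}$, $s^{12}s^{23}$ and $s^{23}s^{13}$ by componentwise multiplication and evaluating under $\eta_{a,b}$, the three summands match the expansions of $S(a)S(b)$, $S(R(a)b)$ and $S(aS(b))$, giving $S(a)S(b)=S\big(R(a)b+aS(b)\big)$. I expect the only real work to be bookkeeping: tracking which tensor leg each factor occupies under the componentwise product in $A^{\otimes 3}$ and then relabelling the summation indices of the two copies so that the three evaluated summands line up with the three operator terms. There is no conceptual obstacle, since $A$ is ungraded and unital so that all Koszul signs are trivial and every auxiliary unit disappears upon multiplication; the statement thus reduces to this routine index-matching verification.
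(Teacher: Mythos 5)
Your proposal is correct and is essentially the paper's own argument: the paper likewise computes the Rota--Baxter defect $\mu\circ(\mathscr{F}(r)\otimes\mathscr{F}(r))-\mathscr{F}(r)\circ\mu\circ(\mathscr{F}(r)\otimes\id)-\mathscr{F}(r)\circ\mu\circ(\id\otimes\mathscr{F}(s))$ on $a\otimes b$ and identifies it, term by term, with $\mathscr{F}$ of the Yang--Baxter combination $r^{13}r^{12}-r^{12}r^{23}+s^{23}r^{13}$ evaluated at $(a,b)$, and analogously for the second relation. Your $\eta_{a,b}$ is exactly the evaluation $\mathscr{F}(-)(a,b)$ used there, and your componentwise expansions and index matchings check out.
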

\begin{proof}For $a,b\in A$, we have
	 \begin{eqnarray*}
	 &&	\Big(  \mu\circ\big(\mathscr{F}(r)\ot \mathscr{F}(r)  \big)-\mathscr{F}(r)\circ\mu\circ\big(\mathscr{F}(r)\ot  \id\big)-\mathscr{F}(r)\circ\mu\circ\big(\id\ot \mathscr{F}(s)\big)\Big) (a\otimes b)\\
	 =	&&\sum r^{[1]}\cdot a\cdot r^{[2]}\cdot {r'}^{[1]}\cdot b\cdot {r'}^{[2]}-\sum r^{[1]}\cdot  {r'}^{[1]}\cdot a\cdot  {r'}^{[2]}\cdot b\cdot r^{[2]}-\sum r^{[1]}\cdot a\cdot {s'}^{[1]}\cdot b\cdot {s'}^{[2]}\cdot r^{[2]}\\
	 =&& \mathscr{F}(r^{13} r^{12}-r^{12} r^{23}+s^{23} r^{13})(a\otimes b).
	 \end{eqnarray*}
 Similarly, we can also show that $$\ \mu\circ\big(\mathscr{F}(s)\ot \mathscr{F}(s)  \big)-\mathscr{F}(s)\circ\mu\circ\big(\mathscr{F}(r)\ot  \id \big)-\mathscr{F}(s)\circ\mu\circ\big(\id\ot \mathscr{F}(s)\big)=\mathscr{F}(s^{13} r^{12}-s^{12} s^{23}+s^{23} s^{13}).$$
\end{proof}

Similar to    \cite[Theorem 3.4]{Gub21} and   \cite[Theorem 4.12]{ZGZ18p}, we have the following result.

\begin{thm}\label{YBP and RBS}
	The map  $\varphi: (r,s )\rightarrow (\mathscr{F}(r), \mathscr{F}(s))$ is a bijection between the set of associative Yang-Baxter pairs on $M_n(\bfk)$ and the set of coupled Rota-Baxter operators   on $M_n(\bfk)$.
\end{thm}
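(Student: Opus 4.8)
The plan is to leverage the two ingredients already established: the linear-isomorphism Lemma~\ref{[] iso} and the defect computation inside the proof of Proposition~\ref{prop-from YBEP to RBS}. Since $M_n(\bfk)=\End(\bfk^n)$ is a finite-dimensional endomorphism algebra, Lemma~\ref{[] iso} applies. Taking $k=1$ there, $\mathscr{F}\colon M_n(\bfk)^{\otimes 2}\to \End(M_n(\bfk))$ is a linear isomorphism, so the componentwise map $\varphi=(\mathscr{F},\mathscr{F})$ is already a bijection from all pairs $(r,s)\in\big(M_n(\bfk)^{\otimes 2}\big)^{2}$ onto all pairs of operators $(R,S)\in\End(M_n(\bfk))^{2}$. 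The theorem therefore reduces to checking that this ambient bijection carries the subset of associative Yang-Baxter pairs \emph{exactly} onto the subset of coupled Rota-Baxter operators.

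That $\varphi$ sends Yang-Baxter pairs to coupled Rota-Baxter operators is precisely Proposition~\ref{prop-from YBEP to RBS}, so $\varphi$ is well defined on the relevant subsets and injective there, being a restriction of a bijection. For the reverse implication, the key observation is that the computation performed in the proof of Proposition~\ref{prop-from YBEP to RBS} is an \emph{identity valid for arbitrary} $r,s$, not only for Yang-Baxter pairs. Read for the level-$k=2$ map $\mathscr{F}\colon M_n(\bfk)^{\otimes 3}\to \Hom(M_n(\bfk)^{\otimes 2},M_n(\bfk))$, it states that the defect of \eqref{RBS1} for the operators $(\mathscr{F}(r),\mathscr{F}(s))$ equals $\mathscr{F}(r^{13}r^{12}-r^{12}r^{23}+s^{23}r^{13})$, and that the defect of \eqref{RBS2} equals $\mathscr{F}(s^{13}r^{12}-s^{12}s^{23}+s^{23}s^{13})$.

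Now I would invoke Lemma~\ref{[] iso} a second time, at $k=2$: the map $\mathscr{F}$ on $M_n(\bfk)^{\otimes 3}$ is injective, so $\mathscr{F}(X)=0$ forces $X=0$. Combined with the identities above this yields the chain of equivalences: $(\mathscr{F}(r),\mathscr{F}(s))$ is a coupled Rota-Baxter operator $\iff$ both defect maps vanish $\iff$ the elements $r^{13}r^{12}-r^{12}r^{23}+s^{23}r^{13}$ and $s^{13}r^{12}-s^{12}s^{23}+s^{23}s^{13}$ both vanish $\iff$ $(r,s)$ satisfies \eqref{Eq: YBP1} and \eqref{Eq: YBP2}, i.e. is an associative Yang-Baxter pair. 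Given any coupled Rota-Baxter operator $(R,S)$, setting $r=\mathscr{F}^{-1}(R)$ and $s=\mathscr{F}^{-1}(S)$ then produces a Yang-Baxter pair with $\varphi(r,s)=(R,S)$, giving surjectivity, and the full statement follows.

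I expect the only genuinely delicate point to be bookkeeping rather than mathematics: one must keep the two invocations of $\mathscr{F}$ at the correct arities ($k=1$ to identify the operators, $k=2$ to detect the Yang-Baxter defects) and use the defect formula of Proposition~\ref{prop-from YBEP to RBS} as an unconditional identity. Because $M_n(\bfk)$ is concentrated in degree $0$, all Koszul signs are trivial and no sign subtleties intervene; the argument is the matrix-algebra incarnation of \cite[Theorem 3.4]{Gub21} and \cite[Theorem 4.12]{ZGZ18p}.
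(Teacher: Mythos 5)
Your proposal is correct and follows essentially the same route as the paper: the authors likewise write an arbitrary operator pair $(R,S)$ as $(\mathscr{F}(r),\mathscr{F}(s))$ via Lemma~\ref{[] iso}, and then invoke that lemma together with the (unconditional) defect identity from the proof of Proposition~\ref{prop-from YBEP to RBS} to get the equivalence of the two sets of conditions. You merely make explicit the two arities at which the isomorphism $\mathscr{F}$ is used, which the paper leaves implicit.
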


{
	\begin{proof}
		
		 Let $(R,S)$ be a pair of operators on $M_n(\bfk)$. In terms of the canonical basis,  $R$ and $S$  can be represented as:
		$$R(e_{p} ^{~q})=\sum_{il} r_{pl}^{iq} e_{i}^{~ l} \text{ and }S(e_{p} ^{~q})=\sum_{i l} s_{pl}^{iq} e_{i}^ {~l} ,\text{ for all } 1\leq p,q\leq n, $$
		where coefficients $r_{pl}^{iq}, s_{pl}^{iq}\in \bfk$.

		Set $r=\sum\limits_{i, j, k, l} r_{jl}^{ik} e_{i}^{~j} \otimes e_{k}^{~l}$ and $s=\sum\limits_{i, j, k, l} s_{jl}^{ik} e_{i}^{~j} \otimes e_{k}^{~l}$. Then we have $R=\mathscr{F}(r)$ and $S=\mathscr{F}(s)$. By Lemma~\ref{[] iso} and Proposition~\ref{prop-from YBEP to RBS}, $(M_n(\bfk),\mathscr{F}(r),\mathscr{F}(s))$ is Rota-Baxter system if and only if $(r,s)$ is    an associative Yang-Baxter pair in the unitary algebra $M_n(\bfk)$.

	\end{proof}

\bigskip

\section{The Koszul dual homotopy cooperad}\label{Section: Koszul dual cooperad}

In this section,   we will  construct a homotopy cooperad, and we will prove that the cobar construction of this homotopy cooperad is exactly the minimal model of the operad for Rota-Baxter system in Section~\ref{Section: minimal model RBSA}.  Thus this homotopy cooperad can be considered as the Koszul dual of the operad for Rota-Baxter system.

For basic knowledge about homotopy (co)operads,  we refer the reader to   \cite{Mar96, MV09a, MV09b, DP16}
or \cite[Section 3.1]{CGWZ24}.




Define a graded collection $\mathscr{S}(\RBS^\ac)$ by   $$\mathscr{S}(\RBS^\ac)(n)=\bfk u_{n}\oplus \bfk v_{n}\oplus \bfk w_{n}, \ \mathrm{with}\   |u_{n}|=0, |v_{n}|=|w_{n}|=1, $$ for $n\geqslant 1$. Now, we put a coaugmented homotopy cooperad structure on   $\mathscr{S}(\RBS^\ac)$. Firstly, consider  trees of arity $n\geqslant 1$ in the following list:
\begin{itemize}[keyvals]
		\item[(I)] Planar trees of weight two:     for each $1\leqslant j\leqslant n$ and   $1\leqslant i\leqslant n-j+1$, there exists such a tree, which can be visualized as
	\begin{eqnarray*}
		\begin{tikzpicture}[scale=0.8,descr/.style={fill=white}]
			\tikzstyle{every node}=[thick,minimum size=3pt, inner sep=1pt]

			\node(r) at (0,-0.5)[minimum size=0pt,circle]{};
			\node(v0) at (0,0)[fill=black, circle,label=right:{\tiny $ n-j+1$}]{};
			\node(v1-1) at (-1.5,1){\tiny$ 1 $};
			\node(v1-2) at(0,1)[fill=black,circle,label=right:{\tiny $\tiny j$}]{};
			\node(v1-3) at(1.5,1){\tiny$ n $};
			\node(v2-1)at (-1,2){\tiny$ i $};
			\node(v2-2) at(1,2){\tiny$ i+j-1 $};
			\draw(v0)--(v1-1);
			\draw(v0)--(v1-3);
			\draw(v1-2)--(v2-1);
			\draw(v1-2)--(v2-2);
			\draw[dotted](-0.4,1.5)--(0.4,1.5);
			\draw[dotted](-0.5,0.5)--(-0.1,0.5);
			\draw[dotted](0.1,0.5)--(0.5,0.5);
			\path[-,font=\scriptsize]
			(v0) edge node[descr]{{\tiny$i$}} (v1-2);
		\end{tikzpicture}
	\end{eqnarray*}
	
\item[(II)] Trees of weights $k+1\geqslant 3$ and of ``height" two: in these trees,  the first vertex in the planar order has arity $k$, the vertex connected to the $t$-th leaf of the first vertex has arity $r_t$ for each $1\leqslant t\leqslant k$ (so $2\leqslant k\leqslant n$ and $r_1+\cdots +r_k=n$); these trees have the following picture:
\begin{eqnarray*}
	\begin{tikzpicture}[scale=0.8,descr/.style={fill=white}]
		\tikzstyle{every node}=[thick,minimum size=3pt, inner sep=1pt]
		\node(v0) at (0,-1.5)[circle, fill=black,label=right:$k$]{};
		\node(v1) at(-1.2,-0.3)[circle, fill=black,label=right:$r_1$]{};
		\node(v1-1) at(-2,0.8){};
		\node(v1-2) at (-1,0.8){};
		\node(v3) at (1.2,-0.3)[circle, fill=black,label=right:$r_k$]{};
		\node(v3-1) at (1,0.8){};
		\node(v3-2) at (2,0.8){};
		\node(v2-1) at (-0.6,-0.3) [circle,fill=black,label=right:$r_2$]{};
		\node(v2-1-1) at (-0.8,0.8){};
		\node(v2-1-2) at (0.2, 0.8){};
		\draw [dotted,line width=1pt] (0.1,-0.5)--(0.6,-0.5);
		\draw [dotted,line width=1pt] (-0.5,0.5)--(-0.1,0.5);
		\draw [dotted,line width=1pt] (-1.6,0.5)--(-1.2, 0.5);
		\draw [dotted, line width=1pt] (1.2,0.5)--(1.6,0.5);
		\draw        (v0)--(v1);
		\draw         (v0)--(v3);
		\draw         (v1)--(v1-1);
		\draw          (v1)--(v1-2);
		\draw        (v2-1)--(v2-1-1);
		\draw        (v2-1)--(v2-1-2);
		\draw        (v3)--(v3-1);
		\draw        (v3)--(v3-2);
		\draw        (v0)--(v2-1);
	\end{tikzpicture}
\end{eqnarray*}
where $k\geqslant 2$, $r_t\geqslant 1$ for all $k\geqslant t\geqslant 1$.
\item[(III)]{  Trees of weights $q+1\geqslant 3$ and of ``height" three and there exists a unique vertex in the first two levels: in these trees, there are numbers $2\leqslant p\leqslant n$, $1\leqslant i\leqslant r_1$ and $r_j\geqslant 1$ for all $1\leqslant j\leqslant p$ such that the first vertex has arity $r_1$, the second vertex has arity $p$ and is connected to the $i$-th leaf of the first vertex and the other vertices connected to the $1$-th (resp. $2$-th, $\ldots$ , $(j-1)$-th, $(j+1)$-th, $\ldots$ , $p$-th) leaf of the second vertex and with arity $r_2$ (resp. $r_3, \dots, r_p$), so $r_1+\cdots+r_p=n$; these trees are drawn in this way:
\begin{eqnarray*}
	\begin{tikzpicture}[scale=0.8,descr/.style={fill=white}]
		\tikzstyle{every node}=[thick,minimum size=3pt, inner sep=1pt]
		\node(v0) at (0,0)[circle,fill=black,label=below:{\tiny $r_1$}]{};
		\node(v1-1) at (-2,1){};
		\node(v1-2) at(0,1.2)[circle,fill=black,label=right:{\tiny $p$}]{};
		\node(v1-3) at(2,1){};
		\node(v2-1) at(-1.5,2.8)[circle,fill=black,label=right:{\tiny $r_2$}]{};
		\node(v2-2) at (-0.6, 2.8)[circle,fill=black,label=right:{\tiny $r_{j}$}]{};
		\node(v2-3) at (0,2.9){\tiny $j$}{};
		\node(v2-4) at(1.5,2.8)[circle,fill=black,label=right:{\tiny $r_p$}]{};
		\node(v2-5) at(0.6,2.8)[circle,fill=black,label=right:{\tiny $r_{j+1}$}]{};
		\node(v3-5) at (-2,3.5){};
		\node(v3-6) at (-1.3,3.5){};
		\node(v3-1) at (-1,3.5){};
		\node(v3-2) at (-0.3,3.5){};
		\node(v3-3) at (1.3,3.5){};
		\node(v3-4) at(2,3.5){};
		\node(v3-7) at (1,3.5){};
		\node(v3-8) at (0.3,3.5){};
		\draw(v0)--(v1-1);
		\draw(v0)--(v1-3);
		\path[-,font=\scriptsize]
		(v0) edge node[descr]{{\tiny$i$}} (v1-2);
		\draw(v1-2)--(v2-1);
		\draw(v1-2)--(v2-3);
		\draw(v1-2)--(v2-2);
		\draw(v1-2)--(v2-4);
		\draw(v1-2)--(v2-5);
		\draw(v2-1)--(v3-5);
		\draw(v2-1)--(v3-6);
		\draw(v2-2)--(v3-1);
		\draw(v2-2)--(v3-2);
		\draw(v2-4)--(v3-3);
		\draw(v2-4)--(v3-4);
		\draw(v2-5)--(v3-7);
		\draw(v2-5)--(v3-8);
		\draw[dotted](-1,0.7)--(-0.1,0.7);
		\draw[dotted](0.1,0.7)--(1,0.7);
		\draw[dotted](0.6,2.4)--(1,2.4);
		\draw[dotted](-1,2.4)--(-0.6,2.4);
		\draw[dotted](-1.7,3.2)--(-1.4,3.2);
		\draw[dotted](-0.7,3.2)--(-0.4,3.2);
		\draw[dotted](1.7,3.2)--(1.4,3.2);
		\draw[dotted](0.7,3.2)--(0.4,3.2);
	\end{tikzpicture}
\end{eqnarray*}
}
\end{itemize}

Now, we define a family of operations $\{\Delta_T: \mathscr{S}(\RBS^\ac)\rightarrow \mathscr{S}({\RBS^\ac})^{\ot T}\}_{T\in \frakt}$ as follows:
\begin{itemize}
	\item[(1)] For a tree $T$ of type $\mathrm{(I)}$ with $1\leqslant j\leqslant n, 1\leqslant i\leqslant n-j+1$,   define $\Delta_T(u_n)=u_{n-j+1}\ot u_j$, which can be drawn as
	\begin{eqnarray*}
		\begin{tikzpicture}[scale=0.8,descr/.style={fill=white}]
			\tikzstyle{every node}=[thick,minimum size=5pt, inner sep=1pt]
			\node(r) at (0,-0.5)[minimum size=0pt,rectangle]{};
			\node(v-2) at(-1.5,0.5)[minimum size=0pt, label=left:{$\Delta_T(u_n)=$}]{};
			\node(v0) at (0,0)[draw,rectangle]{{\small $u_{n-j+1}$}};
			\node(v1-1) at (-1.5,1){};
			\node(v1-2) at(0,1.3)[draw,rectangle]{\small$u_j$};
			\node(v1-3) at(1.5,1){};
			\node(v2-1)at (-1,2.3){};
			\node(v2-2) at(1,2.3){};
			\draw(v0)--(v1-1);
			\draw(v0)--(v1-3);
			\draw(v1-2)--(v2-1);
			\draw(v1-2)--(v2-2);
			\draw[dotted](-0.4,1.8)--(0.4,1.8);
			\draw[dotted](-0.5,0.5)--(-0.1,0.5);
			\draw[dotted](0.1,0.5)--(0.5,0.5);
			\path[-,font=\scriptsize]
			(v0) edge node[descr]{{\tiny$i$}} (v1-2);
		\end{tikzpicture}
	\end{eqnarray*}
	
	Define
	$$\Delta_T(v_n)=\left\{\begin{array}{ll} v_n\ot u_1, & j=1,\\
		u_1\ot v_n, & j=n, \end{array}\right.$$
	which can be pictured as
	\begin{eqnarray*}
		\begin{tikzpicture}[scale=0.8,descr/.style={fill=white}]
			\tikzstyle{every node}=[thick,minimum size=5pt, inner sep=1pt]
			\node(r) at (0,-0.5)[minimum size=0pt,rectangle]{};
			\node(v-1) at(-1.5,0.5)[minimum size=0pt, label=left:{when $j=1$, $\Delta_T(v_n)=$}]{};
			\node(v0) at (0,0)[draw,rectangle]{\small$v_n$};
			\node(v1-1) at (-1.3,1){};
			\node(v1-2) at(0,1)[draw,rectangle]{\small$u_1$};
			\node(v1-3) at(1.3,1){};
			\node(v2-1)at (0,1.8){};
			\draw(v0)--(v1-1);
			\draw(v0)--(v1-3);
			\draw(v1-2)--(v2-1);
			\draw[dotted](-0.5,0.5)--(-0.1,0.5);
			\draw[dotted](0.1,0.5)--(0.5,0.5);
			\path[-,font=\scriptsize]
			(v0) edge node[descr]{{\tiny$i$}} (v1-2);
		\end{tikzpicture}
	\end{eqnarray*}

	\begin{eqnarray*}
		\begin{tikzpicture}[scale=0.8,descr/.style={fill=white}]
			\tikzstyle{every node}=[thick,minimum size=5pt, inner sep=1pt]
			\node(r) at (0,-0.5)[minimum size=0pt,rectangle]{};
			\node(va) at(-2.5,0.5)[minimum size=0pt, label=left:{when $j=n$, $\Delta_T(v_n)=$}]{};
			\node(ve) at (-1.5,0)[draw, rectangle]{\small $u_1$};
			\node(ve1) at (-1.5,1)[draw,rectangle]{\small $v_n$};
			\node(ve2-1) at(-2.5,2){};
			\node(ve2-2) at(-0.5,2){};
			\draw(ve)--(ve1);
			\draw(ve1)--(ve2-1);
			\draw(ve1)--(ve2-2);
			\draw[dotted](-1.1,1.5)--(-1.9,1.5);
		\end{tikzpicture}
	\end{eqnarray*}
	
	and
	$$\Delta_T(w_n)=\left\{\begin{array}{ll} w_n\ot u_1, & j=1,\\
	u_1\ot w_n, & j=n, \end{array}\right.$$
	which can be pictured as
	\begin{eqnarray*}
		\begin{tikzpicture}[scale=0.8,descr/.style={fill=white}]
			\tikzstyle{every node}=[thick,minimum size=5pt, inner sep=1pt]
			\node(r) at (0,-0.5)[minimum size=0pt,rectangle]{};
			\node(v-1) at(-1.5,0.5)[minimum size=0pt, label=left:{when $j=1$, $\Delta_T(w_n)=$}]{};
			\node(v0) at (0,0)[draw,rectangle]{\small$w_n$};
			\node(v1-1) at (-1.3,1){};
			\node(v1-2) at(0,1)[draw,rectangle]{\small$u_1$};
			\node(v1-3) at(1.3,1){};
			\node(v2-1)at (0,1.8){};
			\draw(v0)--(v1-1);
			\draw(v0)--(v1-3);
			\draw(v1-2)--(v2-1);
			\draw[dotted](-0.5,0.5)--(-0.1,0.5);
			\draw[dotted](0.1,0.5)--(0.5,0.5);
			\path[-,font=\scriptsize]
			(v0) edge node[descr]{{\tiny$i$}} (v1-2);
		\end{tikzpicture}
	\end{eqnarray*}

	\begin{eqnarray*}
		\begin{tikzpicture}[scale=0.8,descr/.style={fill=white}]
			\tikzstyle{every node}=[thick,minimum size=5pt, inner sep=1pt]
			\node(r) at (0,-0.5)[minimum size=0pt,rectangle]{};
			\node(va) at(-2.5,0.5)[minimum size=0pt, label=left:{when $j=n$, $\Delta_T(v_n)=$}]{};
			\node(ve) at (-1.5,0)[draw, rectangle]{\small $u_1$};
			\node(ve1) at (-1.5,1)[draw,rectangle]{\small $v_n$};
			\node(ve2-1) at(-2.5,2){};
			\node(ve2-2) at(-0.5,2){};
			\draw(ve)--(ve1);
			\draw(ve1)--(ve2-1);
			\draw(ve1)--(ve2-2);
			\draw[dotted](-1.1,1.5)--(-1.9,1.5);
		\end{tikzpicture}
	\end{eqnarray*}
	\item[(2)] For a tree $T$ of type $\mathrm{(II)}$ with  $2\leqslant k\leqslant n,   r_1+\cdots +r_k=n, r_1, \dots, r_k\geqslant 1$,  define
	\begin{eqnarray*}
		\begin{tikzpicture}[scale=0.8,descr/.style={fill=white}]
			\tikzstyle{every node}=[thick,minimum size=5pt, inner sep=1pt]
			\node(v-2) at (-3.5,-0.5)[minimum size=0pt, label=left:{$\Delta_T(v_n)=$}]{};
			\node(v-1) at(-3.5,-0.5)[minimum size=0pt,label=right:{$(-1)^{\frac{k(k-1)}{2}}$}]{};
			\node(v0) at (0,-1.5)[rectangle, draw]{\small $u_k$};
			\node(v1) at(-1.2,-0.3)[rectangle,draw]{\small $v_{r_1}$};
			\node(v1-1) at(-2,0.8){};
			\node(v1-2) at (-1,0.8){};
			\node(v3) at (1.2,-0.3)[rectangle, draw]{\small $v_{r_k}$};
			\node(v3-1) at (1,0.8){};
			\node(v3-2) at (2,0.8){};
			\node(v2-1) at (-0.5,-0.3) [rectangle,draw]{\small $v_{r_2}$};
			\node(v2-1-1) at (-0.8,0.8){};
			\node(v2-1-2) at (0.2, 0.8){};
			\draw [dotted,line width=1pt] (0.1,-0.5)--(0.5,-0.5);
			\draw [dotted,line width=1pt] (-0.5,0.5)--(-0.1,0.5);
			\draw [dotted,line width=1pt] (-1.6,0.5)--(-1.2, 0.5);
			\draw [dotted, line width=1pt] (1.2,0.5)--(1.6,0.5);
			\draw        (v0)--(v1);
			\draw         (v0)--(v3);
			\draw         (v0)--(v2-1);
			\draw         (v1)--(v1-1);
			\draw          (v1)--(v1-2);
			\draw        (v2-1)--(v2-1-1);
			\draw        (v2-1)--(v2-1-2);
			\draw        (v3)--(v3-1);
			\draw        (v3)--(v3-2);
		\end{tikzpicture}
	\end{eqnarray*}
and
\begin{eqnarray*}
	\begin{tikzpicture}[scale=0.8,descr/.style={fill=white}]
		\tikzstyle{every node}=[thick,minimum size=5pt, inner sep=1pt]
		\node(v-2) at (-3.5,-0.5)[minimum size=0pt, label=left:{$\Delta_T(w_n)=$}]{};
		\node(v-1) at(-3.6,-0.5)[minimum size=0pt,label=right:{$(-1)^{\frac{k(k-1)}{2}}$}]{};
		\node(v0) at (0,-1.5)[rectangle, draw]{\small $u_k$};
		\node(v1) at(-1.3,-0.3)[rectangle,draw]{\small $w_{r_1}$};
		\node(v1-1) at(-2,0.8){};
		\node(v1-2) at (-1,0.8){};
		\node(v3) at (1.2,-0.3)[rectangle, draw]{\small $w_{r_k}$};
		\node(v3-1) at (1,0.8){};
		\node(v3-2) at (2,0.8){};
		\node(v2-1) at (-0.5,-0.3) [rectangle,draw]{\small $w_{r_2}$};
		\node(v2-1-1) at (-0.8,0.8){};
		\node(v2-1-2) at (0.2, 0.8){};
		\draw [dotted,line width=1pt] (0.1,-0.5)--(0.5,-0.5);
		\draw [dotted,line width=1pt] (-0.5,0.5)--(-0.1,0.5);
		\draw [dotted,line width=1pt] (-1.6,0.5)--(-1.2, 0.5);
		\draw [dotted, line width=1pt] (1.2,0.5)--(1.6,0.5);
		\draw        (v0)--(v1);
		\draw         (v0)--(v3);
		\draw         (v0)--(v2-1);
		\draw         (v1)--(v1-1);
		\draw          (v1)--(v1-2);
		\draw        (v2-1)--(v2-1-1);
		\draw        (v2-1)--(v2-1-2);
		\draw        (v3)--(v3-1);
		\draw        (v3)--(v3-2);
	\end{tikzpicture}
\end{eqnarray*}
	\item[(3)] For a tree $T$ of type $\mathrm{(III)}$ with $2\leqslant q\leqslant p\leqslant n, 1\leqslant k_1<\dots<k_{q-1}\leqslant p, r_1+\cdots +r_q+p-q=n, 1\leqslant i\leqslant r_1, r_1, \dots, r_q\geqslant 1$, define
	\begin{eqnarray*}
		\begin{tikzpicture}[scale=0.8,descr/.style={fill=white}]
			\tikzstyle{every node}=[minimum size=4pt, inner sep=1pt]
			\node(v-2) at (-3.6,1.2)[minimum size=0pt, label=left:{$\Delta_T(v_n)=$}]{};
			\node(v-1) at(-3.6,1.3)[minimum size=0pt,label=right:{$(-1)^\frac{p(p-1)}{2}$}]{};

		\node(v0) at (0,0)[rectangle, draw]{\small $v_{r_1}$}{};
		\node(v1-1) at (-2,1){};
		\node(v1-2) at(0,1.2)[rectangle, draw]{\small $u_{p}$}{};
		\node(v1-3) at(2,1){};
		\node(v2-1) at(-1.5,2.8)[rectangle, draw]{\small $v_{r_2}$}{};
		\node(v2-2) at (-0.6, 2.8)[rectangle, draw]{\small $v_{r_j}$}{};
		\node(v2-3) at (0,2.9){\tiny $j$}{};
		\node(v2-4) at(1.7,2.8)[rectangle, draw]{\small $w_{r_p}$}{};
		\node(v2-5) at(0.7,2.8)[rectangle, draw]{\small $w_{r_{j+1}}$}{};
		\node(v3-5) at (-2,3.5){};
		\node(v3-6) at (-1.3,3.5){};
		\node(v3-1) at (-1,3.5){};
		\node(v3-2) at (-0.3,3.5){};
		\node(v3-3) at (1.3,3.5){};
		\node(v3-4) at(2,3.5){};
		\node(v3-7) at (1,3.5){};
		\node(v3-8) at (0.3,3.5){};
		\draw(v0)--(v1-1);
		\draw(v0)--(v1-3);
		\path[-,font=\scriptsize]
		(v0) edge node[descr]{{\tiny$i$}} (v1-2);
		\draw(v1-2)--(v2-1);
		\draw(v1-2)--(v2-3);
		\draw(v1-2)--(v2-2);
		\draw(v1-2)--(v2-4);
		\draw(v1-2)--(v2-5);
		\draw(v2-1)--(v3-5);
		\draw(v2-1)--(v3-6);
		\draw(v2-2)--(v3-1);
		\draw(v2-2)--(v3-2);
		\draw(v2-4)--(v3-3);
		\draw(v2-4)--(v3-4);
		\draw(v2-5)--(v3-7);
		\draw(v2-5)--(v3-8);
		\draw[dotted](-1,0.7)--(-0.1,0.7);
		\draw[dotted](0.1,0.7)--(1,0.7);
		\draw[dotted](0.6,2.4)--(1,2.4);
		\draw[dotted](-1,2.4)--(-0.6,2.4);
		\draw[dotted](-1.7,3.2)--(-1.4,3.2);
		\draw[dotted](-0.8,3.2)--(-0.5,3.2);
		\draw[dotted](1.8,3.2)--(1.5,3.2);
				\draw[dotted](0.8,3.2)--(0.5,3.2);
		\end{tikzpicture}
	\end{eqnarray*}
and
	\begin{eqnarray*}
	\begin{tikzpicture}[scale=0.8,descr/.style={fill=white}]
		\tikzstyle{every node}=[minimum size=4pt, inner sep=1pt]
		\node(v-2) at (-3.6,1.2)[minimum size=0pt, label=left:{$\Delta_T(w_n)=$}]{};
		\node(v-1) at(-3.6,1.3)[minimum size=0pt,label=right:{$(-1)^\frac{p(p-1)}{2}$}]{};

		\node(v0) at (0,0)[rectangle, draw]{\small $w_{r_1}$}{};
		\node(v1-1) at (-2,1){};
		\node(v1-2) at(0,1.2)[rectangle, draw]{\small $u_{p}$}{};
		\node(v1-3) at(2,1){};
		\node(v2-1) at(-1.5,2.8)[rectangle, draw]{\small $v_{r_2}$}{};
		\node(v2-2) at (-0.6, 2.8)[rectangle, draw]{\small $v_{r_j}$}{};
		\node(v2-3) at (0,2.9){\tiny $j$}{};
		\node(v2-4) at(1.7,2.8)[rectangle, draw]{\small $w_{r_p}$}{};
		\node(v2-5) at(0.7,2.8)[rectangle, draw]{\small $w_{r_{j+1}}$}{};
		\node(v3-5) at (-2,3.5){};
		\node(v3-6) at (-1.3,3.5){};
		\node(v3-1) at (-1,3.5){};
		\node(v3-2) at (-0.3,3.5){};
		\node(v3-3) at (1.3,3.5){};
		\node(v3-4) at(2,3.5){};
		\node(v3-7) at (1,3.5){};
		\node(v3-8) at (0.3,3.5){};
		\draw(v0)--(v1-1);
		\draw(v0)--(v1-3);
		\path[-,font=\scriptsize]
		(v0) edge node[descr]{{\tiny$i$}} (v1-2);
		\draw(v1-2)--(v2-1);
		\draw(v1-2)--(v2-3);
		\draw(v1-2)--(v2-2);
		\draw(v1-2)--(v2-4);
		\draw(v1-2)--(v2-5);
		\draw(v2-1)--(v3-5);
		\draw(v2-1)--(v3-6);
		\draw(v2-2)--(v3-1);
		\draw(v2-2)--(v3-2);
		\draw(v2-4)--(v3-3);
		\draw(v2-4)--(v3-4);
		\draw(v2-5)--(v3-7);
		\draw(v2-5)--(v3-8);
		\draw[dotted](-1,0.7)--(-0.1,0.7);
		\draw[dotted](0.1,0.7)--(1,0.7);
		\draw[dotted](0.6,2.4)--(1,2.4);
		\draw[dotted](-1,2.4)--(-0.6,2.4);
		\draw[dotted](-1.7,3.2)--(-1.4,3.2);
		\draw[dotted](-0.7,3.2)--(-0.4,3.2);
		\draw[dotted](1.8,3.2)--(1.5,3.2);
		\draw[dotted](0.8,3.2)--(0.5,3.2);
	\end{tikzpicture}
\end{eqnarray*}
	\item[(4)]All other components of $\Delta_T, T\in \frakt$ vanish.
\end{itemize}

\begin{prop}\label{Prop-Koszul-dual-coooperad}
	The graded collection $\mathscr{S}(\RBS^\ac)$ endowed with the  operations $\{\Delta_T\}_{T\in \frakt}$ introduced above forms a coaugmented homotopy cooperad, whose  strict counit is the natural projection $\varepsilon:\mathscr{S}(\RBS^\ac)\twoheadrightarrow \bfk u_1\cong \cali$ and the coaugmentation is just the natural embedding $\eta:\cali\cong \bfk u_1\hookrightarrow \mathscr{S}(\RBS^\ac)$. 	
\end{prop}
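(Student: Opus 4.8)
The plan is to verify the axioms of a coaugmented homotopy cooperad directly, packaging the computation through the cobar differential. Recall from \cite[Section 3.1]{CGWZ24} that endowing the graded collection $\mathscr{S}(\RBS^\ac)$ with a homotopy cooperad structure via a family $\{\Delta_T\}_{T\in\frakt}$ is equivalent to demanding that the associated degree $-1$ derivation $d$ on the free operad $\Omega\mathscr{S}(\RBS^\ac)=\mathcal{F}\big(s^{-1}\overline{\mathscr{S}(\RBS^\ac)}\big)$ (its cobar construction) satisfy $d^2=0$; here $\overline{\mathscr{S}(\RBS^\ac)}$ denotes the coaugmentation coideal, spanned by $v_n,w_n$ for all $n\geqslant 1$ and by $u_n$ for $n\geqslant 2$. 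Note that the degree bookkeeping is consistent: a tree $T$ with $m$ vertices forces $\Delta_T$ to have degree $m-2$, and indeed the type $\mathrm{(II)}$ and $\mathrm{(III)}$ operations supply exactly the extra odd generators $v,w$ needed to match $m-2$. Because $d$ is an odd derivation on a free operad, $d^2=\tfrac12[d,d]$ is again a derivation, hence determined by its values on generators. Thus it suffices to show the \emph{master equation} obtained by expanding $d^2$ on each of $s^{-1}u_n$, $s^{-1}v_n$, $s^{-1}w_n$ vanishes. Explicitly, by the operadic Leibniz rule $d^2(\xi)$ is a sum over a first decomposition $\Delta_{T_1}(\xi)$ followed by a further decomposition of one of the resulting vertices; one groups these two-step composites by the shape of the refined tree and checks that they cancel.

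I would first dispose of the coaugmentation and counit claims, which are immediate. The line $\bfk u_1$ is a copy of the operadic unit $\cali$; the type $\mathrm{(I)}$ operations sending $v_n\mapsto v_n\ot u_1,\,u_1\ot v_n$ and $w_n\mapsto w_n\ot u_1,\,u_1\ot w_n$ (and $u_n\mapsto u_{n-j+1}\ot u_j$ for $j=1,n$) are precisely the decompositions forced by strict counitality with respect to $\varepsilon:\mathscr{S}(\RBS^\ac)\twoheadrightarrow\bfk u_1\cong\cali$, and one reads off directly from the formulas that $\varepsilon$ is a strict counit and that $\eta:\cali\cong\bfk u_1\hookrightarrow\mathscr{S}(\RBS^\ac)$ is a coaugmentation compatible with all $\Delta_T$.

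The core of the proof is the master equation on generators. For $u_n$ only the type $\mathrm{(I)}$ operations $\Delta_T(u_n)=u_{n-j+1}\ot u_j$ contribute, and $d^2(u_n)=0$ is exactly the coassociativity of the associative (co)operad: the two ways of re-bracketing a string $u_a\ot u_b\ot u_c$ cancel. The genuinely new cases are $v_n$ and $w_n$, which are interchanged by the symmetry $v\leftrightarrow w$ of the construction, so I would treat $v_n$ and deduce $w_n$. Expanding $d^2(v_n)$ collects contributions of three kinds: (a) a type $\mathrm{(II)}$ decomposition followed by splitting its bottom vertex $u_k$ via type $\mathrm{(I)}$, or by further decomposing one top vertex $v_{r_t}$ via type $\mathrm{(II)}$ or $\mathrm{(III)}$; (b) a type $\mathrm{(III)}$ decomposition followed by a further type $\mathrm{(I)}$/$\mathrm{(II)}$/$\mathrm{(III)}$ step; and (c) the counital type $\mathrm{(I)}$ terms. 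Organizing these by the combinatorial type of the three-level refined tree, each grouped family is indexed by a pair of compositions of $n$ together with a distinguished leaf, and I would show that within each family the splitting of a $u$ in a type $\mathrm{(II)}$ term cancels against the matching type $\mathrm{(III)}$ term together with re-indexed type $\mathrm{(II)}$ terms.

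The main obstacle is the sign bookkeeping. There are three independent sources of signs: the Koszul signs produced by moving the desuspension $s^{-1}$ past the generators (before desuspension $v_n,w_n$ are odd while $u_n$ is even), the permutation and shuffle signs incurred when the decomposition of a tensor of graded objects is rewritten in planar operadic order, and the explicit triangular prefactors $(-1)^{k(k-1)/2}$ and $(-1)^{p(p-1)/2}$ attached to the type $\mathrm{(II)}$ and type $\mathrm{(III)}$ operations. Verifying that these three contributions combine so that the $v_n$ (and hence $w_n$) master equation cancels termwise is the heart of the argument; I would handle it by fixing once and for all a vertex-ordering convention for each tree, computing the total Koszul sign of every two-step decomposition relative to that convention, and then confirming cancellation family by family. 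Once the signs are pinned down, the remaining combinatorial matching of terms is routine.
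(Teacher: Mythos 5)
Your proposal follows essentially the same route as the paper's Appendix~\ref{Appendix: Koszul dual homotopy cooperad}: both reduce the statement to showing that the induced derivation on the cobar construction $\Omega\big(\mathscr{S}(\RBS^\ac)\big)$ squares to zero on the generators $s^{-1}u_n$, $s^{-1}v_n$, $s^{-1}w_n$, with the $u_n$ case being operadic coassociativity and the $w_n$ case following from the $v_n$ case by symmetry. The only difference is one of packaging: where you propose to cancel the two-step decompositions ``family by family'' by hand, the paper writes the differential in terms of brace operations and obtains the cancellations from the pre-Jacobi identity, which automates most of the sign bookkeeping you flag as the main obstacle.
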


We postpone the proof of this result to Appendix~\ref{Appendix: Koszul dual homotopy cooperad} in order not to interrupt the flux of the presentation.

We will justify the following  definition by showing its cobar construction is exactly the minimal model of  $\RBS$ in Section~\ref{Section: minimal model RBSA}, hence the name ``Koszul dual homotopy cooperad".
\begin{defn}
	The homotopy cooperad $\mathscr{S}(\RBS^\ac)\ot_{\mathrm{H}} \cals^{-1}$, denoted by $\RBS^\ac$, is called the Koszul dual homotopy cooperad of $\RBS$.
\end{defn}

Precisely, the underlying graded collection of ${\RBS^\ac}$ is $${\RBS^\ac}(n)=\bfk e_n\oplus \bfk o^{R}_n\oplus \bfk o^S_n, n\geqslant 1$$ with $e_n=u_n\ot \varepsilon_n$, $o^R_n=v_n\ot \varepsilon_n$  and $o^S_n=w_n\ot \varepsilon_n$, thus $|e_n|=n-1$ and $|o_n|=n$.
The   defining operations $\{\Delta_T\}_{T\in \frakt}$   are given by the following formulae:
\begin{itemize}
	\item[(1)] For a tree $T$ of type $\mathrm{(I)}$ with $1\leqslant j\leqslant n, 1\leqslant i\leqslant n-j+1$,
	\begin{eqnarray*}
		\begin{tikzpicture}[scale=0.8,descr/.style={fill=white}]
			\tikzstyle{every node}=[thick,minimum size=4pt, inner sep=1pt]
			\node(r) at (0,-0.5)[minimum size=0pt,rectangle]{};
			\node(v-2) at(-1.4,0.5)[minimum size=0pt, label=left:{$\Delta_T(e_n)=(-1)^{(j-1)(n-i+1)}$}]{};
			\node(v0) at (0,0)[draw,rectangle]{{\small $e_{n-j+1}$}};
			\node(v1-1) at (-1.5,1){};
			\node(v1-2) at(0,1)[draw,rectangle]{\small$e_j$};
			\node(v1-3) at(1.5,1){};
			\node(v2-1)at (-1,2){};
			\node(v2-2) at(1,2){};
			\draw(v0)--(v1-1);
			\draw(v0)--(v1-3);
			\draw(v1-2)--(v2-1);
			\draw(v1-2)--(v2-2);
			\draw[dotted](-0.4,1.5)--(0.4,1.5);
			\draw[dotted](-0.5,0.5)--(-0.1,0.5);
			\draw[dotted](0.1,0.5)--(0.5,0.5);
			\path[-,font=\scriptsize]
			(v0) edge node[descr]{{\tiny$i$}} (v1-2);
		\end{tikzpicture}
	\end{eqnarray*}
 For $\Delta_T(o^R_n)$,  $j=1$,
	\begin{eqnarray*}
		\begin{tikzpicture}[scale=0.8,descr/.style={fill=white}]
			\tikzstyle{every node}=[thick,minimum size=4pt, inner sep=1pt]
			\node(r) at (0,-0.5)[minimum size=0pt,rectangle]{};
			\node(v-1) at(-1.3,0.5)[minimum size=0pt, label=left:{$\Delta_T(o^R_n)=$}]{};
			\node(v0) at (0,0)[draw,rectangle]{\small$o^R_n$};
			\node(v1-1) at (-1.3,1){};
			\node(v1-2) at(0,1.1)[draw,rectangle]{\small$e_1$};
			\node(v1-3) at(1.3,1){};
			\node(v2-1)at (0,1.8){};
			\draw(v0)--(v1-1);
			\draw(v0)--(v1-3);
			\draw(v1-2)--(v2-1);
			\draw[dotted](-0.5,0.5)--(-0.1,0.5);
			\draw[dotted](0.1,0.5)--(0.5,0.5);
			\path[-,font=\scriptsize]
			(v0) edge node[descr]{{\tiny$i$}} (v1-2);
		\end{tikzpicture}
	\end{eqnarray*}
 for  $j=n$,
		\begin{eqnarray*}
		\begin{tikzpicture}[scale=0.75,descr/.style={fill=white}]
			\tikzstyle{every node}=[thick,minimum size=5pt, inner sep=1pt]
			\node(r) at (0,-0.5)[minimum size=0pt,rectangle]{};
			\node(va) at(-2.4,0.5)[minimum size=0pt, label=left:{$\Delta_T(o^R_n)=$}]{};
			\node(ve) at (-1.5,0)[draw, rectangle]{\small $u_1$};
			\node(ve1) at (-1.5,1)[draw,rectangle]{\small $o^R_n$};
			\node(ve2-1) at(-2.5,2){};
			\node(ve2-2) at(-0.5,2){};
			\draw(ve)--(ve1);
			\draw(ve1)--(ve2-1);
			\draw(ve1)--(ve2-2);
			\draw[dotted](-1.1,1.6)--(-1.9,1.6);
		\end{tikzpicture}
	\end{eqnarray*} For $\Delta_T(o^S_n)$,  $j=1$,
\begin{eqnarray*}
\begin{tikzpicture}[scale=0.8,descr/.style={fill=white}]
	\tikzstyle{every node}=[thick,minimum size=5pt, inner sep=1pt]
	\node(r) at (0,-0.5)[minimum size=0pt,rectangle]{};
	\node(v-1) at(-1.5,0.5)[minimum size=0pt, label=left:{$\Delta_T(o^S_n)=$}]{};
	\node(v0) at (0,0)[draw,rectangle]{\small$o^S_n$};
	\node(v1-1) at (-1.3,1){};
	\node(v1-2) at(0,1.1)[draw,rectangle]{\small$e_1$};
	\node(v1-3) at(1.3,1){};
	\node(v2-1)at (0,1.8){};
	\draw(v0)--(v1-1);
	\draw(v0)--(v1-3);
	\draw(v1-2)--(v2-1);
	\draw[dotted](-0.5,0.5)--(-0.1,0.5);
	\draw[dotted](0.1,0.5)--(0.5,0.5);
	\path[-,font=\scriptsize]
	(v0) edge node[descr]{{\tiny$i$}} (v1-2);
\end{tikzpicture}
\end{eqnarray*}
for  $j=n$,
\begin{eqnarray*}
\begin{tikzpicture}[scale=0.8,descr/.style={fill=white}]
	\tikzstyle{every node}=[thick,minimum size=5pt, inner sep=1pt]
	\node(r) at (0,-0.5)[minimum size=0pt,rectangle]{};
	\node(va) at(-2.5,0.5)[minimum size=0pt, label=left:{$\Delta_T(o^S_n)=$}]{};
	\node(ve) at (-1.5,0)[draw, rectangle]{\small $u_1$};
	\node(ve1) at (-1.5,1)[draw,rectangle]{\small $o^S_n$};
	\node(ve2-1) at(-2.5,2){};
	\node(ve2-2) at(-0.5,2){};
	\draw(ve)--(ve1);
	\draw(ve1)--(ve2-1);
	\draw(ve1)--(ve2-2);
	\draw[dotted](-1.1,1.5)--(-1.9,1.5);
\end{tikzpicture}
\end{eqnarray*}
	
	\item[(2)]  For a tree $T$ of type $\mathrm{(II)}$ with  $2\leqslant k\leqslant n,   r_1+\cdots +r_k=n, r_1, \dots, r_k\geqslant 1$,
	\begin{eqnarray*}
		\begin{tikzpicture}[scale=0.8,descr/.style={fill=white}]
			\tikzstyle{every node}=[thick,minimum size=5pt, inner sep=1pt]
			\node(v-2) at (-3.5,-0.5)[minimum size=0pt, label=left:{$\Delta_T(o^R_n)=$}]{};
			\node(v-1) at(-3.5,-0.4)[minimum size=0pt,label=right:{$(-1)^{\frac{k(k-1)}{2}}$}]{};
			\node(v0) at (0,-1.5)[rectangle, draw]{\small $e_k$};
			\node(v1) at(-1.2,-0.3)[rectangle,draw]{\small $o^R_{r_1}$};
			\node(v1-1) at(-2,0.8){};
			\node(v1-2) at (-1,0.8){};
			\node(v3) at (1.2,-0.3)[rectangle, draw]{\small $o^R_{r_k}$};
			\node(v3-1) at (1,0.8){};
			\node(v3-2) at (2,0.8){};
			\node(v2-1) at (-0.4,-0.3) [rectangle,draw]{\small $o^R_{r_2}$};
			\node(v2-1-1) at (-0.8,0.8){};
			\node(v2-1-2) at (0.2, 0.8){};
			\draw [dotted,line width=1pt] (0.1,-0.5)--(0.6,-0.5);
			\draw [dotted,line width=1pt] (-0.5,0.5)--(-0.1,0.5);
			\draw [dotted,line width=1pt] (-1.6,0.5)--(-1.2, 0.5);
			\draw [dotted, line width=1pt] (1.2,0.5)--(1.6,0.5);
			\draw        (v0)--(v1);
			\draw         (v0)--(v3);
			\draw         (v0)--(v2-1);
			\draw         (v1)--(v1-1);
			\draw          (v1)--(v1-2);
			\draw        (v2-1)--(v2-1-1);
			\draw        (v2-1)--(v2-1-2);
			\draw        (v3)--(v3-1);
			\draw        (v3)--(v3-2);
		\end{tikzpicture}
	\end{eqnarray*} and	\begin{eqnarray*}
	\begin{tikzpicture}[scale=0.8,descr/.style={fill=white}]
		\tikzstyle{every node}=[thick,minimum size=5pt, inner sep=1pt]
		\node(v-2) at (-3.5,-0.5)[minimum size=0pt, label=left:{$\Delta_T(o^S_n)=$}]{};
		\node(v-1) at(-3.5,-0.4)[minimum size=0pt,label=right:{$(-1)^{\frac{k(k-1)}{2}}$}]{};
		\node(v0) at (0,-1.5)[rectangle, draw]{\small $e_k$};
		\node(v1) at(-1.2,-0.3)[rectangle,draw]{\small $o^S_{r_1}$};
		\node(v1-1) at(-2,0.8){};
		\node(v1-2) at (-1,0.8){};
		\node(v3) at (1.2,-0.3)[rectangle, draw]{\small $o^S_{r_k}$};
		\node(v3-1) at (1,0.8){};
		\node(v3-2) at (2,0.8){};
		\node(v2-1) at (-0.4,-0.3) [rectangle,draw]{\small $o^S_{r_2}$};
		\node(v2-1-1) at (-0.8,0.8){};
		\node(v2-1-2) at (0.2, 0.8){};
		\draw [dotted,line width=1pt] (0.1,-0.5)--(0.6,-0.5);
		\draw [dotted,line width=1pt] (-0.5,0.5)--(-0.1,0.5);
		\draw [dotted,line width=1pt] (-1.6,0.5)--(-1.2, 0.5);
		\draw [dotted, line width=1pt] (1.2,0.5)--(1.6,0.5);
		\draw        (v0)--(v1);
		\draw         (v0)--(v3);
		\draw         (v0)--(v2-1);
		\draw         (v1)--(v1-1);
		\draw          (v1)--(v1-2);
		\draw        (v2-1)--(v2-1-1);
		\draw        (v2-1)--(v2-1-2);
		\draw        (v3)--(v3-1);
		\draw        (v3)--(v3-2);
	\end{tikzpicture}
\end{eqnarray*}
	\item[(3)]  For a tree $T$ of type $\mathrm{(III)}$ with $ 1\leqslant j\leqslant p, r_1+\cdots +r_p=n, 1\leqslant i\leqslant r_1, r_1, \dots, r_p\geqslant 1$,
\begin{eqnarray*}
	\begin{tikzpicture}[scale=0.8,descr/.style={fill=white}]
		\tikzstyle{every node}=[minimum size=4pt, inner sep=1pt]
		\node(v-2) at (-3.5,1.2)[minimum size=0pt, label=left:{$\Delta_T(o^R_n)=$}]{};
		\node(v-1) at(-3.5,1.2)[minimum size=0pt,label=right:{$(-1)^{\gamma}$}]{};
		
		\node(v0) at (0,0)[rectangle, draw]{\small $o^R_{r_1}$}{};
		\node(v1-1) at (-2,1){};
		\node(v1-2) at(0,1.2)[rectangle, draw]{\small $e_{p}$}{};
		\node(v1-3) at(2,1){};
		\node(v2-1) at(-1.5,2.8)[rectangle, draw]{\small $o^R_{r_2}$}{};
		\node(v2-2) at (-0.6, 2.8)[rectangle, draw]{\small $o^R_{r_j}$}{};
		\node(v2-3) at (0,2.9){\tiny $j$}{};
		\node(v2-4) at(1.6,2.8)[rectangle, draw]{\small $o^S_{r_p}$}{};
		\node(v2-5) at(0.7,2.8)[rectangle, draw]{\small $o^S_{r_{j+1}}$}{};
		\node(v3-5) at (-2.1,3.6){};
		\node(v3-6) at (-1.3,3.6){};
		\node(v3-1) at (-1,3.6){};
		\node(v3-2) at (-0.3,3.6){};
		\node(v3-3) at (1.3,3.6){};
		\node(v3-4) at(2,3.6){};
		\node(v3-7) at (1,3.6){};
		\node(v3-8) at (0.3,3.6){};
		\draw(v0)--(v1-1);
		\draw(v0)--(v1-3);
		\path[-,font=\scriptsize]
		(v0) edge node[descr]{{\tiny$i$}} (v1-2);
		\draw(v1-2)--(v2-1);
		\draw(v1-2)--(v2-3);
		\draw(v1-2)--(v2-2);
		\draw(v1-2)--(v2-4);
		\draw(v1-2)--(v2-5);
		\draw(v2-1)--(v3-5);
		\draw(v2-1)--(v3-6);
		\draw(v2-2)--(v3-1);
		\draw(v2-2)--(v3-2);
		\draw(v2-4)--(v3-3);
		\draw(v2-4)--(v3-4);
		\draw(v2-5)--(v3-7);
		\draw(v2-5)--(v3-8);
		\draw[dotted,line width=0.9pt](-1,0.7)--(-0.1,0.7);
		\draw[dotted,line width=0.9pt](0.1,0.7)--(1,0.7);
		\draw[dotted,line width=0.8pt](0.5,2.2)--(0.9,2.2);
		\draw[dotted,line width=0.8pt](-0.85,2.2)--(-0.45,2.2);
		\draw[dotted,line width=0.7pt](-1.75,3.3)--(-1.45,3.3);
		\draw[dotted,line width=0.7pt](-0.75,3.3)--(-0.45,3.3);
		\draw[dotted,line width=0.7pt](1.75,3.3)--(1.45,3.3);
		\draw[dotted,line width=0.7pt](0.8,3.3)--(0.5,3.3);
	\end{tikzpicture}
\end{eqnarray*}
and
\begin{eqnarray*}
	\begin{tikzpicture}[scale=0.8,descr/.style={fill=white}]
		\tikzstyle{every node}=[minimum size=4pt, inner sep=1pt]
		\node(v-2) at (-3.5,1.2)[minimum size=0pt, label=left:{$\Delta_T(o^S_n)=$}]{};
		\node(v-1) at(-3.5,1.2)[minimum size=0pt,label=right:{$(-1)^{\gamma}$}]{};
		
		\node(v0) at (0,0)[rectangle, draw]{\small $o^S_{r_1}$}{};
		\node(v1-1) at (-2,1){};
		\node(v1-2) at(0,1.2)[rectangle, draw]{\small $e_{p}$}{};
		\node(v1-3) at(2,1){};
		\node(v2-1) at(-1.5,2.8)[rectangle, draw]{\small $o^R_{r_2}$}{};
		\node(v2-2) at (-0.6, 2.8)[rectangle, draw]{\small $o^R_{r_j}$}{};
		\node(v2-3) at (0,2.9){\tiny $j$}{};
		\node(v2-4) at(1.6,2.8)[rectangle, draw]{\small $o^S_{r_p}$}{};
		\node(v2-5) at(0.7,2.8)[rectangle, draw]{\small $o^S_ {r_{j+1}}$}{};
		\node(v3-5) at (-2,3.6){};
		\node(v3-6) at (-1.3,3.6){};
		\node(v3-1) at (-1,3.6){};
		\node(v3-2) at (-0.3,3.6){};
		\node(v3-3) at (1.3,3.6){};
		\node(v3-4) at(2,3.6){};
		\node(v3-7) at (1,3.6){};
		\node(v3-8) at (0.3,3.6){};
		\draw(v0)--(v1-1);
		\draw(v0)--(v1-3);
		\path[-,font=\scriptsize]
		(v0) edge node[descr]{{\tiny$i$}} (v1-2);
		\draw(v1-2)--(v2-1);
		\draw(v1-2)--(v2-3);
		\draw(v1-2)--(v2-2);
		\draw(v1-2)--(v2-4);
		\draw(v1-2)--(v2-5);
		\draw(v2-1)--(v3-5);
		\draw(v2-1)--(v3-6);
		\draw(v2-2)--(v3-1);
		\draw(v2-2)--(v3-2);
		\draw(v2-4)--(v3-3);
		\draw(v2-4)--(v3-4);
		\draw(v2-5)--(v3-7);
		\draw(v2-5)--(v3-8);
		\draw[dotted,line width=0.9pt](-1,0.7)--(-0.1,0.7);
		\draw[dotted,line width=0.9pt](0.1,0.7)--(1,0.7);
		\draw[dotted,line width=0.8pt](0.5,2.2)--(0.9,2.2);
		\draw[dotted,line width=0.8pt](-0.85,2.2)--(-0.45,2.2);
		\draw[dotted,line width=0.7pt](-1.7,3.3)--(-1.4,3.3);
		\draw[dotted,line width=0.7pt](-0.75,3.3)--(-0.45,3.3);
		\draw[dotted,line width=0.7pt](1.75,3.3)--(1.45,3.3);
		\draw[dotted,line width=0.7pt](0.8,3.3)--(0.5,3.3);
	\end{tikzpicture}
\end{eqnarray*}
	where \begin{eqnarray*}
		\gamma=\sum_{j=1}^{p-1}(q-k)r_k+p-1+(\sum_{k=2}^pr_k)(r_1-i)+\sum_{k=2}^j(r_k-1)+\sum_{k=2}^p(r_k-1)(p-k).	\end{eqnarray*}
	\item[(4)]All other components of $\Delta_T, T\in \frakt$ vanish.
\end{itemize}

\bigskip

\section{The Minimal model for the operad of Rota-Baxter systems} \label{Section: minimal model RBSA}
 In this section,  we will prove that the cobar construction of   the homotopy cooperad $\RBS^\ac$ introduced in Section~\ref{Section: Koszul dual cooperad}  is the minimal model of the operad for Rota-Baxter systems.



For a collection $M=\{M(n)\}_{n\geqslant 1} $ of (graded) vector spaces, denote by $ \mathcal{F}(M)$ the free (graded) operad generated by $M$. Recall that a dg operad is called quasi-free if its underlying graded operad is free.

\begin{defn}\cite{DCV13}\label{Def: Minimal model of operads} A minimal model for an operad $\mathcal{P}$  is a quasi-free dg operad $ (\mathcal{F}(M),d)$ together with a surjective quasi-isomorphism of operads $(\mathcal{F}(M), \partial)\overset{\sim}{\twoheadrightarrow} \mathcal{P}$, where the dg operad $(\mathcal{F}(M),  \partial)$  satisfies the following conditions:
	\begin{itemize}
		\item[(i)] the differential $\partial$ is decomposable, i.e.,  $\partial$ takes $M$ to $\mathcal{F}(M)^{\geqslant 2}$, the subspace of $\mathcal{F}(M)$ consisting of elements with weight $\geqslant 2$;
		\item[(ii)] the generating collection $M$ admits a decomposition $M=\bigoplus\limits_{i\geqslant 1}M_{(i)}$  such that $\partial(M_{(k+1)})\subset \mathcal{F}\Big(\bigoplus\limits_{i=1}^kM_{(i)}\Big)$ for any $k\geqslant 1$.  \end{itemize}
\end{defn}
\begin{thm}\cite{DCV13} When an operad $\mathcal{P}$ admits a minimal model, it is unique up to isomorphisms.
\end{thm}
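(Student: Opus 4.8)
The statement is a general fact about minimal models, so the plan is to prove it for an arbitrary operad $\mathcal{P}$ admitting two minimal models $(\mathcal{F}(M),\partial_M)\overset{\sim}{\twoheadrightarrow}\mathcal{P}$ and $(\mathcal{F}(N),\partial_N)\overset{\sim}{\twoheadrightarrow}\mathcal{P}$. I would reduce the uniqueness to two independent assertions: first, that there exists a morphism of dg operads $\Phi\colon(\mathcal{F}(M),\partial_M)\to(\mathcal{F}(N),\partial_N)$ compatible with the two augmentations to $\mathcal{P}$; and second, that any such $\Phi$, being automatically a quasi-isomorphism, is in fact an isomorphism. The second assertion is the heart of the matter and is where the defining properties (i) and (ii) of a minimal model are genuinely used.

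For the first assertion I would invoke the cofibrantly generated model category structure on dg operads (Hinich), in which the weak equivalences are the quasi-isomorphisms and the fibrations are the arity- and degreewise surjections. Condition (ii) exhibits $(\mathcal{F}(M),\partial_M)$ as a cellular, hence cofibrant, object: the filtration $M=\bigoplus_{i\geqslant 1}M_{(i)}$ together with $\partial_M(M_{(k+1)})\subset\mathcal{F}(\bigoplus_{i\leqslant k}M_{(i)})$ presents it as a sequential colimit of free extensions. The augmentation $(\mathcal{F}(N),\partial_N)\twoheadrightarrow\mathcal{P}$ is a surjective quasi-isomorphism, i.e. an acyclic fibration. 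Lifting the augmentation of $(\mathcal{F}(M),\partial_M)$ against this acyclic fibration produces $\Phi$ over $\mathcal{P}$, and the two-out-of-three property then forces $\Phi$ to be a quasi-isomorphism.

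The main obstacle is the rigidity statement: a quasi-isomorphism $\Phi$ between minimal dg operads is an isomorphism. I would decompose $\Phi$ along the weight grading of the free operads and isolate its linear part $\Phi^{(1)}\colon M\to N$. Because both differentials are decomposable by condition (i), the differential induced on the spaces of generators vanishes, so $\Phi^{(1)}$ is a morphism of graded collections carrying the zero differential, and the problem becomes showing that $\Phi^{(1)}$ is an isomorphism. For this I would identify the generating collections $M$ and $N$, up to the standard degree shift, with an intrinsic homological invariant of $\mathcal{P}$---the homology of the operadic cotangent complex (operadic Andr\'e--Quillen homology), which for a minimal model is computed by its indecomposable quotient---and check that $\Phi$ induces precisely $\Phi^{(1)}$ on this invariant. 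Since $\Phi$ is a quasi-isomorphism, the induced map is an isomorphism, whence $\Phi^{(1)}$ is bijective. Finally, once the linear part is invertible, the weight-filtered, ``unitriangular'' structure of $\Phi$ allows me to invert it weight by weight, producing a two-sided inverse morphism of dg operads and completing the proof. The delicate point throughout is the passage from ``quasi-isomorphism of the total complexes'' to ``isomorphism on generators'': this step genuinely requires minimality and fails for non-minimal cofibrant resolutions, which is exactly why the conclusion holds for minimal models but not for arbitrary resolutions.
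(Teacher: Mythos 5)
The paper does not actually prove this statement: it is quoted verbatim from \cite{DCV13} as a known result, so there is no in-paper argument to compare yours against. Judged on its own, your proposal is the standard proof from the literature (Markl's ``Models for operads'' and the appendix of \cite{DCV13}) and is essentially correct: condition (ii) of Definition~\ref{Def: Minimal model of operads} makes a minimal model a triangulated quasi-free, hence cofibrant, dg operad; lifting one augmentation against the other (an acyclic fibration, since the map to $\mathcal{P}$ is a surjective quasi-isomorphism) produces a comparison map $\Phi$ over $\mathcal{P}$, which is a quasi-isomorphism by two-out-of-three; and condition (i) (decomposability of the differential) is exactly what upgrades the quasi-isomorphism to an isomorphism. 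The one step you should make explicit is why a quasi-isomorphism between cofibrant quasi-free operads induces a quasi-isomorphism on indecomposables: this follows because the indecomposables functor is left Quillen (left adjoint to the square-zero extension functor), so it preserves weak equivalences between cofibrant objects by Ken Brown's lemma; since minimality forces the induced differential on generators to vanish, the linear part $\Phi^{(1)}\colon M\to N$ is then an honest isomorphism of collections, and the weight-by-weight inversion you describe finishes the argument. Your identification of the generators with operadic Quillen homology is also consistent with how the paper itself uses this fact later, in the corollary identifying $\mathrm{H}_\bullet(\mathrm{B}(\RBS))$ with $\RBS^{\ac}$.
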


The  operad for Rota-Baxter systems, denoted by $\RBS$,  is generated by two unary operators $R, S$ and a binary operator $\mu$ with the operadic relations generated by
\[ \mu \circ_{1} \mu   - \mu \circ_{2} \mu ,\]

$$ (\mu\circ_1 R)\circ_2 R-(R \circ_1\mu)\circ_1 R-(R \circ_1\mu)\circ_2 S$$
and
$$ (\mu\circ_1 S)\circ_2 S-(S \circ_1\mu)\circ_1 R-(S \circ_1\mu)\circ_2 S.$$

\begin{defn}
The differential graded operad $\Omega (\RBS^\ac)$, denoted by $\RBSinfty$,  is called the operad of homotopy Rota-Baxter   systems.
\end{defn}

Let us describe this dg operad explicitly.
The  dg operad $\RBS_\infty$ is the free operad generated by the graded collection $s^{-1}\overline{{\RBS^\ac}}$ endowed with the differential induced from the homotopy cooperad structure.
More precisely, $$s^{-1}\overline{{\RBS^\ac}}(1)=\bfk s^{-1}o^R_1 \oplus \bfk s^{-1}o^S_1\ \mathrm{and}\ s^{-1}\overline{{\RBS^\ac}}(n)=\bfk s^{-1}e_n\oplus \bfk s^{-1}o^R_n\oplus \bfk s^{-1}o^S_n , n\geqslant2.$$
Denote $m_n=s^{-1}e_n, n\geqslant 2$ and $R_n=s^{-1}o^R_n$, $S_n=s^{-1}o^S_n, n\geqslant 1$  respectively, so  $|m_n|=n-2, |R_n|=n-1, |S_n|=n-1$. The action of the  differential on these generators in $\RBS_\infty$ is given by the following formulae:
	\begin{eqnarray}\label{Eq: defining HRB 1} \forall n\geqslant 2,\quad
		\partial{m_n} = \sum_{j=2}^{n-1}\sum_{i=1}^{n-j+1}(-1)^{i+j(n-i)}m_{n-j+1}\circ_i m_j,
	\end{eqnarray}
	\begin{align} \label{Eq: defining HRB 2}
		\forall n\geq1&\\
	    \notag \partial R_n=& \sum\limits_{k=2}^n\sum_{l_1+\cdots+l_k=n\atop l_1, \dots, l_k\geqslant 1}(-1)^{\alpha}\Big(\cdots\big((m_k\circ_1 R_{l_1})\circ_{l_1+1}R_{l_2}\big)\cdots\Big)\circ_{l_1+\cdots+l_{k-1}+1}T_{l_k}  +\\
		\notag   &  \sum\limits_{{\small\substack{2\leqslant p\leqslant n \\ 1\leqslant j\leqslant p
		}}}\sum\limits_{\small\substack{ r_1+\dots+r_p=n\\r_1, \dots, r_p\geqslant 1\\1\leqslant i\leqslant r_1}}(-1)^{\beta}\Big(R_{r_1}\circ_i \Big(\Big(\big((\cdots( m_p\circ_{1}R_{r_2}))\cdots \big)\circ_{r_2+\dots+r_{j-1}+1}R_{r_j}\Big)\circ_{r_2+\dots+r_{j-1}+2}S_{r_{j+1}}\Big)\circ_{r_2+\dots+r_{p}+2}S_{r_p}\Big) ,
	\end{align}
and
\begin{align} \label{Eq: defining HRB 3}
	\forall n\geq1&\\
   \notag  \partial S_n=& \sum\limits_{k=2}^n\sum_{l_1+\cdots+l_k=n\atop l_1, \dots, l_k\geqslant 1}(-1)^{\alpha}\Big(\cdots\big((m_k\circ_1 S_{l_1})\circ_{l_1+1}T_{l_2}\big)\cdots\Big)\circ_{l_1+\cdots+l_{k-1}+1}S_{l_k} +\\
\notag   &  \sum\limits_{{\small\substack{2\leqslant p\leqslant n \\ 1\leqslant j\leqslant p
}}}\sum\limits_{\small\substack{ r_1+\dots+r_p=n\\r_1, \dots, r_p\geqslant 1\\1\leqslant i\leqslant r_1}}(-1)^{\beta}\Big(S_{r_1}\circ_i \Big(\Big(\big((\cdots( m_p\circ_{1}R_{r_2}))\cdots \big)\circ_{r_2+\dots+r_{j-1}+1}R_{r_j}\Big)\circ_{r_2+\dots+r_{j-1}+2}S_{r_{j+1}}\Big)\circ_{r_2+\dots+r_{p}+2}S_{r_p}\Big),
\end{align}
where the signs $(-1)^{\alpha}$ and $(-1)^{\beta}$ are   respectively
\begin{eqnarray*}\label{Eq: sign   alpha'}
	\alpha&=&1+\frac{k(k-1)}{2}+\sum_{j=1}^k(k-j)l_j=1+\sum_{j=1}^k(k-j)(l_j-1),\\
	\label{Eq: sign   beta'}\beta &=& 1+i+\big(p+\sum\limits_{j=2}^p(r_j-1)\big)\big(r_1-i\big)+\sum_{k=2}^j(r_k-1)+\sum_{k=2}^p(r_k-1)(p-k).	\end{eqnarray*}

Let's display the elements in the dg operad $\RBSinfty$ using labeled planar rooted trees. The generator $m_n$ for $n \geqslant 2$ is represented  by the corolla with $n$ leaves and a black vertex, and the generator $R_n$ (resp. $S_n$), $n \geqslant 1$  is represented by a corolla with $n$ leaves and a white (resp. red) vertex:
\begin{eqnarray*}
	\begin{tikzpicture}[scale=0.5]
		\tikzstyle{every node}=[thick,minimum size=4pt, inner sep=1pt]
		\node[circle, fill=black, label=right:{\tiny $m_n$}] (b0) at (-2,-0.5)  {};
		\node (b1) at (-3.5,1.5)  [minimum size=0pt,label=above:$1$]{};
		\node (b2) at (-2,1.5)  [minimum size=0pt  ]{};
		\node (b3) at (-0.5,1.5)  [minimum size=0pt,label=above:$n$]{};
		\draw        (b0)--(b1);
		\draw        (b0)--(b2);
		\draw        (b0)--(b3);
		\draw [dotted,line width=1pt] (-3,1)--(-2.2,1);
		\draw [dotted,line width=1pt] (-1.8,1)--(-1,1);
	\end{tikzpicture}
	\hspace{8mm}
	\begin{tikzpicture}[scale=0.5]
		\tikzstyle{every node}=[thick,minimum size=4pt, inner sep=1pt]
		\node[circle, draw, label=right:{\tiny $R_n$}] (b0) at (-2,-0.5)  {};
		\node (b1) at (-3.5,1.5)  [minimum size=0pt,label=above:$1$]{};
		\node (b2) at (-2,1.5)  [minimum size=0pt,label=above: ]{};
		\node (b3) at (-0.5,1.5)  [minimum size=0pt,label=above:$n$]{};
		\draw        (b0)--(b1);
		\draw        (b0)--(b2);
		\draw        (b0)--(b3);
		\draw [dotted,line width=1pt] (-3,1)--(-2.2,1);
		\draw [dotted,line width=1pt] (-1.8,1)--(-1,1);
	\end{tikzpicture}
\hspace{8mm}
\begin{tikzpicture}[scale=0.5]
	\tikzstyle{every node}=[thick,minimum size=5pt, inner sep=1pt]
	\node[fill=red, circle,, label=right:{\tiny $S_n$}] (b0) at (-2,-0.5)  {};
	\node (b1) at (-3.5,1.5)  [minimum size=0pt,label=above:$1$]{};
	\node (b2) at (-2,1.5)  [minimum size=0pt,label=above: ]{};
	\node (b3) at (-0.5,1.5)  [minimum size=0pt,label=above:$n$]{};
	\draw        (b0)--(b1);
	\draw        (b0)--(b2);
	\draw        (b0)--(b3);
	\draw [dotted,line width=1pt] (-3,1)--(-2.2,1);
	\draw [dotted,line width=1pt] (-1.8,1)--(-1,1);
\end{tikzpicture}
\end{eqnarray*}
In this means, the action of the  differential operator $\partial$ on generators can be expressed by trees as follows:
\begin{eqnarray*}
	\begin{tikzpicture}[scale=0.6]
		\tikzstyle{every node}=[thick,minimum size=4pt, inner sep=1pt]
		\node(a) at (-3.3,0){\large $\partial$};
		\node[circle, fill=black, label=right:$m_n$] (b0) at (-2,-0.5)  {};
		\node (b1) at (-3.5,1.5)  [minimum size=0pt,label=above:$1$]{};
		\node (b2) at (-2,1.5)  [minimum size=0pt  ]{};
		\node (b3) at (-0.5,1.5)  [minimum size=0pt,label=above:$n$]{};
		\draw        (b0)--(b1);
		\draw        (b0)--(b2);
		\draw        (b0)--(b3);
		\draw [dotted,line width=1pt] (-3,1)--(-2.2,1);
		\draw [dotted,line width=1pt] (-1.8,1)--(-1,1);
	\end{tikzpicture}
	&
	\begin{tikzpicture}
		\node(0){{$=\sum\limits_{j=2}^{n-1}\sum\limits_{i=1}^{n-j+1}(-1)^{i+j(n-i)}$}};
	\end{tikzpicture}
	&
	\begin{tikzpicture}[scale=0.7]
		\tikzstyle{every node}=[thick,minimum size=4pt, inner sep=1pt]
		\node(e0) at (0,-1.5)[circle, fill=black,label=right:$m_{n-j+1}$]{};
		\node(e1) at(-1.5,0){{\tiny$1$}};
		\node(e2-0) at (0,-0.5){{\tiny$i$}};
		\node(e3) at (1.5,0){{\tiny{$n-j+1$}}};
		\node(e2-1) at (0,0.5) [circle,fill=black,label=right: $m_j$]{};
		\node(e2-1-1) at (-1,1.5){{\tiny$1$}};
		\node(e2-1-2) at (1, 1.5){{\tiny $j$}};
		\draw [dotted,line width=1pt] (-0.7,-0.5)--(-0.2,-0.5);
		\draw [dotted,line width=1pt] (0.3,-0.5)--(0.8,-0.5);
		\draw [dotted,line width=1pt] (-0.4,1)--(0.4,1);
		\draw        (e0)--(e1);
		\draw         (e0)--(e3);
		\draw         (e0)--(e2-0);
		\draw         (e2-0)--(e2-1);
		\draw        (e2-1)--(e2-1-1);
		\draw        (e2-1)--(e2-1-2);
	\end{tikzpicture}	
\end{eqnarray*}

\begin{eqnarray*}
	\begin{tikzpicture}[scale=0.5]
		\tikzstyle{every node}=[thick,minimum size=4pt, inner sep=1pt]
		\node(a) at (-3.3,0){\large $\partial$};
		\node[circle, draw, label=right:{\tiny $R_n$}] (b0) at (-2,-0.5)  {};
		\node (b1) at (-3.5,1.5)  [minimum size=0pt,label=above:$1$]{};
		\node (b2) at (-2,1.5)  [minimum size=0pt,label=above: ]{};
		\node (b3) at (-0.5,1.5)  [minimum size=0pt,label=above:$n$]{};
		\draw        (b0)--(b1);
		\draw        (b0)--(b2);
		\draw        (b0)--(b3);
		\draw [dotted,line width=1pt] (-3,1)--(-2.2,1);
		\draw [dotted,line width=1pt] (-1.8,1)--(-1,1);
	\end{tikzpicture}
	&
	\begin{tikzpicture}
		\node(0){{$= \sum\limits_{k=2}^n\sum\limits_{l_1+\cdots+l_k=n \atop l_1, \dots, l_k\geqslant 1}(-1)^{\alpha}$}};
	\end{tikzpicture}
	&
	\begin{tikzpicture}[scale=0.8]
		\tikzstyle{every node}=[thick,minimum size=4pt, inner sep=1pt]
		\node(e0) at (0,-1.5)[circle, fill=black,label=right:{\tiny $m_{k}$}]{};
		\node(e1) at(-1.2,-0.3)[circle, draw, label=left:{\tiny $R_{l_1}$}]{};
		\node(e1-1) at(-2,0.8){};
		\node(e1-2) at (-1,0.8){};
		\node(e3) at (1.2,-0.3)[draw, circle, label=right: {\tiny $R_{l_k}$}]{};
		\node(e3-1) at (1,0.8){};
		\node(e3-2) at (2,0.8){};
		\node(e2-1) at (-0.3,-0.3) [draw,circle,label=right: {\tiny $R_{l_2}$}]{};
		\node(e2-1-1) at (-0.5,0.8){};
		\node(e2-1-2) at (0.5, 0.8){};
		\draw [dotted,line width=0.8pt] (0.2,-0.6)--(0.7,-0.6);
		\draw [dotted,line width=0.8pt] (-0.3,0.5)--(0.1,0.5);
		\draw [dotted,line width=0.8pt] (-1.6,0.5)--(-1.2, 0.5);
		\draw [dotted, line width=0.8pt] (1.2,0.5)--(1.6,0.5);
		\draw        (e0)--(e1);
		\draw         (e0)--(e3);
		\draw         (e1)--(e1-1);
		\draw          (e1)--(e1-2);
		\draw         (e0)--(e2-1);
		\draw        (e2-1)--(e2-1-1);
		\draw        (e2-1)--(e2-1-2);
		\draw        (e3)--(e3-1);
		\draw        (e3)--(e3-2);
		
	\end{tikzpicture}	\\
	&\begin{tikzpicture}
		\node(0){{$+\sum\limits_{{\small\substack{2\leqslant p\leqslant n \\ 1\leqslant j\leqslant p
				}}}\sum\limits_{\small\substack{ r_1+\dots+r_p=n\\r_1, \dots, r_q\geqslant 1\\1\leqslant i\leqslant r_1 }}(-1)^{\beta}$}};
	\end{tikzpicture}&
	\begin{tikzpicture}[scale=0.8,descr/.style={fill=white}]
		\tikzstyle{every node}=[minimum size=4pt, inner sep=1pt]
		\node(v0) at (0,0)[draw, circle,label=left:{\tiny $R_{r_1}$}]{};
		\node(v1-1) at (-2,1){};
		\node(v1-2) at(0,1.2)[fill=black,draw,circle,label=right: {\scriptsize $\ \ m_p$}]{};
		\node(v1-3) at(2,1){};
		\node(v2-1) at(-1.5,2.8)[draw, circle,label=left:{\tiny $R_{r_2}$}]{};
		\node(v2-2) at (-0.5, 2.8)[draw, circle,label=left:{\tiny $R_{r_j}$}]{};
		\node(v2-3) at (0,2.9){\tiny $j$}{};
		\node(v2-4) at(1.6,2.8)[fill=red, circle,label=right:{\tiny{$S_{r_p}$}}]{};
		\node(v2-5) at(0.6,2.8)[fill=red, circle,label=right:{\tiny $S_{r_{j+1}}$}]{};
		\node(v3-5) at (-2,3.5){};
		\node(v3-6) at (-1.3,3.5){};
		\node(v3-1) at (-1,3.5){};
		\node(v3-2) at (-0.3,3.5){};
		\node(v3-3) at (1.3,3.5){};
		\node(v3-4) at(2,3.5){};
		\node(v3-7) at (1,3.5){};
		\node(v3-8) at (0.3,3.5){};
		\draw(v0)--(v1-1);
		\draw(v0)--(v1-3);
		\path[-,font=\scriptsize]
		(v0) edge node[descr]{{\tiny$i$}} (v1-2);
		\draw(v1-2)--(v2-1);
		\draw(v1-2)--(v2-3);
		\draw(v1-2)--(v2-2);
		\draw(v1-2)--(v2-4);
		\draw(v1-2)--(v2-5);
		\draw(v2-1)--(v3-5);
		\draw(v2-1)--(v3-6);
		\draw(v2-2)--(v3-1);
		\draw(v2-2)--(v3-2);
		\draw(v2-4)--(v3-3);
		\draw(v2-4)--(v3-4);
		\draw(v2-5)--(v3-7);
		\draw(v2-5)--(v3-8);
		\draw[dotted,line width=0.8pt](-1,0.6)--(-0.1,0.6);
		\draw[dotted,line width=0.8pt](0.1,0.6)--(1,0.6);
		\draw[dotted,line width=0.8pt](0.6,2.4)--(1,2.4);
		\draw[dotted,line width=0.8pt](-0.9,2.4)--(-0.5,2.4);
		\draw[dotted,line width=0.8pt](-1.7,3.2)--(-1.4,3.2);
		\draw[dotted,line width=0.8pt](-0.7,3.2)--(-0.4,3.2);
		\draw[dotted,line width=0.8pt](1.7,3.2)--(1.4,3.2);
		\draw[dotted,line width=0.8pt](0.8,3.2)--(0.5,3.2);
	\end{tikzpicture}
\end{eqnarray*}

\begin{eqnarray*}
	\begin{tikzpicture}[scale=0.5]
		\tikzstyle{every node}=[thick,minimum size=4pt, inner sep=1pt]
		\node(a) at (-3.3,0){\large$\partial$};
		\node[fill=red, circle, label=right:{\tiny $S_n$}] (b0) at (-2,-0.5)  {};
		\node (b1) at (-3.5,1.5)  [minimum size=0pt,label=above:$1$]{};
		\node (b2) at (-2,1.5)  [minimum size=0pt,label=above: ]{};
		\node (b3) at (-0.5,1.5)  [minimum size=0pt,label=above:$n$]{};
		\draw        (b0)--(b1);
		\draw        (b0)--(b2);
		\draw        (b0)--(b3);
		\draw [dotted,line width=1pt] (-3,1)--(-2.2,1);
		\draw [dotted,line width=1pt] (-1.8,1)--(-1,1);
	\end{tikzpicture}
	&
	\begin{tikzpicture}
		\node(0){{$=  \sum\limits_{k=2}^n\sum\limits_{l_1+\cdots+l_k=n \atop l_1, \dots, l_k\geqslant 1}(-1)^{\alpha}$}};
	\end{tikzpicture}
	&
	\begin{tikzpicture}[scale=0.8]
		\tikzstyle{every node}=[thick,minimum size=4pt, inner sep=1pt]
		\node(e0) at (0,-1.5)[circle, fill=black,label=right:{\tiny $m_{k}$}]{};
		\node(e1) at(-1.2,-0.3)[fill=red, circle, label=left:{\tiny $S_{l_1}$}]{};
		\node(e1-1) at(-2,0.8){};
		\node(e1-2) at (-1,0.8){};
		\node(e3) at (1.2,-0.3)[fill=red, circle, label=right: {\tiny $S_{l_k}$}]{};
		\node(e3-1) at (1,0.8){};
		\node(e3-2) at (2,0.8){};
		\node(e2-1) at (-0.3,-0.3) [fill=red, circle,label=right:{\tiny  $S_{l_2}$}]{};
		\node(e2-1-1) at (-0.5,0.8){};
		\node(e2-1-2) at (0.5, 0.8){};
		\draw [dotted,line width=1pt] (0.2,-0.6)--(0.7,-0.6);
		\draw [dotted,line width=1pt] (-0.3,0.5)--(0.1,0.5);
		\draw [dotted,line width=1pt] (-1.6,0.5)--(-1.2, 0.5);
		\draw [dotted, line width=1pt] (1.2,0.5)--(1.6,0.5);
		\draw        (e0)--(e1);
		\draw         (e0)--(e3);
		\draw         (e1)--(e1-1);
		\draw          (e1)--(e1-2);
		\draw         (e0)--(e2-1);
		\draw        (e2-1)--(e2-1-1);
		\draw        (e2-1)--(e2-1-2);
		\draw        (e3)--(e3-1);
		\draw        (e3)--(e3-2);
		
	\end{tikzpicture}	\\
	&\begin{tikzpicture}
		\node(0){{$+
				\sum\limits_{{\small\substack{2\leqslant p\leqslant n \\ 1\leqslant j\leqslant p
				}}}\sum\limits_{\small\substack{ r_1+\dots+r_p=n\\r_1, \dots, r_q\geqslant 1\\1\leqslant i\leqslant r_1 }}(-1)^{\beta}$}};
	\end{tikzpicture}&
	\begin{tikzpicture}[scale=0.8,descr/.style={fill=white}]
		\tikzstyle{every node}=[minimum size=4pt, inner sep=1pt]
		
		\node(v0) at (0,0)[fill=red, circle,,label=left:{\tiny $S_{r_1}$}]{};
		\node(v1-1) at (-2,1){};
		\node(v1-2) at(0,1.2)[fill=black,draw,circle,label=right: {\scriptsize $\ \ m_p$}]{};
		\node(v1-3) at(2,1){};
		\node(v2-1) at(-1.4,2.8)[draw, circle,label=left:{\tiny $R_{r_2}$}]{};
		\node(v2-2) at (-0.5, 2.8)[draw, circle,label=left:{\tiny $R_{r_j}$}]{};
		\node(v2-3) at (0,2.9){\tiny $j$}{};
		\node(v2-4) at(1.5,2.8)[fill=red, circle,label=right:{\tiny $S_{r_p}$}]{};
		\node(v2-5) at(0.6,2.8)[fill=red, circle,label=right:{\tiny $S_{r_{j+1}}$}]{};
		\node(v3-5) at (-2,3.5){};
		\node(v3-6) at (-1.3,3.5){};
		\node(v3-1) at (-1,3.5){};
		\node(v3-2) at (-0.3,3.5){};
		\node(v3-3) at (1.3,3.5){};
		\node(v3-4) at(2,3.5){};
		\node(v3-7) at (1,3.5){};
		\node(v3-8) at (0.3,3.5){};
		\draw(v0)--(v1-1);
		\draw(v0)--(v1-3);
		\path[-,font=\scriptsize]
		(v0) edge node[descr]{{\tiny$i$}} (v1-2);
		\draw(v1-2)--(v2-1);
		\draw(v1-2)--(v2-3);
		\draw(v1-2)--(v2-2);
		\draw(v1-2)--(v2-4);
		\draw(v1-2)--(v2-5);
		\draw(v2-1)--(v3-5);
		\draw(v2-1)--(v3-6);
		\draw(v2-2)--(v3-1);
		\draw(v2-2)--(v3-2);
		\draw(v2-4)--(v3-3);
		\draw(v2-4)--(v3-4);
		\draw(v2-5)--(v3-7);
		\draw(v2-5)--(v3-8);
		\draw[dotted,line width=0.8pt](-1,0.6)--(-0.1,0.6);
		\draw[dotted,line width=0.8pt](0.1,0.6)--(1,0.6);
		\draw[dotted,line width=0.8pt](0.6,2.4)--(1,2.4);
		\draw[dotted,line width=0.8pt](-0.9,2.4)--(-0.5,2.4);
		\draw[dotted,line width=0.8pt](-1.7,3.2)--(-1.4,3.2);
		\draw[dotted,line width=0.8pt](-0.7,3.2)--(-0.4,3.2);
		\draw[dotted,line width=0.8pt](1.7,3.2)--(1.4,3.2);
		\draw[dotted,line width=0.8pt](0.8,3.2)--(0.5,3.2);
	\end{tikzpicture}
\end{eqnarray*}

The following theorem is the main result of this section, with its proof comprising the remainder of the section.
\begin{thm}\label{Thm: Minimal model}
The dg operad $\RBSinfty$ is the minimal model of the operad $\RBS$.
\end{thm}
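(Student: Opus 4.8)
The plan is to establish the two structural conditions of Definition~\ref{Def: Minimal model of operads} and then to produce a surjective quasi-isomorphism $\pi\colon \RBSinfty \twoheadrightarrow \RBS$. Since $\RBSinfty = \Omega(\RBS^\ac)$ is by definition the cobar construction of a homotopy cooperad, it is free as a graded operad on $s^{-1}\overline{\RBS^\ac}$, hence quasi-free. Every structural cooperation $\Delta_T$ of $\RBS^\ac$ is indexed by a tree $T\in\frakt$ with at least two vertices, so the induced differential $\partial$ carries each generator into $\mathcal{F}(s^{-1}\overline{\RBS^\ac})^{\geqslant 2}$; this is exactly the decomposability condition~(i). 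For condition~(ii) I would set $M_{(i)} = \bfk\, m_{i+1}\oplus \bfk\, R_i\oplus \bfk\, S_i$ for $i\geqslant 1$, so that $m_n$, $R_n$, $S_n$ are indexed by $n-1$, $n$, $n$ respectively. Reading off formulae~\eqref{Eq: defining HRB 1}--\eqref{Eq: defining HRB 3}, every generator occurring in $\partial m_{k+2}$, $\partial R_{k+1}$ or $\partial S_{k+1}$ has index at most $k$ (the only index-$k$ contribution being the corolla $m_{k+1}$), whence $\partial(M_{(k+1)})\subset \mathcal{F}\big(\bigoplus_{i=1}^k M_{(i)}\big)$.

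Next I would define the projection $\pi$ on generators by $m_2\mapsto \mu$, $R_1\mapsto R$, $S_1\mapsto S$, and $x\mapsto 0$ for every remaining generator, extending multiplicatively to a morphism of graded operads; here $\RBS$ is regarded as a dg operad concentrated in degree $0$ with zero differential. To see that $\pi$ is a chain map it suffices to check $\pi\circ\partial = 0$ on generators, and the only contributions that survive $\pi$ come from the weight-two part of $\partial m_3$, $\partial R_2$ and $\partial S_2$; a direct comparison with~\eqref{Eq: defining HRB 1}--\eqref{Eq: defining HRB 3} identifies these images with the associativity relation and the two coupled Rota--Baxter relations generating the ideal of $\RBS$. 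As $\mu$, $R$, $S$ generate $\RBS$ and lie in the image, $\pi$ is surjective.

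The core of the argument is to prove that $\pi$ is a quasi-isomorphism; because $\RBS$ is not Koszul this cannot be read off from a quadratic cooperad, and I would instead follow the deformation strategy of Dotsenko--Khoroshkin \cite{DK10, DK13}. First, realize $\RBS$ as a shuffle operad, fix an admissible monomial order, and orient the three defining relations into a rewriting system with leading monomials $\mu\circ_2\mu$, $\mu\circ(R,R)$ and $\mu\circ(S,S)$; confluence of this system (checked on the generating overlaps) exhibits a Gröbner basis and, via the operadic Diamond Lemma, a PBW basis of $\RBS$ together with the monomial operad $\RBS^{m}$ spanned by the leading terms. Appendix~\ref{Appendix: the minimal model for monomimal operad} constructs the minimal model $(\mathcal{F}(M^{m}),\partial^{m})$ of $\RBS^{m}$; its decisive feature is that the generating collection $M^{m}$ is indexed precisely by the trees of types~(I), (II), (III) of Section~\ref{Section: Koszul dual cooperad}, hence is isomorphic to $s^{-1}\overline{\RBS^\ac}$ as a graded collection. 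I would then filter $\RBSinfty$ by the chosen monomial order and show that the associated graded dg operad coincides with $(\mathcal{F}(M^{m}),\partial^{m})$, so that the spectral sequence of the filtration has first page $E^{1}\cong H_{\ast}(\mathcal{F}(M^{m}),\partial^{m})\cong \RBS^{m}$ concentrated in homological degree $0$. Convergence of the spectral sequence, guaranteed because each arity component is a finite-dimensional complex, then yields $H_{\ast}(\RBSinfty)\cong \RBS$ concentrated in degree $0$, and the comparison with the PBW basis shows that $\pi$ realizes this isomorphism.

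The principal obstacle lies in this last identification. One must verify that the intricate signs $\alpha$, $\beta$ and $\gamma$ together with the tree combinatorics encoded in~\eqref{Eq: defining HRB 2}--\eqref{Eq: defining HRB 3} reproduce \emph{exactly} the higher syzygies of the monomial operad $\RBS^{m}$: the leading part of $\partial$ with respect to the monomial filtration must match $\partial^{m}$ on the associated graded, while the lower-order terms constitute a genuine perturbation that neither creates nor destroys homology. This rests on two computations that I expect to be the most demanding, namely the relation $\partial^2 = 0$ with the given signs, which is equivalent to the homotopy-cooperad axioms for $\RBS^\ac$ established in Appendix~\ref{Appendix: Koszul dual homotopy cooperad}, and the leading-term bookkeeping showing that the tree decompositions of types~(I)--(III) enumerate all overlaps of the Gröbner basis with the correct multiplicities, so that the deformation comparison with $\RBS^{m}$ is exact.
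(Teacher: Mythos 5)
Your skeleton coincides with the paper's: check conditions (i)--(ii), exhibit the surjection $\phi$ with $H_0(\RBSinfty)\cong\RBS$, then filter $\RBSinfty$ so that the associated graded is the minimal model of a monomial operad (Appendix~\ref{Appendix: the minimal model for monomimal operad}) and conclude by a spectral-sequence argument. The routine parts (quasi-freeness, decomposability, the grading $M_{(i)}$, surjectivity and the chain-map check) are fine. The problem is that your central identification does not match the appendix you invoke. The monomial operad produced by your choice of leading terms ($\mu\circ_2\mu$, $\mu\circ(R,R)$, $\mu\circ(S,S)$) is \emph{not} the operad ${}_m\RBS$ of Appendix~\ref{Appendix: the minimal model for monomimal operad}, whose relations are $\mu\circ_1\mu$, $R\circ_1(\mu\circ_1 R)$ and $S\circ_1(\mu\circ_1 R)$. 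Concretely, the associated graded differential of the paper's filtration retains the terms $R_{r_1}\circ_1 m_2\circ_1 R_{r_2}$ and $S_{r_1}\circ_1 m_2\circ_1 R_{r_2}$ in $\bar\partial R_n$, $\bar\partial S_n$ --- note in particular the mixed term $S(R(\,\cdot\,)\cdot)$, which can never appear as the leading monomial of the $S$-relation in your scheme --- and \emph{discards} the terms $m_k\circ(R_{l_1},\dots,R_{l_k})$ that you would keep. The contracting homotopy $\overline{\mathfrak{H}}$ of the appendix is built entirely around the typical divisors $m_{n-1}\circ_1 m_2$, $(R_{n-1}\circ_1 m_2)\circ_1 R_1$, $(S_{n-1}\circ_1 m_2)\circ_1 R_1$; with your order the associated graded is a different complex and that homotopy does not apply. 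So the step ``the associated graded dg operad coincides with $(\mathcal{F}(M^{m}),\partial^{m})$'' fails as stated, and the whole argument collapses onto a monomial operad whose acyclicity in positive degrees you have not established.

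Two further points. First, the Gr\"obner-basis/PBW detour is both unverified and unnecessary: confluence of your oriented system is a substantive claim (the overlap of $\mu\circ(R,R)$ with associativity already forces the $S$-relation into the reductions, and you only assert that the generating overlaps close), and the paper never needs a Gr\"obner basis for $\RBS$ --- it computes $H_0(\RBSinfty)\cong\RBS$ directly from the weight-two part of $\partial$, so once the positive homology of the associated graded vanishes the theorem follows with no PBW comparison. Second, your convergence justification is wrong: the arity components of $\RBSinfty$ are \emph{not} finite dimensional (the degree-zero, arity-one generators $R_1,S_1$ already make $\RBSinfty(1)$ infinite dimensional, and they can be inserted freely in every arity). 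Convergence holds instead because the filtration is exhaustive and bounded below and each arity component is concentrated in finitely many homological degrees. To repair the proof you should either adopt the paper's ordering (weight, then the path-extension-lexicographic order induced by $R\prec S\prec\mu$ via the substitution $\omega$), so that the associated graded really is ${}_m\RBSinfty$, or else redo the appendix's homotopy for your own monomial operad; as written, neither is done.
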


\bigskip

In order to prove Theorem~\ref{Thm: Minimal model},  we are going to construct a quasi-isomorphism of dg operads $\RBSinfty\xrightarrow{~}\RBS$, where $\RBS$ is considered as a dg operad concentrated in degree 0.

Recall that the  operad   $\RBS$  is generated by two unary operators $R$, $S$ and a binary operator $\mu$ with  operadic relations:\[ \mu \circ_{1} \mu  - \mu \circ_{2} \mu ,\]
$$ (\mu\circ_1 R)\circ_2 R-(R \circ_1\mu)\circ_1 R-(R \circ_1\mu)\circ_2 S ,$$
and
$$ (\mu\circ_1 S)\circ_2 S-(S \circ_1\mu)\circ_1 R-(S \circ_1\mu)\circ_2 S .$$
\begin{lem} There exists a  natural surjective  map $\phi: \RBSinfty\twoheadrightarrow \RBS$ of dg operads sending $m_2$ (resp. $R_1$, $S_1$, all other generators) to $\mu$ (resp. $R$, $S$, zero), which induces an isomorphism
$\rmH_0(\RBSinfty)  \cong \RBS$.

\end{lem}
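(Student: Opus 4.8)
The plan is to define $\phi$ on generators, verify it is a morphism of dg operads, note surjectivity, and then extract $\rmH_0$ from the homological grading. Since $\RBSinfty=\Omega(\RBS^\ac)$ is quasi-free, the assignment $m_2\mapsto \mu$, $R_1\mapsto R$, $S_1\mapsto S$, and every other generator $\mapsto 0$ extends uniquely to a morphism of graded operads $\phi\colon\RBSinfty\to\RBS$. To see that $\phi$ is a chain map (the target carrying the zero differential) I would argue that, because $\partial$ is a derivation and $\phi$ an operad morphism, $\phi\partial$ and $0=d_{\RBS}\phi$ agree on all of $\RBSinfty$ as soon as they agree on the generators; this follows by induction on the number of internal vertices of a tree. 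So the whole of Step~1 reduces to checking $\phi(\partial m_n)=\phi(\partial R_n)=\phi(\partial S_n)=0$. Here the key observation is that $\phi$ annihilates any tree carrying a vertex $m_k$ with $k\neq2$, or $R_l,S_l$ with $l\neq1$. Inspecting \eqref{Eq: defining HRB 1}, every summand of $\partial m_n$ is a composite of two $m$-corollas, so only $n=3$ survives $\phi$; there the sign $(-1)^{i+j(n-i)}$ with $j=2$ reduces to $(-1)^i$, giving $-m_2\circ_1 m_2+m_2\circ_2 m_2$, i.e.\ (up to sign) the associativity relation, which vanishes in $\RBS$. In \eqref{Eq: defining HRB 2} and \eqref{Eq: defining HRB 3} the first sum survives $\phi$ only when $k=2$ and all $l_t=1$, and the second sum only when $p=2$ and all $r_t=1$; both force $n=2$, and the three surviving terms reproduce, up to sign, exactly the two defining relations \eqref{RBS1} and \eqref{RBS2} of $\RBS$. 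Hence $\phi\partial=0$ on generators.

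Surjectivity is immediate: $\mu$, $R$, $S$ generate $\RBS$ and all lie in the image of $\phi$.

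For the homological statement I would use the grading. Since $|m_n|=n-2$ and $|R_n|=|S_n|=n-1$ are all $\geq0$, the free operad $\RBSinfty$ is concentrated in non-negative homological degrees and $\partial$ has degree $-1$; thus $(\RBSinfty)_{-1}=0$, so $\ker\big(\partial\colon(\RBSinfty)_0\to(\RBSinfty)_{-1}\big)=(\RBSinfty)_0$ and $\rmH_0(\RBSinfty)=(\RBSinfty)_0/\im(\partial|_{\text{deg }1})$. The only degree-$0$ generators are $m_2,R_1,S_1$, so $(\RBSinfty)_0$ is the free operad $\mathcal{F}_0:=\mathcal{F}(m_2,R_1,S_1)$, and under the identification $m_2\leftrightarrow\mu,\ R_1\leftrightarrow R,\ S_1\leftrightarrow S$ the restriction $\phi|_{\mathcal{F}_0}$ is precisely the canonical projection $\mathcal{F}_0\twoheadrightarrow\RBS=\mathcal{F}_0/I$, where $I$ is the operadic ideal generated by the three relations. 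It then remains to identify $\im(\partial|_{\text{deg }1})$ with $I$. The degree-$1$ part is spanned by trees $C[\nu]$ with a single degree-$1$ vertex $\nu\in\{m_3,R_2,S_2\}$ placed in a context $C$ whose other vertices lie in $\{m_2,R_1,S_1\}$; since $\partial m_2=\partial R_1=\partial S_1=0$, the derivation $\partial$ differentiates only $\nu$, yielding $\pm\,C[\partial\nu]$. By the $n=3$ and $n=2$ computations above, $\partial m_3,\partial R_2,\partial S_2$ are (up to sign) the three generators of $I$ inside $\mathcal{F}_0$, so $C[\partial\nu]\in I$; conversely each generator $C[g_i]$ of $I$ equals $\pm\,\partial(C[\nu_i])\in\im(\partial|_{\text{deg }1})$. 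Therefore $\im(\partial|_{\text{deg }1})=I$, and $\phi$ induces the isomorphism $\rmH_0(\RBSinfty)=\mathcal{F}_0/I\xrightarrow{\cong}\RBS$.

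The main obstacle I anticipate is the sign bookkeeping in Step~1: one must confirm that the signs $(-1)^\alpha$, $(-1)^\beta$, and $(-1)^{i+j(n-i)}$ combine so that the terms surviving $\phi$ assemble into a nonzero scalar multiple of \emph{each} defining relation separately, rather than into some unintended linear combination. Once this is pinned down, the derivation-plus-ideal argument identifying $\im(\partial|_{\text{deg }1})$ with the relation ideal $I$ is formal, and everything else is a direct consequence of the non-negative homological grading.
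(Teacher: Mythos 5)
Your proposal is correct and follows essentially the same route as the paper: identify the degree-zero part of $\RBSinfty$ with the free operad on $m_2, R_1, S_1$, observe that the image of $\partial$ in degree zero is the operadic ideal generated by $\partial m_3$, $\partial R_2$, $\partial S_2$, and compute these to be (up to sign) the three defining relations of $\RBS$. Your additional justification that a degree-one tree monomial has exactly one vertex from $\{m_3,R_2,S_2\}$, so the derivation $\partial$ lands in the ideal generated by $\partial m_3,\partial R_2,\partial S_2$, is exactly the step the paper leaves implicit.
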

\begin{proof}
The degree zero part of $\RBSinfty$ is the free graded operad  generated by $\{m_2, R_1,S_1\}$. The image of $\partial$  in this  degree zero part is the operadic ideal   generated by $\partial R_2,\partial S_2$ and $\partial m_3$. By definition, we have
\begin{eqnarray*}
	\partial(m_3)&=&  m_{2} \circ_{1} m_{2}-  m_{2} \circ_{2} m_{2},\\
	\partial (R_2)&=& R_1\circ_1(m_2\circ_1 R_1)+R_1\circ_1(m_2\circ_2 S_1)-(m_2\circ_1R_1)\circ_2 R_1,\\
	\partial (S_2)&=& S_1\circ_1(m_2\circ_1 R_1)+S_1\circ_1(m_2\circ_2 S_1)-(m_2\circ_1S_1)\circ_2 S_1.\\
\end{eqnarray*}
Thus  the map  $\phi: \RBSinfty\twoheadrightarrow \RBS$ induces the isomorphism  $\rmH_0(\RBSinfty)  \cong \RBS$.
\end{proof}

\medskip
To show that  $\phi: \RBSinfty\twoheadrightarrow \RBS$ is a quasi-isomorphism, we just need to prove that $\rmH_i(\RBSinfty)=0$ for all $i\geqslant 1$. To achieve this, we will  construct a filtration on the dg operad $\RBS_\infty$, and we will prove that the homology of $0$-the page of the spectral sequence associated to the filtration  concentrates in degree 0 (see Theorem~\ref{Thm: inclusion-exclusion model for the monomials of RBS}). Then using convergence argument of spectral sequence, we have that the homology of $\RBS_\infty$ concentrates in degree $0$.

%
%
%


\bigskip

\textbf{Proof of Theorem~\ref{Thm: Minimal model}:}
%

\begin{proof}
It is obvious that the dg operad $\RBS_\infty$ satisfies conditions $\rm{(i) (ii)}$ of Definition~\ref{Def: Minimal model of operads}. So we just need to prove that $\phi: \RBS_\infty\rightarrow \RBS$ is a quasi-isomorphism.
Let $\mathfrak{Tr}^{(2, 1)}$ be the free operad generated by two unary operations $R$, $S$, and a binary operation $\mu$.
Now, let's introduce a new ordering $\prec$ on the the set $\mathfrak{Tr}^{(2, 1)}(n)$ as follows: For two tree monomials $\mathcal{T, T'}$ in $\mathfrak{Tr}^{(2, 1)}(n)$,
\begin{itemize}
	\item[(1)]If $\mbox{weight}(\mathcal{T})<\mbox{weight}( \mathcal{T'})$, then $\mathcal{T}\prec\mathcal{T'}$.
	\item[(2)] If $\mbox{weight}(\mathcal{T})=\mbox{weight}( \mathcal{T'})$, compare $\mathcal{T}$ with $\mathcal{T'}$ via the natural  path-extension-lexicographic induced by setting $R\prec S\prec\mu.$
\end{itemize}
With respect to the ordering $\prec$, the set $\mathfrak{Tr}^{(2, 1)}(n)$ becomes a totally ordered set: $\mathfrak{Tr}^{(2, 1)}(n)=\{x_1\prec x_2\prec x_3\prec\dots\}\cong \mathbb{N}^+$. Now, we define a map $\omega:\RBSinfty\rightarrow \mathfrak{Tr}^{(2, 1)}$ by replacing vertices $m_n (n\geqslant 2), R_n(n\geqslant 1), S_n(n\geqslant 1)$ by $\underbrace{\mu\circ_1\mu\circ_1\mu\dots\circ_1\mu}_{n-1}$, $ \underbrace{R\circ_1\mu\circ_1R\circ_1\mu\circ\dots\circ_1R\circ_1\mu\circ_1R}_{2n-1},$ and $ \underbrace{S\circ_1\mu\circ_1R\circ_1\mu\circ\dots\circ_1R\circ_1\mu\circ_1R}_{2n-1}$ in $\mathfrak{Tr}^{(2, 1)}$ respectively. Notice that different tree monomials in $\RBSinfty$ may have same image in $\mathfrak{Tr}^{(2, 1)}$ under the action $\omega$.
Define $\mathcal{F}^n_i $  to be the subspace of $\RBSinfty(n)$ spanned by the tree monomials $\mathcal{R}$ with $\omega(\mathcal{R})$ smaller than or equal to $x_i$ with respect to  the ordering $\prec$ on $\mathfrak{Tr}^{(2, 1)}(n)$.
Then we get a bounded below and exhaustive filtration for $\RBSinfty(n)$ \[0=\mathcal{F}^n_0\subset \mathcal{F}^n_1\subset\mathcal{F}^n_2\subset \cdots \subset \RBSinfty(n) .\]
It  can be easily seen that the filtration is compatible with the differential $\partial$ on $\RBSinfty(n)$.
Thus the spectral sequence associated to the filtration will converge to the homology of $\RBSinfty(n)$.

 Moreover, one can prove that there is an isomorphism of complexes
$$\bigoplus_{i\geq0}\mathcal{F}^n_{i+1}/\mathcal{F}^n_i\cong {}_m\RBSinfty(n),$$
where ${}_m\RBS_\infty$ is a quasi-free dg operad generated by operations $\{m_n\}_{n\geq 2}$, $\{R_n\}_{n\geq1}$ and $\{S_n\}_{n\geq1}$. The action of the differential $\bar\partial$ on these generators in ${}_m\RBS_\infty$ is given by the following :
$$\begin{array}{rcl}\label{Eq: defining mHRB 1} &&\forall n\geqslant 2,\quad
\bar{\partial}{m_n} = \sum_{j=2}^{n-1}(-1)^{1+j(n-1)}m_{n-j+1}\circ_1 m_j,
	\\&& \forall n\geqslant 1,   \quad   \bar{\partial} R_n=   \sum\limits_{\small\substack{ r_1+ r_2=n\\r_1 , r_2\geqslant 1}}(-1)^{r_1(r_2-1)}\Big(R_{r_1}\circ_1 m_2\circ_1 R_{r_2}\Big) ,\\
	& &\forall n\geqslant 1,   \quad   \bar{\partial} S_n=   \sum\limits_{\small\substack{ r_1+ r_2=n\\r_1 , r_2\geqslant 1}}(-1)^{r_1(r_2-1)}\Big(S_{r_1}\circ_1 m_2\circ_1 R_{r_2}\Big) .
\end{array}$$
According to Theorem~\ref*{Thm: inclusion-exclusion model for the monomials of RBS} in Appendix~\ref{Appendix: the minimal model for monomimal operad}, we have  that  ${}_m\RBSinfty(n)$ is the minimal model of a certain operad.  Thus, all positive homologies of ${}_m\RBSinfty(n)$ vanish. By classical spectral sequence argument, we have that all positive homologies of $\RBSinfty(n)$ are trivial.

\end{proof}


By the general theory of minimal models of operads, the generators of the minimal model for an operad is exactly the desuspension of its Quillen homology, i.e., the homology of the bar construction of the operad. So for the operad $\RBS$,  denote by $\mathrm{B}(\RBS)$  the bar construction of $\RBS$ and   we have the following result.

\begin{cor} There exists   a quasi-isomorphism of homotopy cooperads between ${\RBS^\ac}$ and $ \mathrm{B}(\RBS),$  which induces  an isomorphism of
graded collections  $${\mathrm{H}}_\bullet(\mathrm{B}(\RBS))\cong {\RBS^\ac}.$$

\end{cor}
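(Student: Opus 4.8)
The plan is to read the statement off the bar--cobar adjunction together with the minimal model established in Theorem~\ref{Thm: Minimal model}. Recall that $\mathrm{B}(\RBS)$ is a conilpotent, nonnegatively weight-graded dg cooperad, hence in particular a homotopy cooperad whose weight-zero part is $\cali$, and that the counit of the bar--cobar adjunction $\varepsilon\colon \Omega\mathrm{B}(\RBS)\xrightarrow{\sim}\RBS$ is always a quasi-isomorphism of dg operads. On the other hand, Theorem~\ref{Thm: Minimal model} supplies the quasi-isomorphism $\phi\colon \Omega(\RBS^\ac)=\RBSinfty\xrightarrow{\sim}\RBS$, exhibiting $\Omega(\RBS^\ac)$ as a minimal model. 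The isomorphism of graded collections $\rmH_\bullet(\mathrm{B}(\RBS))\cong\RBS^\ac$ is then essentially immediate from the general principle recalled just above the statement: the generating collection of a minimal model is the desuspension of the Quillen homology, i.e.\ of $\rmH_\bullet(\mathrm{B}(\RBS))$, while the generators of $\Omega(\RBS^\ac)$ are $s^{-1}\overline{\RBS^\ac}$, so the two generating collections must agree.

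To upgrade this to a quasi-isomorphism of homotopy cooperads, I would first produce the comparison map. By the version of the bar--cobar adjunction for coaugmented homotopy cooperads recorded in \cite{CGWZ24, DP16}, morphisms of dg operads $\Omega(\RBS^\ac)\to\RBS$ correspond naturally to twisting morphisms, and these in turn correspond to morphisms of homotopy cooperads $\RBS^\ac\to\mathrm{B}(\RBS)$. Let $f\colon\RBS^\ac\to\mathrm{B}(\RBS)$ be the morphism corresponding to $\phi$; by naturality of the adjunction the induced dg operad map $\Omega f$ fits into a commuting triangle with $\Omega(\RBS^\ac)\xrightarrow{\Omega f}\Omega\mathrm{B}(\RBS)\xrightarrow{\varepsilon}\RBS$ whose composite is $\phi$. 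Since $\phi$ and $\varepsilon$ are quasi-isomorphisms, the two-out-of-three property forces $\Omega f$ to be a quasi-isomorphism of dg operads.

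Next I would invoke the comparison lemma for the cobar functor: because both $\RBS^\ac$ and $\mathrm{B}(\RBS)$ are connected and nonnegatively weight-graded, the functor $\Omega$ reflects quasi-isomorphisms on this class, so $\Omega f$ being a quasi-isomorphism forces $f$ itself to be one. This yields the desired quasi-isomorphism of homotopy cooperads $f\colon\RBS^\ac\xrightarrow{\sim}\mathrm{B}(\RBS)$. Finally, since all the structure operations $\{\Delta_T\}_{T\in\frakt}$ of $\RBS^\ac$ are supported on trees of weight $\geqslant 2$, the weight-one (internal) differential of $\RBS^\ac$ vanishes, whence $\rmH_\bullet(\RBS^\ac)=\RBS^\ac$; applying $\rmH_\bullet$ to $f$ then recovers the isomorphism $\RBS^\ac\cong\rmH_\bullet(\mathrm{B}(\RBS))$, now realised as the map induced by $f$.

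The step I expect to be the main obstacle is the careful formulation of the bar--cobar adjunction and the comparison lemma in the \emph{homotopy} cooperad setting (rather than the strict dg cooperad setting of \cite{GK94}), and verifying that the connectivity and weight-grading hypotheses needed for $\Omega$ to reflect quasi-isomorphisms are indeed met by $\RBS^\ac$ and $\mathrm{B}(\RBS)$. Conceptually this is the homotopy transfer of cooperad structure onto the homology $\rmH_\bullet(\mathrm{B}(\RBS))$, and one should check that the transferred structure is precisely the one defining $\RBS^\ac$; the explicit operations $\Delta_T$ computed in Section~\ref{Section: Koszul dual cooperad} make this identification concrete.
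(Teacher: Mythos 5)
Your proposal is correct and follows essentially the same route the paper takes: the paper deduces the corollary directly from the general principle that the generators of a minimal model are the desuspension of the Quillen homology, combined with Theorem~\ref{Thm: Minimal model}, which is exactly the skeleton you flesh out via the bar--cobar adjunction, two-out-of-three, and the vanishing of the internal differential on $\RBS^\ac$. Your version simply makes explicit the adjunction and comparison-lemma steps that the paper leaves to the cited general theory.
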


\bigskip 

\section{Homotopy Rota-Baxter  systems and infinity-Yang-Baxter pairs}\label{Section: Homotopy Rota-Baxter systems and infinity-Yang-Baxter pairs}

In Section~\ref{Section: minimal model RBSA}, we constructed the minimal model for Rota-Baxter systems operad. Building on this foundation, we now introduce the concept of homotopy Rota-Baxter systems, which serves as higher homotopy version of the concept of Rota-Baxter systems. Additionally, we define the notion of associative infinity-Yang-Baxter pairs and investigate the relationship between these two structures, thereby generalizing the classical correspondence between Rota-Baxter systems and associative Yang-Baxter pairs to a higher homotopical setting.

\subsection{Homotopy Rota-Baxter systems}\

Firstly, let's introduce the notion of homotopy Rota-Baxter systems.
\begin{defn}
	Let $(V,d_V)$ be a complex. A homotopy Rota-Baxter   system on $V$ is defined to be a morphism of dg operads from $\RBSinfty$ to the endomorphism operad $\End_V$.
\end{defn}

Let $(V, d_V)$ be an algebra over the operad $\RBSinfty$.
Still denote by
$  m_n: V^{\ot n}\rightarrow V , n\geqslant 2$ (resp. $ R_n, S_n: V^{\ot n}\rightarrow V, n\geqslant 1$)  the image of $m_n\in \RBSinfty$ (resp. $R_n,S_n\in \RBSinfty$).
We also rewrite $m_1=d_V$. Then Equations~\eqref{Eq: defining HRB 1} and \eqref{Eq: defining HRB 2} give
\begin{eqnarray}\label{Eq: stasheff-id}
	\sum_{    i+j+k= n,\atop
		i, k\geqslant 0, j\geqslant 1 } (-1)^{i+jk}m_{i+1+k}\circ\Big(\id^{\ot i}\ot m_j\ot \id^{\ot k}\Big)=0,
\end{eqnarray}
\begin{eqnarray} \label{Eq: homotopy RB-operator-version-1}
	\sum\limits_{ l_1+\dots+l_k=n,\atop
		l_1, \dots, l_k\geqslant 1 } (-1)^{\delta}m_k\circ\Big(R_{l_1}\ot \cdots \ot R_{l_k}\Big)=\sum\limits_{1\leqslant j\leqslant p}\sum\limits_{  r_1+\dots+r_p=n,\atop
		r_1, \dots, r_p\geqslant 1 }
\end{eqnarray}
$$ \quad  \quad\quad\quad\quad\quad  (-1)^\eta
R_{r_1}\circ\Big(\id^{\ot i}\ot m_p\circ( R_{r_2}\ot   \cdots\ot R_{r_j}\ot\id \ot S_{r_{j+1}}\ot \cdots\ot S_{r_p})\ot \id^{\ot k}\Big),
$$ and
\begin{eqnarray} \label{Eq: homotopy RB-operator-version-2}
	\sum\limits_{ l_1+\dots+l_k=n,\atop
		l_1, \dots, l_k\geqslant 1 } (-1)^{\delta}m_k\circ\Big(S_{l_1}\ot \cdots \ot S_{l_k}\Big)=\sum\limits_{1\leqslant j\leqslant p}\sum\limits_{  r_1+\dots+r_p=n,\atop
		r_1, \dots, r_p\geqslant 1 }
\end{eqnarray}
$$ \quad  \quad\quad\quad\quad\quad  (-1)^\eta
S_{r_1}\circ\Big(\id^{\ot i}\ot m_p\circ( R_{r_2}\ot   \cdots\ot R_{r_j}\ot\id \ot S_{r_{j+1}}\ot \cdots\ot S_{r_p})\ot \id^{\ot k}\Big),
$$
where
\begin{align*}\delta&=\frac{k(k-1)}{2}+\sum_{j=1}^k(k-j)l_j,\\
	\eta&=i+(p+\sum\limits_{j=2}^p(r_j-1))k+\sum_{t=2}^j(r_t-1)+\sum_{t=2}^p(r_t-1)(p-t).
\end{align*}
{Equation~(\ref{Eq: stasheff-id}) is exactly the Stasheff identity   defining  $A_\infty$-algebras \cite{Sta63}. In particular,  the operator $m_1$ is a differential on $V$  and the operator $m_2$ induces an associative algebra structure on the homology   $\rmH_\bullet(V, m_1)$.

We obtain thus the following equivalent definition of  homotopy Rota-Baxter  system.

\begin{defn}
	Let $V$ be a graded space. A homotopy Rota-Baxter  system on $V$ consists of three families of graded maps
	$ \{m_n: V^{\ot n}\rightarrow V \}_{n\geqslant 1}$  and $\{R_n,S_n: V^{\ot n}\rightarrow V\}_{n\geqslant 1}$  with $|m_n|=n-2,   |R_n|=n-1, |S_n|=n-1$  that subject to
	Equations~\eqref{Eq: stasheff-id}-\eqref{Eq: homotopy RB-operator-version-2}. 
	
	The graded operations $\big(\{R_n\}_{n\geq1}, \{S_n\}_{n\geq 1}\big)$ are called a homotopy coupled Rota-Baxter operator on the $A_\infty$-algebra $(V,\{m_n\}_{n\geq1})$.
\end{defn}
}


\begin{exam}
	Expanding  Equation~\eqref{Eq: homotopy RB-operator-version-2}  for small  $n$'s gives the following:
	\begin{itemize}
		\item[(i)]
		When $n=1$, $|R_1|=|S_1|=0$, $$  m_1\circ R_1=R_1\circ m_1,$$ and $$  m_1\circ S_1=S_1\circ m_1,$$
		which implies that $R_1,S_1: (V, m_1)\to (V, m_1)$ are  chain maps;
		
		\item[(ii)]
		when $n=2$,  $|R_2|=|S_2|=1$,
		$$\begin{array}{ll}  &m_2\circ(R_1\ot R_1)-R_1\circ m_2\circ (\id\ot S_1)-R_1\circ m_2\circ (R_1\ot \id) \\
			=&-\partial(R_2)= -\big(m_1\circ R_2+R_2\circ (\id\ot m_1)+R_2\circ(m_1\ot \id)\big),
		\end{array}$$
		and
		
		$$\begin{array}{ll}  &m_2\circ(S_1\ot S_1)-S_1\circ m_2\circ (\id\ot S_1)-S_1\circ m_2\circ (R_1\ot \id) \\
			=&-\partial(S_2)= -\big(m_1\circ S_2+S_2\circ (\id\ot m_1)+S_2\circ(m_1\ot \id)\big),
		\end{array}$$
	{	which 	show that $(R_1, S_1)$ is  coupled Rota-Baxter operator with respect to $m_2$, but up to homotopy provided by the pair of operators $(R_2, S_2)$.}

	\end{itemize}
	Observe that for a homotopy Rota-Baxter   algebra $(V;  \{m_n\}_{n\geqslant 1}, \{R_n\}_{n\geqslant 1}, \{S_n\}_{n\geqslant 1})$,  its homology $\rmH_\bullet(V, m_1)$ endowed with the operators induced by $m_2$, $ R_1$ and  $ S_1$ is  a usual  Rota-Baxter system.
\end{exam}

In particular, if the underlying $A_\infty$-algebra $(V,\{m_n\}_{n\geqslant 1})$ in the definition of homotopy Rota-Baxter system is just a differential graded algebra, i.e., $m_n=0$ for all $n\geqslant 3$,  Equation~\eqref{Eq: homotopy RB-operator-version-1} and Equation~\eqref{Eq: homotopy RB-operator-version-2} will degenerate into the following forms:
\begin{align}\label{Eq: homotopy RB-operator-version-3}&\\
	\nonumber&m_1\circ R_n+\sum_{i+j=n}(-1)^{i+1}m_2(R_{i}\ot R_j)\\
	\nonumber =&\sum_{0\leqslant i\leqslant p-1}\sum_{p+q=n}(-1)^{i+(q-1)(p-i-1)}R_p\circ\big(\id^{\ot i}\ot m_2\circ (R_q\ot\id)\ot \id^{\ot p-i-1}\big)\\
	&\nonumber +(-1)^{n-1}\sum_{i=0}^{n-1}R_n(\id^{\ot i}\ot m_1\ot \id^{\ot n-i-1})+\sum_{0\leqslant i\leqslant p-1}\sum_{p+q=n}(-1)^{i+(q-1)(p-i-1)}R_p\circ\big(\id^{\ot i}\ot m_2\circ (\id\ot S_q)\ot \id^{\ot p-i-1}\big)
	\end{align}
	\begin{align}\label{Eq: homotopy RB-operator-version-4}
		&\\
		\nonumber &m_1\circ S_n+\sum_{i+j=n}(-1)^{i+1}m_2(S_{i}\ot S_j)\\
		\nonumber =&\sum_{0\leqslant i\leqslant p-1}\sum_{p+q=n}(-1)^{i+(q-1)(p-i)}S_p\circ\big(\id^{\ot i}\ot m_2\circ (R_q\ot\id)\ot \id^{\ot p-i-1}\big)\\
		&\nonumber +(-1)^{n-1}\sum_{i=0}^{n-1}S_n(\id^{\ot i}\ot m_1\ot \id^{\ot n-i-1})+\sum_{0\leqslant i\leqslant p-1}\sum_{p+q=n}(-1)^{i+(q-1)(p-i)}S_p\circ\big(\id^{\ot i}\ot m_2\circ (\id\ot S_q)\ot \id^{\ot p-i-1}\big).
	\end{align}


\subsection{Associative infinity-Yang-Baxter pairs}\

In this subsection, we firstly introduce the notion of  associative  infinity-Yang-Baxter pairs, which can be considered as the higher version of associative Yang-Baxter  pairs (Definition~\ref{definition-AYBEP}). Then, we will give an infinity version of Theorem~\ref{YBP and RBS}.

\begin{defn}\label{Def:associative Yang-Baxter infinty pairs}
	Let $A$ be a unitary graded algebra.   An associative infinity-Yang-Baxter pair  consists of two families of elements $(\{r_n\}_{n\geq1},\{s_n\}_{n\geq1})$ with $r_1=s_1$, $|r_n|=|s_n|=n-2$ for $n\geq1$
	 $$r_n=\sum r^{[1]}\otimes\cdots\otimes r^{[n]},s_n=\sum s^{[1]}\otimes\cdots\otimes s^{[n]}\in A^{\otimes n}, $$
	that satisfy	the following equations: for $n\geq 0$
	\begin{eqnarray}  \label{Eq: homotopy AYBE-version-1}
		 &&\sum_{i+j=n,\atop i,j\geq 1}(-1)^{1+i}r_{i+1}^{1,\ldots,i+1}\cdot r_{j+1}^{i+1,\ldots,n} \\
 \nonumber&=&\sum_{i+j=n,\atop i,j\geq 0}\sum_{s=1}^{i+1} (-1)^{s-1+(j-1)(n-s-j-1)}\left( r_{i+1}^{1,\ldots,s,s+j+1,\ldots,n+1}\cdot r_{j+1}^{s,\ldots,s+j} +  (-1)^{(1-j)i} s_{j+1}^{s+1,\ldots,s+j+1}\cdot r_{i+1}^{1,\ldots,s,s+j+1,\ldots,n+1}\right),
\end{eqnarray}
		\begin{eqnarray}
 \label{Eq: homotopy AYBE-version-2}
&&\sum_{i+j=n,\atop i,j\geq 1}(-1)^{1+i}s_{i+1}^{1,\ldots,i+1}\cdot s_{j+1}^{i+1,\ldots,n}\\
\nonumber &=&\sum_{i+j=n,\atop i,j\geq 0}\sum_{s=1}^{i+1} (-1)^{s-1+(j-1)(n-s-j-1)}\left( s_{i+1}^{1,\ldots,s,s+j+1,\ldots,n+1}\cdot r_{j+1}^{s,\ldots,s+j} +  (-1)^{(1-j)i}s_{j+1}^{s+1,\ldots,s+j+1}  \cdot  s_{i+1}^{1,\ldots,s,s+j+1,\ldots,n+1} \right),
 \end{eqnarray}
	where  $$r_{m}^{k_1+1,k_1+k_2+2,\ldots ,\sum_{s=1}^m k_s+m}=\sum 1^{\otimes k_1}\otimes r^{[1]}\otimes1^{\otimes k_2}\otimes\cdots\otimes1^{\otimes k_m}\otimes r^{[m]}\otimes1^{\otimes m+1}$$ and $$s_{m}^{k_1+1,k_1+k_2+2,\ldots ,\sum_{s=1}^m k_s+m}=\sum 1^{\otimes k_1}\otimes s^{[1]}\otimes1^{\otimes k_2}\otimes \cdots\otimes1^{\otimes k_m}\otimes s^{[m]}\otimes1^{\otimes m+1}, $$ for $\sum_{s=1}^m k_s+m=n+1$, $m\geq1$  and $k_1\ldots,k_{m+1}\geq0$ .
\end{defn}

	 For small integers  $n$ , the associative  infinity-Yang-Baxter pair has the following forms:
	 \begin{itemize}
	 	\item [(i)]  When $n=0$, $r_1=s_1$ and $|r_1|=|s_1|=-1$. Denote $d=r_1=s_1$. Equation~\eqref{Eq: homotopy AYBE-version-1} and Equation~\eqref{Eq: homotopy AYBE-version-2} both read as $d\cdot d=0$, which implies that the operator $ \partial=[-,d]:A\rightarrow A$ is a square-zero derivation on $A$. Thus $(A,\partial)$ is a dg algebra.
	 	\item [(ii)]    when $n=1$, $|r_1|=|s_1|=-1$, $|r_2|=|s_2|=0$,
	 	\[  \partial_{A\otimes A} (r_2)=-\left( (r_1^1+r_1^2) r_2^{12}-r_2^{12}(r_1^1+r_1^2)\right) =0,\]
	 	and
	 	\[\partial_{A\otimes A} (s_2)= -\left( (s_1^1+s_1^2) s_2^{12}-s_2^{12}(s_1^1+s_1^2)\right) =0,\]
	 	which shows that $r_2,s_2\in A\otimes A$ are closed cycles.
	 	\item  [(iii)] when $n=2$, 	$|r_1|=|s_1|=-1$, $|r_2|=|s_2|=0$,	$|r_3|=|s_3|=1$,
	 	
	 		$$\begin{array}{ll}  & r_2^{13} r_2^{12}-r_2^{12} r_2^{23}+s_2^{23} r_2^{13} \\
	 		=& -\big((s_1^1+s_2^2+s_2^3)r_3^{123}+r_3^{123}(r_1^1+r_2^2+r_2^3)\big)=\partial_{A^{\otimes 3}}(r_3),
	 	\end{array}$$
	 	and
	 		$$\begin{array}{ll}  & s_2^{13} r_2^{12}-s_2^{12} s_2^{23}+s_2^{23} s_2^{13} \\
	 		=& -\big((s_1^1+s_2^2+s_2^3)s_3^{123}+s_3^{123}(r_1^1+r_2^2+r_2^3)\big)=\partial_{A^{\otimes 3}}(s_3),
	 	\end{array}$$
 	which 	shows that $(r_2,s_2)$ is a  usual associative Yang-Baxter pair (Definition~\ref{definition-AYBEP}), but up to homotopy given by the elements $r_3$ and $s_3$.
	 \end{itemize}

\medskip
Next, we will give  infinity analogies for Proposition~\ref{prop-from YBEP to RBS} and Theorem~\ref{YBP and RBS}.

\begin{prop}\label{prop-from HYBEP to HRBS}
	 Let $(\{r_n\}_{n\geq1},\{s_n\}_{n\geq1})$ be an associative infinity-Yang-Baxter pair  in   unitary graded algebra $(A,m_2=\cdot)$. Let  $r_1=s_1=d\in A$. We  define a differential $m_1$ on $A$ and   operators  $ \mathscr{F}(   r_{n+1} )  ,  \mathscr{F} (s_{n+1} )   : A^{\otimes n}\rightarrow A$, for $n\geq1$, as: for $x,x_1,\ldots,x_n\in A$
	 \[m_1(x)=-[d,x]=-d\cdot x +(-1)^{|x|}x\cdot d,\]
	 \[  \mathscr{F}( r_{n+1})  (x_1,\ldots,x_n)=\sum(-1)^{ \sum\limits_{k=1}^{n}\sum\limits_{j=k+1}^{n+1}|x_k||r^{[j]}|}r^{[1]}\cdot x_1\cdot r^{[2]}\cdots r^{[n]}\cdot x_n\cdot r^{[n+1]},\] 	
	 \[   \mathscr{F}(s_{n+1} ) (x_1,\ldots,x_n)=\sum(-1)^{ \sum\limits_{k=1}^{n}\sum\limits_{j=k+1}^{n+1}|x_k||s^{[j]}|}s^{[1]}\cdot x_1\cdot s^{[2]}\cdots s^{[n]}\cdot x_n\cdot s^{[n+1]}.\]
	 Then, $(A,m_1,m_2)$ is a dg algebra, and  $(\{  \mathscr{F}(r_{n+1})  \}_{n\geq1}, \{  \mathscr{F}(s_{n+1}) \}_{n\geq 1}$) is homotopy   coupled Rota-Baxter operator on $(A,m_1,m_2)$.
\end{prop}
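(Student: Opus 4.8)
The statement asserts two things, which I would verify in turn: that $(A,m_1,m_2)$ is a differential graded algebra, and that the families $R_n:=\mathscr{F}(r_{n+1})$, $S_n:=\mathscr{F}(s_{n+1})$ (for $n\geq 1$) form a homotopy coupled Rota-Baxter operator on it. Because the underlying $A_\infty$-structure here is an ordinary dg algebra, i.e. $m_n=0$ for $n\geq 3$, the operator identities to be checked are precisely the degenerate equations \eqref{Eq: homotopy RB-operator-version-3} and \eqref{Eq: homotopy RB-operator-version-4}, while the Stasheff identities \eqref{Eq: stasheff-id} reduce to the dg algebra axioms.

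For the dg algebra claim I would use the case $n=0$ of the hypotheses, which by the computation in item (i) following Definition~\ref{Def:associative Yang-Baxter infinty pairs} reads $d\cdot d=0$. As $|d|=-1$ is odd, the graded commutator gives $[d,d]=2\,d\cdot d=0$, whence $m_1^2=(-[d,-])\circ(-[d,-])=[d,[d,-]]=\tfrac12[[d,d],-]=0$ by the graded Jacobi identity; moreover $m_1=-[d,-]$ is an inner derivation and so obeys the graded Leibniz rule for $m_2$. Thus $(A,m_1,m_2)$ is a dg algebra, with no sign subtleties.

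The core of the argument is the observation, already visible in the proof of Proposition~\ref{prop-from YBEP to RBS}, that $\mathscr{F}$ intertwines the tensor-level products of the Yang-Baxter side with the operadic composites of the Rota-Baxter side. I would isolate two building blocks. First, the ``overlap product'' $r_{i+1}^{1,\dots,i+1}\cdot r_{j+1}^{i+1,\dots,n+1}$, in which the two tensors are glued along a single shared slot, satisfies $\mathscr{F}(r_{i+1}^{1,\dots,i+1}\cdot r_{j+1}^{i+1,\dots,n+1})=\pm\, m_2\circ(R_i\otimes R_j)$; this produces the left-hand side of \eqref{Eq: homotopy RB-operator-version-3}. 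Second, the ``insertion product'' $r_{i+1}^{1,\dots,s,s+j+1,\dots,n+1}\cdot r_{j+1}^{s,\dots,s+j}$, in which $r_{j+1}$ is nested inside the $s$-th slot of $r_{i+1}$, satisfies $\mathscr{F}\bigl(r_{i+1}^{1,\dots,s,s+j+1,\dots,n+1}\cdot r_{j+1}^{s,\dots,s+j}\bigr)=\pm\, R_i\circ(\id^{\otimes(s-1)}\otimes m_2\circ(R_j\otimes\id)\otimes\id^{\otimes(i-s)})$, while the companion term $s_{j+1}^{s+1,\dots,s+j+1}\cdot r_{i+1}^{1,\dots}$ produces the composite $R_i\circ(\id\otimes m_2\circ(\id\otimes S_j)\otimes\id)$; these account for the two sums on the right-hand side of \eqref{Eq: homotopy RB-operator-version-3}. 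Each identity is a direct unravelling of the definition of $\mathscr{F}$: merging two tensors at a slot becomes multiplication at the output, and nesting becomes composition at the corresponding input.

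It remains to locate the differential terms. In the Yang-Baxter equations the extra index values $i=0$ and $j=0$ (allowed on the right-hand side but not the left) force one factor to be $r_1=s_1=d$; under $\mathscr{F}$, and via $m_1=-[d,-]$, the two boundary contributions combine into exactly $m_1\circ R_n$ on the one hand and the sum $(-1)^{n-1}\sum_i R_n\circ(\id^{\otimes i}\otimes m_1\otimes\id^{\otimes n-i-1})$ on the other, matching \eqref{Eq: homotopy RB-operator-version-3}. Applying $\mathscr{F}$ term by term to each side of \eqref{Eq: homotopy AYBE-version-1} at parameter $n$ then yields \eqref{Eq: homotopy RB-operator-version-3}, and identically \eqref{Eq: homotopy AYBE-version-2} yields \eqref{Eq: homotopy RB-operator-version-4}, with the $s$-labelled factors now playing the role of the outer operator. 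I expect the only genuinely delicate point to be the sign bookkeeping: $\mathscr{F}$ carries its own Koszul sign $(-1)^{\sum_{k}\sum_{j>k}|x_k||r^{[j]}|}$, and one must verify that, slot by slot along a general insertion, it combines with the exponents in \eqref{Eq: homotopy AYBE-version-1}--\eqref{Eq: homotopy AYBE-version-2} to reproduce precisely the exponents $\delta$ and $\eta$ of the Rota-Baxter system identities. The matching of the indexing sets, by contrast, is mechanical.
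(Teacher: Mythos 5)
Your proposal follows essentially the same route as the paper's proof: the paper likewise derives the dg algebra structure from the $n=0$ case ($d\cdot d=0$), then establishes four identities under $\mathscr{F}$ — one matching the boundary/differential terms $m_1\circ \mathscr{F}(r_{n+1})$ and $\mathscr{F}(r_{n+1})\circ(\id^{\ot s}\ot m_1\ot\id^{\ot k})$ with the products involving $r_1=s_1=d$, one for the overlap products giving $m_2\circ(\mathscr{F}(r_{i+1})\ot\mathscr{F}(r_{j+1}))$, and two for the insertion products giving the composites with $m_2\circ(\mathscr{F}(r_{j+1})\ot\id)$ and $m_2\circ(\id\ot\mathscr{F}(s_{j+1}))$ — and concludes by summing them and invoking Equation~\eqref{Eq: homotopy AYBE-version-1}. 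Your identification of the overlap, insertion, and boundary terms corresponds exactly to the paper's Equations~\eqref{Equation: Equivalence-YB-RBS-1}--\eqref{Equation: Equivalence-YB-RBS-4}, so the proposal is correct and structurally identical to the published argument.
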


\begin{proof}
	We have already seen that  $d\cdot d=0$, the graded algebra $(A,m_1,m_2)$ is a dg algebra. For $n\geq 1$ and $ x_1,\ldots,x_n\in A$,
	\begin{eqnarray*}
		&&\left( 	m_1\circ   \mathscr{F}(r_{n+1} ) -\sum_{s+k+1=n} (-1)^{n-1}  \mathscr{F}(r_{n+1} ) \circ\left( \id^{\ot s}\otimes  m_1\otimes\id^{\ot k}\right) \right)(x_1,\ldots,x_n)\\
		= &&-\sum\limits_{[1],\ldots,[n+1]}(-1)^{ \sum\limits_{k=1}^{n}\sum\limits_{j=k+1}^{n+1}|x_k||r^{[j]}|}  d\cdot r^{[1]}\cdot x_1 \cdots   x_n\cdot r^{[n+1]}\\
		&&+ \sum\limits_{[1],\ldots,[n+1]}(-1)^{ \sum\limits_{k=1}^{n}\sum\limits_{j=k+1}^{n+1}|x_k|(|r^{[j]}|+1)+(n-1)} r^{[1]}\cdot x_1  \cdots   x_n\cdot r^{[n+1]}\cdot d \\
		&& +\sum\limits_{[1],\ldots,[n+1]}\sum\limits_{s=1}^{n}(-1)^{n-1+\sum\limits_{k=1}^{n}\sum\limits_{j=k+1}^{n+1}|x_k||r^{[j]}|+\sum\limits_{k=1}^s|r^{[j]}|+\sum\limits_{k=1}^{s-1}|x_j| }r^{[1]}\cdot x_1  \cdots  r^{[s]}\cdot d\cdot  x_s\cdot r^{[s+1]}\cdots x_n\cdot r^{[n+1]}\\
		&&-\sum\limits_{[1],\ldots,[n+1]}\sum\limits_{s=1}^{n}(-1)^{n-1+\sum\limits_{k=1}^{n}\sum\limits_{j=k+1}^{n+1}|x_k||r^{[j]}|+\sum\limits_{k=1}^s|r^{[j]}||x_j| }r^{[1]}\cdot x_1  \cdots  r^{[s]}\cdot  x_s\cdot d\cdot r^{[s+1]}\cdots x_n\cdot r^{[n+1]}\\
		= &&-\sum\limits_{[1],\ldots,[n+1]}(-1)^{ \sum\limits_{k=1}^{n}\sum\limits_{j=k+1}^{n+1}|x_k||r^{[j]}|}  s_1\cdot r^{[1]}\cdot x_1 \cdots   x_n\cdot r^{[n+1]}\\
		&&+ \sum\limits_{[1],\ldots,[n+1]}(-1)^{ \sum\limits\limits_{k=1}^{n}\sum\limits_{j=k+1}^{n+1}|x_k|(|r^{[j]}|+1)+  (n-1)} r^{[1]}\cdot x_1  \cdots   x_n\cdot r^{[n+1]}\cdot r_1 \\
		 && +\sum\limits_{[1],\ldots,[n+1]}\sum\limits_{s=1}^{n}(-1)^{n-1+\sum\limits_{k=1}^{n}\sum\limits_{j=k+1}^{n+1}|x_k||r^{[j]}|+\sum\limits_{k=1}^s|r^{[j]}|+\sum\limits_{k=1}^{s-1}|x_j| }r^{[1]}\cdot x_1  \cdots  r^{[s]}\cdot r_1\cdot  x_s\cdot r^{[s+1]}\cdots x_n\cdot r^{[n+1]}\\
		 &&-\sum\limits_{[1],\ldots,[n+1]}\sum\limits_{s=1}^{n}(-1)^{n-1+\sum\limits_{k=1}^{n}\sum\limits_{j=k+1}^{n+1}|x_k||r^{[j]}|+\sum\limits_{k=1}^s(  |r^{[j]}||x_j|)  }  r^{[1]} \cdot x_1  \cdots  r^{[s]}\cdot  x_s\cdot s_1\cdot r^{[s+1]}\cdots x_n\cdot r^{[n+1]}\\
		 =&& -\sum\limits_{k=1}^{n+1}\mathscr{F}\left(     s_1^{k} \cdot r_{n+1}^{1,\ldots,n+1}     - (-1)^{n-1}  r_{n+1}^{1,\ldots,n+1}\cdot r_1^{k}    \right) (x_1,\ldots,x_n).
	\end{eqnarray*}
	Thus we have that the following identity holds:
		\begin{eqnarray}&&\label{Equation: Equivalence-YB-RBS-1}\\
				\nonumber &&m_1\circ   \mathscr{F}(r_{n+1} ) -\sum_{s+k+1=n} (-1)^{n-1} \mathscr{F}( r_{n+1} ) \circ\left( \id^{\ot s}\otimes  m_1\otimes\id^{\ot k}\right)=-\sum\limits_{k=1}^{n+1}\mathscr{F}\left(    s_1^{k} \cdot r_{n+1}^{1,\ldots,n+1}     - (-1)^{n-1}  r_{n+1}^{1,\ldots,n+1}\cdot r_1^{k}   \right).
			\end{eqnarray}
Similarly, one can also obtain the following identities:
\begin{eqnarray}\label{Equation: Equivalence-YB-RBS-2}\sum_{i+j=n,\atop i,j\geq 1}(-1)^{1+i} m_2\circ\Big(\mathscr{F} (r_{i+1}) \ot   \mathscr{F}(r_{j+1}) \Big)&=&\sum_{i+j=n,\atop i,j\geq 1}(-1)^{1+i} \mathscr{F}(r_{i+1}^{1,\ldots,i+1}\cdot r_{j+1}^{i+1,\ldots,n}) ,
	\end{eqnarray}
\begin{eqnarray}\label{Equation: Equivalence-YB-RBS-3}
	&& \sum_{s+k+j+1=n} (-1)^{s+(j-1)(k+1)}   \mathscr{F}(r_{i+1}) \left( \id^{\ot s} \ot m_2\circ(  \mathscr{F}(r_{i+1}) \otimes \id)\ot\id^{\ot k}\right)\\
\nonumber	&=& \sum_{i+j=n,\atop i,j\geq 1}\sum_{s=1}^{i+1} (-1)^{s-1+(j-1)(n-s-j-1)}   \mathscr{F}(r_{i+1}^{1,\ldots,s,s+j+1,\ldots,n+1}\cdot r_{j+1}^{s,\ldots,s+j}) ,
\end{eqnarray}
\begin{eqnarray}\label{Equation: Equivalence-YB-RBS-4}
	 &&\sum_{s+k+j+1=n} (-1)^{s+(j-1)k}  \mathscr{F}(r_{i+1}) \left( \id^{\ot s} \ot m_2\circ( \id\otimes  \mathscr{F}(s_{j+1}) )\ot\id^{\ot k}\right)\\
	 \nonumber &=&\sum_{i+j=n,\atop i,j\geq 1}\sum_{s=1}^{i+1} (-1)^{s-1+(j-1)(s+1)}   \mathscr{F}(s_{j+1}^{s+1,\ldots,s+j+1} \cdot  r_{i+1}^{1,\ldots,s,s+j+1,\ldots,n+1}) .
\end{eqnarray}
By summing Equations~\eqref{Equation: Equivalence-YB-RBS-1}-\eqref{Equation: Equivalence-YB-RBS-4}, one can find that the right-hand side equals $0$ since $(\{r_n\}_{n\geq1},\{s_n\}_{n\geq1})$ satisfies Equation~\eqref{Eq: homotopy AYBE-version-1}. Thus we have that $\{  \mathscr{F}(r_{n+1})  \}_{n\geq1}$ and $\{ \mathscr{F} (s_{n+1})  \}_{n\geq 1}$ satisfy Equation~\eqref{Eq: homotopy RB-operator-version-3}.  Similarly, it can be checked that $\{ \mathscr{F}(r_{n+1})  \}_{n\geq1}$ and $\{ \mathscr{F} (s_{n+1})  \}_{n\geq 1}$ satisfy the Equation~\eqref{Eq: homotopy RB-operator-version-4}.  In conclusion,   $(\{ \mathscr{F} (r_{n+1} ) \}_{n\geq1}, \{  \mathscr{F}(s_{n+1})  \}_{n\geq 1})$ is a  homotopy coupled Rota-Baxter operator  on $(A,m_1,m_2)$.
\end{proof}

\begin{thm}\label{HYBP and HRBS}
	Let $V$ be a finite dimensional graded space. Let $(\End(V),m_2=\cdot)$ be the endomorphism graded algebra of $V$ with composition as the multiplication.
	The map $$\chi:(r_1=s_1=d_V,\{r_n\}_{n\geq2},\{s_n\}_{n\geq2})\rightarrow (m_1=-[d_V,-],\{  \mathscr{F}(r_{n})  \}_{n\geq2}, \{ \mathscr{F}( s_{n})  \}_{n\geq 2})$$ is a bijection between  the set of associative infinity-Yang-Baxter pairs in graded algebra $(\End(V),m_2)$  and the set of triples $(m_1,\{R_n\}_{n\geqslant 1}, \{S_n\}_{n\geqslant 1})$ in which $m_1$ is a differential on the graded algebra $(\End(V),m_2)$, and   $(\{R_n\}_{n\geqslant 1},\{S_n\}_{n\geqslant 1})$ is a homotopy   coupled Rota-Baxter operator on the dg  $(\End(V),m_1,m_2)$.
	
\end{thm}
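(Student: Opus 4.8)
The plan is to mirror the proof of Theorem~\ref{YBP and RBS}, upgrading the one-directional Proposition~\ref{prop-from HYBEP to HRBS} to an equivalence by exploiting that over the finite-dimensional endomorphism algebra the map $\mathscr{F}$ is an \emph{isomorphism}. Concretely, I would first observe that $\chi$ is a bijection on the level of the underlying data, and then show that the defining relations on the two sides correspond to one another under $\mathscr{F}$; since $\mathscr{F}$ is then injective, a relation holds on the operator side if and only if its $\mathscr{F}$-preimage holds on the tensor side, which yields the desired bijection between the two solution sets.

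For the bijection on data: for each $n\geq 2$, Lemma~\ref{[] iso} gives that $\mathscr{F}\colon \End(V)^{\ot n}\to \Hom(\End(V)^{\ot (n-1)},\End(V))$ is an isomorphism, so $r_n\mapsto \mathscr{F}(r_n)$ and $s_n\mapsto \mathscr{F}(s_n)$ are bijections identifying $\{r_n\}_{n\geq 2}$ and $\{s_n\}_{n\geq 2}$ with the higher operators $\{R_n\}_{n\geq 1}$ and $\{S_n\}_{n\geq 1}$ under $R_{n-1}=\mathscr{F}(r_n)$, $S_{n-1}=\mathscr{F}(s_n)$. The first component $d_V=r_1=s_1\mapsto m_1=-[d_V,-]$ must be treated separately. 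Here I would use that $\End(V)$ is a (graded) matrix algebra: every graded derivation of it is inner, and its graded center is $\bfk\,\id$ concentrated in degree $0$. Consequently $d\mapsto -[d,-]$ is a linear bijection from $\End(V)_{-1}$ onto the space of degree $-1$ derivations of $(\End(V),m_2)$, and a direct computation gives $m_1^2=0$ if and only if $d_V^2=0$. Thus the square-zero degree $-1$ elements $d_V$ (the $n=0$ constraint of an infinity-Yang-Baxter pair) correspond bijectively to the differentials $m_1$ on $(\End(V),m_2)$.

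For the equivalence of the relations I would invoke the computation already carried out in Proposition~\ref{prop-from HYBEP to HRBS}: the identities \eqref{Equation: Equivalence-YB-RBS-1}--\eqref{Equation: Equivalence-YB-RBS-4} exhibit each summand of the homotopy Rota-Baxter operator relation \eqref{Eq: homotopy RB-operator-version-3} as $\mathscr{F}$ applied to the corresponding summand of the infinity-Yang-Baxter relation \eqref{Eq: homotopy AYBE-version-1}, so that the entire relation \eqref{Eq: homotopy RB-operator-version-3} equals $\mathscr{F}$ applied to the entire relation \eqref{Eq: homotopy AYBE-version-1}. Over $A=\End(V)$ the map $\mathscr{F}$ is injective by Lemma~\ref{[] iso}, whence \eqref{Eq: homotopy RB-operator-version-3} holds if and only if \eqref{Eq: homotopy AYBE-version-1} holds; the same argument with the $s$-identities relates \eqref{Eq: homotopy RB-operator-version-4} and \eqref{Eq: homotopy AYBE-version-2}. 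This promotes Proposition~\ref{prop-from HYBEP to HRBS} to a two-sided statement and simultaneously furnishes well-definedness (forward direction) and surjectivity (backward direction) of $\chi$ onto the prescribed target set, completing the proof that $\chi$ is a bijection.

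The step I expect to be the main obstacle is the first component, namely verifying that $m_1=-[d_V,-]$ realizes a bijection onto the differentials: one must check that \emph{all} degree $-1$ differentials of the graded algebra $\End(V)$ are inner (the graded analogue of $\mathrm{HH}^1(\End(V),\End(V))=0$) and that $d_V$ is uniquely recoverable (triviality of the graded center in degree $-1$). The equation-matching, by contrast, is essentially inherited from Proposition~\ref{prop-from HYBEP to HRBS}; the only genuinely new input there is the injectivity of $\mathscr{F}$, which supplies the converse direction that a general algebra $A$ does not provide.
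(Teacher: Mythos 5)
Your proposal follows essentially the same route as the paper: Lemma~\ref{[] iso} identifies the higher operators $R_n,S_n$ with tensors $r_{n+1},s_{n+1}$ via $\mathscr{F}$, and the intertwining identities \eqref{Equation: Equivalence-YB-RBS-1}--\eqref{Equation: Equivalence-YB-RBS-4} from Proposition~\ref{prop-from HYBEP to HRBS} combined with the injectivity of $\mathscr{F}$ give the equivalence of the defining relations. The only difference is that you explicitly justify that $d_V\mapsto -[d_V,-]$ is a bijection onto all degree $-1$ differentials of $(\End(V),m_2)$ (innerness of graded derivations and triviality of the graded center), a point on which the paper's proof is silent; this is a welcome refinement rather than a genuinely different argument.
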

\begin{proof}
	For $n=0$, the equations of associative Yang-Baxter pair \eqref{Eq: homotopy RB-operator-version-3} \eqref{Eq: homotopy RB-operator-version-4}) are equal to $d_V^2=0$. Thus, $(V,d_V)$ is a complex and its endomorphism graded algebra $(\End(V),m_2)$ is a finite dimensional dg algebra with a differential $m_1$ defined by
	$$ m_1(x)=-d_V\cdot x+(-1)^{|x|}x\cdot d_V,\text{ for }x\in \End(V).$$
	Thus, $(\End(V),m_1=-[d_V,-],m_2)$ is a dg algebra if and only if the defining equations of associative Yang-Baxter pairs \eqref{Eq: homotopy RB-operator-version-3}\eqref{Eq: homotopy RB-operator-version-4}  hold at $n=0$.

	
The operators  $R_n,S_n:(\End(V))^{\otimes n}\rightarrow \End(V), n\geq1,$ can be uniquely written as:

	$$ R_n(e_{p_1} ^{~q_1}\ot\cdots\ot e_{p_n} ^{~q_n})=\sum_{1\leq q_0,p_{n+1}\leq n}(-1)^{ \sum_{k=1}^{n}\sum_{j=k}^{n}|e_{p_k} ^{~q_k}||e_{q_j} ^{~p_{j+1}}|} r_{p_1,\ldots,p_n,p_{n+1}}^{q_0,q_1,\ldots,q_n}e_{q_0} ^{~p_{n+1}},$$   and   $$S_n(e_{p_1} ^{~q_1}\ot\cdots\ot e_{p_n} ^{~q_n})=\sum_{1\leq q_0,p_{n+1}\leq n}(-1)^{ \sum_{k=1}^{n}\sum_{j=k}^{n}|e_{p_k} ^{~q_k}||e_{q_j} ^{~p_{j+1}}|} s_{p_1,\ldots,p_n,p_{n+1}}^{q_0,q_1,\ldots,q_n}e_{q_0} ^{~p_{n+1}}$$
		with coefficients $r_{p_1,\ldots,p_n,l}^{q_0,q_1,\ldots,q_n},s_{p_1,\ldots,p_n,l}^{q_0,q_1,\ldots,q_n}\in \textbf{k}$. Denote $r_{n+1}=\sum r_{p_1,\ldots,p_n,l}^{q_0,q_1,\ldots,q_n}e_{q_0} ^{~p_1}\ot\cdots\ot e_{q_n} ^{~p_{n+1}}$ and $s_{n+1}=\sum s_{p_1,\ldots,p_n,l}^{q_0,q_1,\ldots,q_n}e_{q_0} ^{~p_1}\ot\cdots\ot e_{q_n} ^{~p_{n+1}}$, for all $n\geq1$. We have that $R_n=\mathscr{F} (r_{n+1}) $ and $S_n= \mathscr{F}(s_{n+1}) $, for all $n\geq1$.

By Lemma~\ref{[] iso} and Proposition~\ref{prop-from HYBEP to HRBS},   $(\{r_n\}_{n\geq1},\{s_n\}_{n\geq1})$ is an associative infinity-Yang-Baxter pair if and only if  operators $\{ \mathscr{F}(r_{n+1}) \}_{n\geq1}$ and $\{ \mathscr{F}(s_{n+1}) \}_{n\geq 1}$  define a  homotopy   Rota-Baxter system  on $(\End(V),m_1,m_2)$ with $m_1=-[d_V,-]$.

\end{proof}

\begin{remark}
In \cite{Sch09}, Schedler introduces the concept of associative infinity Yang-Baxter equations, which can be viewed as the infinity version of the skew-symmetric associative Yang-Baxter equations. In Definition~\ref{Def:associative Yang-Baxter infinty pairs}, we establish the notion of infinity versions of associative Yang-Baxter pairs, known as associative infinity-Yang-Baxter pairs. Moreover, if $\{r_n\}_{n\geq1}$ is a skew-symmetric solution of the associative infinity Yang-Baxter equations as defined by Schedler, then $(\{r_n\}_{n\geq1},\{r_n\}_{n\geq1})$ will give an associative infinity-Yang-Baxter pair in our sense.
\end{remark}
	
\bigskip 

\section{From   the minimal model to the   $L_\infty$-algebra governing deformations}\label{Section: Linfinty algebras}


In this section, we will use the minimal model $\RBSinfty$, or more precisely the Koszul dual  homotopy cooperad ${\RBS^{\ac}}$,  to determine the deformation complex as well as the $L_\infty$-algebra structure  on it for Rota-Baxter systems.

For the terminology and basic facts about $L_\infty$-algebras, we refer the reader to \cite[Section 2]{CGWZ24}.

\begin{defn}Let $V$ be a graded space. Introduce an   $L_\infty$-algebra $\frakC_{\RBA}(V)$ associated to $V$ as $\frakC_{\RBA}(V):=\mathbf{Hom}({\RBS^\ac}, \End_V)^{\prod}$.
\end{defn}

Now, let's determine the $L_\infty$-algebra $\frakC_{\RBA}(V)$ explicitly. The sign rules in the homotopy cooperad ${\RBS^\ac}$ are complicated, so we need some transformations.
Explicitly, any $f\in \End_V(n)$ corresponds to an element $\tilde{f}\in\mathbf{Hom}(\cals, \End_{sV})(n)$ which is defined as $\big(\tilde{f}(\delta_n)\big)(sv_1\ot\cdots\ot sv_n)=(-1)^{\sum_{k=1}^{n-1}\sum_{j=1}^k|v_j|}(-1)^{(n-1)|f|}sf(v_1\ot \cdots \ot v_n)$ for any $v_1,\dots, v_n\in V$.

Thus we have the following  isomorphisms of homotopy operads:
\begin{eqnarray*}\mathbf{Hom}\big({\RBS^\ac}, \End_V\big)&\cong& \mathbf{Hom}\big({\RBS^\ac}, \mathbf{Hom}(\cals, \End_{sV})\big)\\
&\cong&\mathbf{Hom}\big({\RBS^\ac}\ot_{\mathrm{H}}\cals,\End_{sV}\big)\\
&=&\mathbf{Hom}\big({\mathscr{S}({\RBS^\ac})}, \End_{sV}\big).
\end{eqnarray*}
We obtain $$\frakC_{\RBA}(V)\cong \mathbf{Hom}\big({\mathscr{S}({\RBS^\ac})}, \End_{sV}\big)^{\prod}.$$ Recall that ${\mathscr{S}({\RBS^\ac})}(n)=\bfk u_n\oplus \bfk v_n\oplus \bfk w_n$ with $|u_n|=0$ and $|v_n|=|w_n|=1$. By definition $$\mathbf{Hom}\big({\mathscr{S}({\RBS^\ac})}, \mathrm{End}_{sV}\big)(n)=\Hom\big(\bfk u_n\oplus \bfk v_n\oplus \bfk w_n, \Hom((sV)^{\ot n},sV)\big).$$ Each $f\in \Hom((sV)^{\ot n},sV)$ determines bijectively a map $\tilde{f}$ in $\Hom(\bfk u_n, \Hom((sV)^{\ot n},sV))$ by imposing  $\tilde{f}(u_n)=f$,  each $g\in \Hom((sV)^{\ot n},V)$ is in bijection with a map $\hat{g}$ in $\Hom(\bfk v_n, \Hom((sV)^{\ot n},sV))$ as $\hat{g}(v_n)=(-1)^{|g|}sg$  and  each $h\in \Hom((sV)^{\ot n},V)$ is in bijection with a map $\hat{h}$ in $\Hom(\bfk w_n, \Hom((sV)^{\ot n},sV))$ as $\hat{h}(v_n)=(-1)^{|h|}sh$.

Denote  $$\frakC_{\Alg}(V)=\prod\limits_{n\geqslant 1}\Hom((sV)^{\ot n},sV),$$
$$ \frakC_{\RBO}(V)= \frakC_{\RBO_R}(V)\oplus\frakC_{\RBO_S}(V) =\prod\limits_{n\geqslant 1}\Hom((sV)^{\ot n},V)\oplus\prod\limits_{n\geqslant 1}\Hom((sV)^{\ot n},V) .$$

In this way, we identify $\frakC_{\RBA}(V)$ with $\frakC_{\Alg}(V)\oplus \frakC_{\RBO}(V)$.
By the general theory of homotopy (co)operads (see \cite[Section 3.1]{CGWZ24}), a direct computation gives the $L_\infty$-algebra structure on $\frakC_{\RBA}(V)$:
\begin{itemize}

\item[(I)] For homogeneous elements $sf, sh\in \mathfrak{C}_{\Alg}(V)$, define $$l_2(sf\ot sh):= [sf, sh]_{G}\in\mathfrak{C}_{\Alg}(V).$$

\item[(II)]	 Let $n\geqslant 1$.  For homogeneous elements $sf\in \Hom((sV)^{\ot n},sV)\subset \mathfrak{C}_{\Alg}(V)$ and $g_1,\dots, g_n\in \frakC_{\RBO_R}(V)$ and $h_1,\dots,h_n\in \frakC_{\RBO_S}(V)$,  $1\leqslant i\leqslant n$, $1< j< n$	define $$l_{n+1}(sf\ot g_1\ot \cdots \ot g_n)\in \frakC_{\RBO_R}(V),\  l_{n+1}(sf\ot h_1\ot \cdots \ot h_n)\in \frakC_{\RBO_S}(V)$$
and
\[l_{n+1}(sf\otimes g_1\otimes \cdots\otimes g_j\otimes h_{j+1}\otimes\cdots\otimes h_n)\in \frakC_{\RBO}(V)\]
 as :
	\begin{eqnarray}&&l_{n+1}(sf\ot g_1\ot \cdots \ot g_n)\\
		&=& \sum_{\sigma\in S_n}(-1)^{\eta_1} (f\circ\{sg_{\sigma(1)}, \ldots , sg_{\sigma(n)}\}-(-1)^{(|g_{\sigma(1)}+1)(|f|+1)}s^{-1}(sg_{\sigma(1)})\{sf\{sg_{\sigma(2)},\ldots,sg_{\sigma(n)},\id\}\})\nonumber\\
		\nonumber &&\\
	&&l_{n+1}(sf\ot h_1\ot \cdots \ot h_n)\\
	&=& \sum_{\sigma\in S_n}(-1)^{\eta_2} (f\circ\{sh_{\sigma(1)}, \ldots , sh_{\sigma(n)}\}-(-1)^{(|h_{\sigma(1)}+1)(|f|+1)}s^{-1}(sh_{\sigma(1)})\{sf\{\id,sh_{\sigma(2)},\ldots,sh_{\sigma(n)}\}\})\nonumber	\\
	\nonumber &&\\
	&& l_{n+1}(sf\otimes g_1\otimes \cdots\otimes g_j\otimes h_{j+1}\otimes\cdots\otimes h_n)\label{L infinity O}\\ &=&\underbrace{\sum_{\sigma'\in S_j, \sigma''\in S_{n-j}} (-1)^{\eta_3}s^{-1}(sg_{\sigma'(1)})\big\{sf\big\{sg_{\sigma'(2)},\dots,sg_{\sigma'(j)},\id,sh_{j+\sigma''(1)},\dots,sh_{j+\sigma''(n-j)}\big\}\big\}}_{\mathrm{(A)}}\nonumber \\
		&&\nonumber\ \   +\underbrace{\sum_{\sigma'\in S_j, \sigma''\in S_{n-j}} (-1)^{\eta_4}s^{-1}(sh_{j+\sigma''(1)})\big\{sf\big\{sg_{\sigma'(1)},\dots,sg_{\sigma'(j)},\id,sh_{j+\sigma''(2)},\dots,sh_{j+\sigma''(n-j)}\big\}\big\}}_{ \mathrm{(B)}},	
\end{eqnarray}
where
$$(-1)^{\eta_1}=\chi(\sigma; g_1,\dots,g_n)(-1)^{n(|f|+1)+\sum\limits_{k=1}^{n-1}\sum\limits_{j=1}^k|g_{\sigma(j)}|},$$
$$(-1)^{\eta_1}=\chi(\sigma; h_1,\dots,h_n)(-1)^{n(|f|+1)+\sum\limits_{k=1}^{n-1}\sum\limits_{j=1}^k|h_{\sigma(j)}|},$$
\[\begin{aligned}
	 (-1)^{\eta_3}=&\chi(\sigma'; g_1,\dots,g_j)\chi(\sigma''; h_{j+1},\dots,h_n)(-1)^{1+n(|f|+1)+\sum\limits_{k=1}^{n-1}\sum\limits_{s=1}^k|h_{j+\sigma''(s)}|}&\\
	 &(-1)^{(\sum_{k=1}^j|g_{\sigma'(k)}|)(n-j)+\sum\limits_{k=1}^{j-1}\sum\limits_{s=1}^k|g_{\sigma'(s)}|+(|g_{\sigma'(1)}|+1)(|f|+1)},&
\end{aligned}\]

and
\begin{align*}
	 (-1)^{\eta_4}=&\chi(\sigma'; g_1,\dots,g_j)\chi(\sigma''; h_{j+1},\dots,h_n)(-1)^{1+n(|f|+1)+\sum\limits_{k=1}^{j-1}\sum\limits_{s=1}^k|g_{\sigma'(s)}|}\\
	 &(-1)^{(|h_{j+\sigma''(1)}|+1)(|f|+\sum_{k=1}^j|g_{\sigma'(k)}|+j+1) +\sum\limits_{k=1}^{n-1}\sum\limits_{s=1}^k|h_{j+\sigma''(s)}|+(\sum_{k=1}^j|g_{\sigma'(k)}|)(n-j)}.
\end{align*}
In Equation~\eqref{L infinity O}, Part $\rm (A)$  belongs to  $\frakC_{\RBO_R}(V)$ and Part $\rm (B)$ belongs to  $\frakC_{\RBO_S}(V)$.

\smallskip

\item[(III)]  Let $m\geqslant 1$.  For homogeneous elements $sh\in \Hom((sV)^{\ot n},sV)\subset \mathfrak{C}_{\Alg}(V), g_1,\dots,g_m\in \Hom(T^c(sV),V)\subset \mathfrak{C}_{\RBO}(V)$ with $1\leqslant m\leqslant n$, for $1\leqslant k\leqslant m$,  define $$l_{m+1}(g_1\ot \cdots\ot g_k\ot sh \ot g_{k+1}\ot \cdots\ot g_m)\in \mathfrak{C}_{\RBA}(V)$$ to be
$$l_{m+1}(g_1\ot \cdots\ot g_k\ot sh \ot g_{k+1}\ot \cdots\ot g_m)=(-1)^{(|h|+1)(\sum\limits_{j=1}^k|g_j|)+k}l_{m+1}(sh\ot g_1\ot \cdots \ot g_m),$$
where the RHS has been introduced in (II).

\item[(IV)] All other  components of operators $\{l_n\}_{n\geqslant 1}$ vanish.
\end{itemize}

\cite[Proposition 3.9]{CGWZ24} gives immediately an alternative  definition of homotopy Rota-Baxter systems.
\begin{prop}
	A homotopy Rota-Baxter  system  structure  on a graded space $V$ is equivalent to a Maurer-Cartan element in the $L_\infty$-algebra $\frakC_{\RBA}(V)$. In particular, when $V$ is concentrated in degree $0$, a Maurer-Cartan element in $\frakC_{\RBA}(V)$ gives a Rota-Baxter   system  structure  on $V$.
\end{prop}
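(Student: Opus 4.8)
The statement is a direct instance of the general correspondence between Maurer--Cartan elements of a convolution $L_\infty$-algebra and morphisms of dg operads out of a cobar construction. The plan is to invoke \cite[Proposition 3.9]{CGWZ24}, which asserts that for any coaugmented homotopy cooperad $\calc$ and any dg operad $\calp$, the Maurer--Cartan elements of the convolution $L_\infty$-algebra $\mathbf{Hom}(\calc,\calp)^{\prod}$ are in natural bijection with the morphisms of dg operads $\Omega(\calc)\to\calp$.

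I would first specialize this to $\calc={\RBS^\ac}$ and $\calp=\End_V$. By the definition of $\RBSinfty$ we have $\RBSinfty=\Omega({\RBS^\ac})$, and by construction $\frakC_{\RBA}(V)=\mathbf{Hom}({\RBS^\ac},\End_V)^{\prod}$; since a homotopy Rota-Baxter system on $V$ is by definition a morphism of dg operads $\RBSinfty\to\End_V$, the cited proposition yields at once that homotopy Rota-Baxter system structures on $V$ correspond bijectively to Maurer--Cartan elements of $\frakC_{\RBA}(V)$. The one point to verify is that the explicit brackets $\{l_n\}_{n\geqslant 1}$ listed in (I)--(IV) agree with the abstract convolution $L_\infty$-structure of \cite[Proposition 3.9]{CGWZ24}; this is precisely the direct computation already recorded above, so no further work is required.

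For the ``in particular'' clause I would argue by a degree count. A Maurer--Cartan element is an element of degree $-1$ of $\frakC_{\RBA}(V)\cong\frakC_{\Alg}(V)\oplus\frakC_{\RBO}(V)$; indeed all suspended generators carry degree $-1$, since $sm_n$ has degree $1+(n-2)-n=-1$ and the suspended $R_n,S_n$ have degree $(n-1)-n=-1$. When $V$ is concentrated in degree $0$, the space $sV$ sits in degree $1$, so $\Hom((sV)^{\ot n},sV)$ is concentrated in degree $1-n$ and $\Hom((sV)^{\ot n},V)$ in degree $-n$. Imposing the degree constraint $-1$ forces the only nonzero components of the Maurer--Cartan element to be $m_2\in\Hom((sV)^{\ot 2},sV)$ together with $R_1,S_1\in\Hom(sV,V)$, so it reduces to the triple $(m_2,R_1,S_1)$, i.e. an associative multiplication and two linear operators on $V$.

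Finally I would restrict the Maurer--Cartan equation $\sum_{n\geqslant 1}\frac{1}{n!}l_n(\alpha,\dots,\alpha)=0$ to this surviving triple. The $\frakC_{\Alg}$-component collapses to $l_2(sm_2\ot sm_2)=[sm_2,sm_2]_G=0$, which is associativity of $m_2$; the $\frakC_{\RBO_R}$- and $\frakC_{\RBO_S}$-components collapse to the ternary brackets $l_3$ of types (II) and (III) evaluated on $sm_2$ together with two of the operators $R_1,S_1$. I expect the main obstacle to lie here: one must carefully track the suspension signs entering the definitions of $\hat g,\hat h$ and of the coefficients $(-1)^{\eta_i}$, and verify that these reduced equations are exactly the defining relations \eqref{RBS1} and \eqref{RBS2}, with no spurious terms or additional constraints, so that $(m_2,R_1,S_1)$ is precisely a Rota-Baxter system in the sense of Definition~\ref{definition-RBS}.
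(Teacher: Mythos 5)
Your proposal is correct and follows essentially the same route as the paper, whose entire argument is the remark that \cite[Proposition 3.9]{CGWZ24} applied to $\RBSinfty=\Omega(\RBS^\ac)$ and $\frakC_{\RBA}(V)=\mathbf{Hom}(\RBS^\ac,\End_V)^{\prod}$ gives the correspondence immediately. Your degree count for the ``in particular'' clause (only $m_2\in\Hom((sV)^{\ot 2},sV)$ and $R_1,S_1\in\Hom(sV,V)$ survive in degree $-1$ when $V$ sits in degree $0$) is a correct elaboration of a step the paper leaves implicit.
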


By interpreting Rota-Baxter systems as Maurer-Cartan elements as described above, we can study their deformation theory using the twisting procedure in this \( L_\infty \)-algebra.

\begin{prop}
Let $(A,\mu,R,S)$ be  a Rota-Baxter system. Twist the $L_\infty$-algebra $\frakC_{\RBA}(V)$ by the Maurer-Cartan element corresponding to the Rota-Baxter system structure $(A,\mu,R,S)$, then its underlying complex controls the deformations of Rota-Baxter system.

\end{prop}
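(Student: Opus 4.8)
The plan is to invoke the standard twisting formalism for $L_\infty$-algebras and to combine it with the identification of Maurer--Cartan elements from the preceding proposition. Write $L=\frakC_{\RBA}(V)$ for the $L_\infty$-algebra with operations $\{l_n\}_{n\geqslant 1}$ described in this section, and let $\pi\in L$ be the Maurer--Cartan element corresponding to the Rota-Baxter system $(A,\mu,R,S)$ (here $V=A$ is concentrated in degree $0$, so under the identification $\frakC_{\RBA}(V)\cong\frakC_{\Alg}(V)\oplus\frakC_{\RBO}(V)$ the element $\pi$ is assembled from $\mu$, $R$ and $S$, and has degree $-1$). First I would recall the twisted $L_\infty$-algebra $L^\pi=(L,\{l_n^\pi\}_{n\geqslant 1})$, whose operations are
\[ l_n^\pi(x_1\ot\cdots\ot x_n)=\sum_{k\geqslant 0}\frac{1}{k!}\, l_{n+k}\big(\underbrace{\pi\ot\cdots\ot\pi}_{k}\ot x_1\ot\cdots\ot x_n\big). \]
Since $L=\mathbf{Hom}(\RBS^\ac,\End_V)^{\prod}$ is a completed convolution $L_\infty$-algebra and $\pi$ has components in every arity, these sums are a priori infinite, so the first step is to check that they are well defined; this follows because each extra insertion of $\pi$ raises the arity, so that every homogeneous component of $l_n^\pi$ receives only finitely many contributions, i.e. $L$ is complete for its arity filtration. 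Granting this, $L^\pi$ is again an $L_\infty$-algebra and its curvature $l_0^\pi=\sum_{k\geqslant 0}\frac{1}{k!}l_k(\pi^{\ot k})$ vanishes because $\pi$ is Maurer--Cartan, by the general twisting theorem recalled in \cite[Section 2]{CGWZ24}.

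Second, I would establish the fundamental bijection of twisting: for $\xi\in L$ of degree $-1$ one has the formal identity
\[ \sum_{n\geqslant 1}\frac{1}{n!}\, l_n^\pi(\xi^{\ot n})=\sum_{m\geqslant 1}\frac{1}{m!}\, l_m\big((\pi+\xi)^{\ot m}\big), \]
which results from the binomial expansion of $(\pi+\xi)^{\ot m}$ together with the graded symmetry of the $l_m$ and the flatness just noted. Consequently $\xi$ is a Maurer--Cartan element of $L^\pi$ if and only if $\pi+\xi$ is a Maurer--Cartan element of $L$. By the preceding proposition the Maurer--Cartan elements of $L=\frakC_{\RBA}(V)$ are exactly the homotopy Rota-Baxter system structures on $V$, and for $V$ concentrated in degree $0$ they are precisely the Rota-Baxter system structures on $V$. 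Hence the Maurer--Cartan elements of $L^\pi$ are in natural bijection with the Rota-Baxter system structures of the form $\pi+\xi$, that is, with the deformations of $(A,\mu,R,S)$; this is exactly the assertion that $L^\pi$ governs the deformation theory of the Rota-Baxter system.

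Finally, the underlying complex of $L^\pi$ is $(\frakC_{\RBA}(V),l_1^\pi)$ with twisted differential $l_1^\pi=\sum_{k\geqslant 0}\frac{1}{k!}\, l_{k+1}(\pi^{\ot k}\ot-)$; the identity $(l_1^\pi)^2=0$ is the arity-one $L_\infty$-relation in $L^\pi$, so this is genuinely a cochain complex, and through the Maurer--Cartan correspondence above its degree-one cohomology classifies infinitesimal deformations of $(A,\mu,R,S)$ while the degree-two cohomology receives the obstructions to integrating them, which is precisely what it means for this complex to control the deformations. The conceptual content of all three steps is standard, so I expect the main obstacle to be bookkeeping rather than ideas: verifying convergence of the twisting sums in the completed convolution algebra, and then unwinding the intricate sign conventions of this section to present $l_1^\pi$ explicitly and match it, in the classical (degree-zero) case, with the Rota-Baxter system cohomology of \cite{LQWY22p}.
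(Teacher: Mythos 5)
Your proposal is correct and takes essentially the same route the paper does: the paper states this proposition without a written proof, treating it as an immediate consequence of the standard twisting procedure for $L_\infty$-algebras applied to the Maurer--Cartan characterization of (homotopy) Rota--Baxter systems established just before, which is exactly the argument you spell out (including the remark that the twisted complex matches the cochain complex of \cite{LQWY22p}). One tiny inaccuracy that does not affect the argument: for a classical Rota--Baxter system on $V$ concentrated in degree $0$ the element $\pi$ has only the components coming from $\mu$, $R$ and $S$ (arities $1$ and $2$), so the twisting sums are finite for an even simpler reason than the arity-filtration completeness you invoke.
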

}

\begin{remark} In \cite{LQWY22p}, Y. Liu, K. Wang, and L. Yin construct a cochain complex for Rota-Baxter systems, and they prove that the cohomology of this complex governs the deformations of Rota-Baxter systems. Notably, the underlying complex of the twisting \(L_\infty\) algebra introduced above is exactly the suspension of the cochain complex introduced by Liu, Wang, and Yin. In this paper, we present an operadic approach to the cochain complex described in \cite{LQWY22p}, showing that it is endowed with an \(L_\infty\)-algebra structure.
	\end{remark}

\bigskip

	\appendix

\section{The Koszul dual homotopy cooperad: Proof of Proposition~\ref{Prop-Koszul-dual-coooperad}}\label{Appendix: Koszul dual homotopy cooperad}

In this appendix, we present a proof of Proposition~\ref{Prop-Koszul-dual-coooperad}.

 One needs to show that the induced derivation $\partial$ on the cobar construction of $\mathscr{S}(\RBS^\ac)$, i.e., the free operad generated by $s^{-1}\overline{\mathscr{S}(\RBS^\ac)}$ is a differential, that is, $\partial^2=0$.
	
	Denote $s^{-1}u_n, n\geqslant 2$ (resp. $s^{-1}v_n$, $ s^{-1}w_n$ $n\geqslant 1$) by $x_n$  (resp. $y_n$, $z_n$) which are  the basis elements of $s^{-1}\overline{\mathscr{S}(\RBS^\ac)}$ and generators of $\Omega\big(\mathscr{S}(\RBS^\ac)\big)$. Notice that $|x_n|=-1$ and $|y_n|=|z_n|=0$. By the definition of cobar construction of coaugmented homotopy cooperads,   the action of differential $\partial$ on generators $x_n, y_n, z_n$ is given by the following formulae:
	\begin{eqnarray}\label{Eq: partial xn}
		\partial(x_n)&=&-\sum\limits_{j=2}^{n-1}x_{n-j+1}\{x_j\}, \ n\geqslant 2;\end{eqnarray}
	\begin{eqnarray}\label{Eq: partial yn} \ \ \partial(y_n)&=&-\sum_{k=2}^{n}\sum_{r_1+\dots+r_k=n,\atop r_1,\dots,r_k\geqslant 1}x_k\{y_{r_1},\dots,y_{r_k}\}\\
	\nonumber	&&+\sum_{2\leqslant p\leqslant n,\atop 1\leqslant j\leqslant p}\sum_{r_1+\dots+r_p=n,\atop r_1,\dots,r_p\geqslant1}y_{r_1}\big\{x_p\{y_{r_2},\dots,y_{r_j},\id,z_{r_{j+1}},\dots,z_{r_{p}}\}\big\}, \  n\geqslant 1;
	\end{eqnarray}	

\begin{eqnarray}\label{Eq: partial yn} \ \ \partial(z_n)&=&-\sum_{k=2}^{n}\sum_{r_1+\dots+r_k=n,\atop r_1,\dots,r_k\geqslant 1}x_k\{z_{r_1},\dots,z_{r_k}\}\\
	\nonumber	&&+\sum_{2\leqslant p\leqslant n,\atop 1\leqslant j\leqslant p}\sum_{r_1+\dots+r_p=n,\atop r_1,\dots,r_p\geqslant1}z_{r_1}\big\{x_p\{y_{r_2},\dots,y_{r_j},\id,z_{r_{j+1}},\dots,z_{r_{p}}\}\big\}, \  n\geqslant 1.
\end{eqnarray}
For the definition of the brace operation in operads, we refer to  \cite{Ger63, GV95, Get93, CGWZ24}.

	Note that \begin{eqnarray}\label{Eq: partial y1} \partial(y_1)=\partial(w_1)=0.\end{eqnarray}
	
	We just need to show that $\partial^2=0$ holds on generators $x_n, n\geqslant 2$ and $y_n, z_n, n\geqslant 1$, which can be checked by direct computations.
	In fact, we have
	\begin{eqnarray*}
		\partial^2(x_n)&=&\partial\Big(-\sum\limits_{j=2}^{n-1}x_{n-j+1}\{x_j\}\Big)\\
		&\stackrel{\eqref{Eq: partial xn}}{=}&-\sum\limits_{j=2}^{n-1}\partial(x_{n-j+1})\{x_j\}+\sum_{j=2}^{n-1}x_{n-j+1}\{\partial(x_j)\}\\
		&\stackrel{\eqref{Eq: partial xn}}{=}&\sum_{i+j+k-2=n \atop
			2\leqslant i,j,k\leqslant n-2}(x_i\{x_j\})\{x_k\}-\sum_{i+j+k-2=n \atop
			2\leqslant i,j,k\leqslant n-2}x_i\big\{x_j\{x_k\}\big\}\\
		&{=}&\sum_{i+j+k-2=n \atop
			2\leqslant i,j,k\leqslant n-2}x_i\big\{x_j\{x_k\}\big\}+\sum_{i+j+k-2=n \atop
			2\leqslant i,j,k\leqslant n-2}x_i\{x_j,x_k\}-\sum_{i+j+k-2=n \atop
			2\leqslant i,j,k\leqslant n-2}x_i\{x_k,x_j\}-\sum_{i+j+k-2=n \atop
			2\leqslant i,j,k\leqslant n-2}x_i\big\{x_j\{x_k\}\big\}\\
		&=&0.\end{eqnarray*}
	We also  have
	\begin{eqnarray*}
		& &\\
		\partial^2(y_n)&\stackrel{\eqref{Eq: partial yn}}{=}&\partial\Big(-\sum_{k=2}^{n}\sum_{r_1+\dots+r_k=n,\atop r_1,\dots,r_k\geqslant 1}x_k\{y_{r_1},\dots,y_{r_k}\}\\
		\nonumber	&&+\sum_{2\leqslant p\leqslant n,\atop 1\leqslant j\leqslant p}\sum_{r_1+\dots+r_p=n,\atop r_1,\dots,r_p\geqslant1}y_{r_1}\big\{x_p\{y_{r_2},\dots,y_{r_j},\id,z_{r_{j+1}},\dots,z_{r_{p}}\}\big\}\Big).\end{eqnarray*}
	As $\partial$ is a derivation and by \eqref{Eq: partial y1}, \begin{eqnarray*}
		\partial^2(y_n)&=&-\sum_{k=2}^{n}\sum_{r_1+\dots+r_k=n,\atop r_1,\dots,r_k\geqslant 1}\partial(x_k)\{y_{r_1},\dots,y_{r_k}\}
		+\sum_{k=2}^{n-1}\sum_{r_1+\dots+r_k=n,\atop r_1,\dots,r_k\geqslant 1}\sum_{j=1}^k x_k\{y_{r_1},\dots,\partial(y_{r_j}),\dots,y_{r_k}\}\\
		&&+\sum_{2\leqslant p\leqslant n,\atop 1\leqslant j\leqslant p}\sum_{r_1+\dots+r_p=n,\atop r_1,\dots,r_p\geqslant1}\partial (y_{r_1})\big\{x_p\{y_{r_2},\dots,y_{r_j},\id,z_{r_{j+1}},\dots,z_{r_{p}}\}\big\}\\
		&&+\sum_{2\leqslant p\leqslant n,\atop 1\leqslant j\leqslant p}\sum_{r_1+\dots+r_p=n,\atop r_1,\dots,r_p\geqslant1}y_{r_1}\big\{\partial(x_p)\{y_{r_2},\dots,y_{r_j},\id,z_{r_{j+1}},\dots,z_{r_{p}}\}\big\}\\
		&&-\sum_{2\leqslant p\leqslant n,\atop 1\leqslant j\leqslant p}\sum_{r_1+\dots+r_p=n,\atop r_1,\dots,r_p\geqslant1}\sum_{i=2}^jy_{r_1}\big\{x_p\{y_{r_2},\dots,\partial(y_{r_i}),\dots,y_{r_j},\id,z_{r_{j+1}},\dots,z_{r_{p}}\}\big\}\\
		&&-\sum_{2\leqslant p\leqslant n,\atop 1\leqslant j\leqslant p}\sum_{r_1+\dots+r_p=n,\atop r_1,\dots,r_p\geqslant1}\sum_{i=j+1}^py_{r_1}\big\{x_p\{y_{r_2},\dots,y_{r_j},\id,z_{r_{j+1}},\dots,\partial(z_{r_i}),\dots,z_{r_{p}}\}\big\}\\
		&\stackrel{\eqref{Eq: partial xn}\eqref{Eq: partial yn}}{=}&\underbrace{\sum_{k=2}^{n}\sum_{r_1+\dots+r_k=n \atop r_1,\dots,r_k\geqslant 1}\sum_{i=2}^{k-1}\big(x_{k-i+1}\{x_i\}\big)\{y_{r_1},\dots,y_{r_k}\}}_{\mathrm{(I)}}\\
		&&- \underbrace{\sum_{k=2}^{n-1}\sum_{r_1+\dots+r_k=n \atop r_1,\dots,r_k\geqslant 1} \sum_{j=1}^k \sum_{u=2}^{r_j} \sum_{l_1+\dots+l_u=r_j,\atop l_1,\dots,l_u\geqslant 1}x_k\{y_{r_1},\dots,   x_u\{y_{l_1}, \dots, y_{l_u}\},\dots,y_{r_k}\}}_{\mathrm{(II)}}\\
		&&+\underbrace{\sum_{k=2}^{n-1}\sum_{r_1+\dots+r_k=n \atop r_1,\dots,r_k\geqslant 1} \sum_{j=1}^k\sum_{2\leqslant p\leqslant r_j,\atop 1\leqslant j\leqslant p}\sum_{l_1+\dots+l_p=r_j,\atop l_1,\dots,l_p\geqslant 1}     x_k\{y_{r_1},\dots,   y_{l_1}\big\{x_p\{y_{l_2},\dots,y_{l_j},\id,z_{l_{j+1}},\dots,z_{l_{p}}\}\big\},\dots,y_{r_k}\}}_{\mathrm{(III)}}\\
		&&-\underbrace{\sum_{2\leqslant p\leqslant n-1,\atop 1\leqslant j\leqslant p}\sum_{r_1+\dots+r_p=n,\atop r_1,\dots,r_q\geqslant1} \sum_{k=2}^{r_1}\sum_{l_1+\dots+l_k=r_1 \atop  l_1, \dots, l_k\geqslant 1 }\Big(x_k\big\{y_{l_1},\dots,y_{l_k}\big\}\Big)\big\{x_p\{y_{r_2},\dots,y_{r_j},\id,z_{r_{j+1}},\dots,z_{r_{p}}\}\big\}}_{\mathrm{(IV)}}\\
		&&+\underbrace{\sum_{2\leqslant p\leqslant n-1,\atop 1\leqslant j\leqslant p}\sum_{r_1+\dots+r_p=n,\atop r_1,\dots,r_p\geqslant1} \sum_{2\leqslant k\leqslant r_1,\atop 1\leqslant i\leqslant k}\sum_{l_1+\dots+l_k=r_1,\atop l_1,\dots,l_k\geqslant 1}\Big(y_{l_1}\big\{x_{k}\{y_{l_2},\dots,y_{l_i},\id,\dots z_{l_{j}}\}\big\}\Big)\Big\{x_p\{y_{r_2},\dots,y_{r_j},\id,\dots,z_{r_{q}}\}\Big\}}_{\mathrm{(V)}}\\
		&&-\underbrace{\sum_{2\leqslant p\leqslant n,\atop 1\leqslant j\leqslant p}\sum_{r_1+\dots+r_p=n,\atop r_1,\dots,r_p\geqslant1}\sum\limits_{j=2}^{p-1}y_{r_1}\big\{(x_{p-j+1}\{x_j\})\{y_{r_2},\dots,y_{r_j},\id,z_{r_{j+1}},\dots,z_{r_{p}}\}\big\}}_{\mathrm{(VI)}}\\
		&&+\underbrace{\sum_{2\leqslant p\leqslant n-1\atop 1\leqslant i\leqslant p} \sum_{r_1+\dots+r_p=n,\atop r_1,\dots,r_p\geqslant1}\sum_{j=2}^i\sum_{k=2}^{r_j} \sum_{l_1+\dots+l_k=r_j\atop l_1,\dots,l_k\geqslant1}y_{r_1}\big\{x_p\{y_{r_2},\dots,x_{k}\{y_{l_1},\dots,y_{l_{k}}\},\dots,y_{r_{i}},\id,\cdots z_{r_{p}}\}\big\}}_{\mathrm{(VII)}}\\
		&&-\underbrace{\sum_{ 2\leqslant p\leqslant n-1, \atop 1\leqslant i\leqslant p, 1\leqslant t\leqslant s.} \sum_{r_1+\dots+r_p=n \atop r_1,\dots,r_p\geqslant 1}\sum_{j=2}^i  \sum_{l_1+\dots+l_k=r_j\atop l_1,\dots,l_s\geqslant1}  y_{r_1}\big\{x_p\{y_{r_2},\dots,y_{l_1}\{x_k\{y_{l_2},\dots,y_{l_t},\id,\dots,z_{l_s}\}\},\dots,y_{r_i},\id,\dots, z_{r_{p}}\}\big\}}_{\mathrm{(VIII)}}
		\\	
		&&+\underbrace{\sum_{2\leqslant p\leqslant n-1\atop 1\leqslant i\leqslant p} \sum_{r_1+\dots+r_p=n,\atop r_1,\dots,r_p\geqslant1}\sum_{j=i+1}^p\sum_{k=2}^{r_j} \sum_{l_1+\dots+l_k=r_j\atop l_1,\dots,l_k\geqslant1}y_{r_1}\big\{x_p\{y_{r_2},\dots,y_{r_{i}},\id,\dots,x_{k}\{z_{l_1},\dots,z_{l_{k}}\},\cdots z_{r_{p}}\}\big\}}_{\mathrm{(IX)}}\\
		&&-{\small{\underbrace{\sum_{ 2\leqslant p\leqslant n-1, \atop 1\leqslant i\leqslant p, 1\leqslant t\leqslant s.} \sum_{r_1+\dots+r_p=n \atop r_1,\dots,r_p\geqslant 1}\sum_{j=i+1}^p  \sum_{l_1+\dots+l_k=r_j\atop l_1,\dots,l_s\geqslant1}  y_{r_1}\big\{x_p\{y_{r_2},\dots,y_{r_i},\id,z_{r_{i+1}},\cdots, z_{l_1}\{x_k\{y_{l_2},\dots,y_{l_t},\id,z_{l_{t+1}}\dots,z_{l_s}\}\},\dots, z_{r_{p}}\}\big\}}_{\mathrm{(X)}}}}\\
	\end{eqnarray*}

	According to the pre-Jacobi identity (see \cite{Ger63, GV95, Get93}or \cite[Proposition 3.7]{CGWZ24}), we have
	$$
	\mathrm{(I)}=\mathrm{(II)},
	\mathrm{(III)}=\mathrm{(IV)}\  \mathrm{and}\
	\mathrm{(V)+(VII)+(IX)}=\mathrm{(VI)+(VIII)+(X)},
	$$
	so we obtain   $\partial^2(y_n)=0$.
	
	Similarly,  one can check that $\partial^2(z_n)=0$.

\bigskip

	\section{The minimal model for operad ${}_m\RBS$}\label{Appendix: the minimal model for monomimal operad}
	
In this section,	we will show that the homology of the differential graded operad ${}_m\RBS_\infty$ introduced in the proof of Theorem~\ref{Thm: Minimal model} concentrates in degree $0$. More explicitly, we will prove that ${}_m\RBS_\infty$  is the minimal model of a certain monomial operad.

\begin{defn}The operad $m\RBS$ is defined by the quotient $\mathcal{F}(E)/<G>$, where $E$ is the collection defined by $E(1)=\mathbf{k}R\oplus \mathbf{k}S, E(2)=\mathbf{k}\mu, E(n)=0$ for $n\geqslant 3$, and $G$ is the set of relations:
\[\mu\circ_{1} \mu\ \mathrm{,  } \ \ R\circ_1(\mu\circ_1 R) \ \ \mathrm{and}\ \  S\circ_1(\mu\circ_1 R).\]
	\end{defn}

Recall that ${}_m\RBS_\infty$ is the  quasi-free dg operad generated by graded collections $\{m_n\}_{n\geq 2}\cup\{R_n\}_{n\geq1}\cup\{S_n\}_{n\geq1}$ endowed with the differential $\overline{\partial}$.  The action  of $\bar \partial$ on generators is given by the following formulae:
\begin{eqnarray}\label{Eq: defining mHRB 1} &&\forall n\geqslant 2,\quad
	\bar{\partial}{m_n} = \sum_{j=2}^{n-1}(-1)^{1+j(n-1)}m_{n-j+1}\circ_1 m_j,
	\\&& \forall n\geqslant 1,   \quad   \bar{\partial} R_n=   \sum\limits_{\small\substack{ r_1+ r_2=n\\r_1 , r_2\geqslant 1}}(-1)^{r_1(r_2-1)}\Big(R_{r_1}\circ_1 m_2\circ_1 R_{r_2}\Big) ,\\
	& &\forall n\geqslant 1,   \quad   \bar{\partial} S_n=   \sum\limits_{\small\substack{ r_1+ r_2=n\\r_1 , r_2\geqslant 1}}(-1)^{r_1(r_2-1)}\Big(S_{r_1}\circ_1 m_2\circ_1 R_{r_2}\Big) .
\end{eqnarray}

Now, we  are going to prove that ${}_m\RBS_\infty$ is the minimal model of the operad ${}_m\RBS$. To achieve this, we will construct  a explicit homotopy. The construction of this homotopy map involves a  graded path-lexicographic ordering on the operad $\RBS_\infty$.

	To any tree monomial $\mathcal{T}$ with  $n$ leaves (written as $\mbox{arity}(\mathcal{T})=n$), we can associate   with a sequence   $(x_1, \dots, x_n)$  where  $x_i$ is the word formed by   generators of ${}_m\RBS_\infty$ corresponding to the vertices along the unique path from the root of $\mathcal{T}$  to its $i$-th leaf.
	
	For two graded tree monomials $\mathcal{T},\mathcal{T}'$, we compare $\mathcal{T},\mathcal{T}'$ in the following way:
	\begin{itemize}
		\item[(i)] If $\mbox{arity}(\mathcal{T})>\mbox{arity}(\mathcal{T}')$, then  $\mathcal{T}>\mathcal{T}'$;
		\item[(ii)] if $\mbox{arity}(\mathcal{T})=\mbox{arity}(\mathcal{T}')$, and $\deg(\mathcal{T})>\deg(\mathcal{T}')$, then $\mathcal{T}>\mathcal{T}'$, where $\deg(\mathcal{T})$ is the sum of the degrees of all generators of ${}_m\RBS_\infty$ appearing in    $\mathcal{T}$;
		\item[(iii)] if $\mbox{arity}(\mathcal{T})=\mbox{arity}(\mathcal{T}')(=n), \deg(\mathcal{T})=\deg(\mathcal{T}')$, then $\mathcal{T}>\mathcal{T}'$ if the path sequences   $(x_1,\dots,x_n), (x'_1,\dots,x_n')$ associated to $\mathcal{T}, \mathcal{T}'$ satisfies $(x_1,\dots,x_n)>(x_1',\dots,x_n')$ with respect to the length-lexicographic order of words induced by $$R_1<S_1<m_2<R_2<S_2<m_3<\cdots<R_n<S_n<m_{n+1}<R_{n+1}<S_{n+1}<\cdots.$$
		
	\end{itemize}
	It is ready to see that this is a well-order.
	Under this order, the leading terms in the expansion of $\bar{\partial}(m_n), \bar{\partial}(R_n)$ and $\bar{\partial}(S_n)$ are the following tree monomials respectively:
	\begin{figure}[h]
		\begin{tikzpicture}[scale=0.8]
			\tikzstyle{every node}=[thick,minimum size=4pt, inner sep=1pt]
			\node(1) at (0,0) [draw, circle, fill=black, label=right:{\small $m_{n-1}$}]{};
			\node(2-1) at (-1,1) [draw, circle, fill=black, label=right: {\small $\ m_2$}]{};
			\node(3-1) at (-2,2){};
			\node(3-2) at (0,2){};
			\node(2-2) at (0,1){};
			\node(2-3) at (1,1){};
			\draw (1)--(2-1);
			\draw  (1)--(2-2);
			\draw  (1)--(2-3);
			\draw (2-1)--(3-1);
			\draw (2-1)--(3-2);
			\draw [dotted,line width=1pt](-0.4,0.5)--(0.4,0.5);
		\end{tikzpicture}
		\hspace{8mm}
		\begin{tikzpicture}[scale=0.8]
			\tikzstyle{every node}=[thick,minimum size=4pt, inner sep=1pt]
			\node(1) at (0,0) [draw, circle, label=right:{\small$\ R_{n-1}$}]{};
			\node(2-1) at (-1,1) [draw, circle, fill=black, label=right: {\small$\ m_2$}]{};
			\node(3-1) at (-1.75,1.75)[draw, circle, label=right:{\small $R_1$}]{};
			\node(3-2) at (0,2){};
			\node(2-2) at (0,1){};
			\node(2-3) at (1,1){};
			\node(4) at (-2.5,2.5){};
			\draw (1)--(2-1);
			\draw  (1)--(2-2);
			\draw  (1)--(2-3);
			\draw (2-1)--(3-1);
			\draw (2-1)--(3-2);
			\draw (3-1)--(4);
			\draw [dotted,line width=1pt](-0.4,0.5)--(0.4,0.5);
		\end{tikzpicture}
		\hspace{8mm}
		\begin{tikzpicture}[scale=0.8]
			\tikzstyle{every node}=[thick,minimum size=4pt, inner sep=1pt]
			\node(1) at (0,0) [fill=red, circle, label=right:{\small $\ S_{n-1}$}]{};
			\node(2-1) at (-1,1) [draw, circle, fill=black, label=right:{\small $\ m_2$}]{};
			\node(3-1) at (-1.75,1.75)[draw, circle, label=right: {\small$R_1$}]{};
			\node(3-2) at (0,2){};
			\node(2-2) at (0,1){};
			\node(2-3) at (1,1){};
			\node(4) at (-2.5,2.5){};
			\draw (1)--(2-1);
			\draw  (1)--(2-2);
			\draw  (1)--(2-3);
			\draw (2-1)--(3-1);
			\draw (2-1)--(3-2);
			\draw (3-1)--(4);
			\draw [dotted,line width=1pt](-0.4,0.5)--(0.4,0.5);
		\end{tikzpicture}
	\end{figure}

	Let $\mathcal{S}$ be a generator of degree $\geqslant 1$ in $ {}_m\RBS_\infty$. Denote the leading monomial of $\bar{\partial} \mathcal{S}$ by $\widehat{\mathcal{S}}$ and the  coefficient of $\widehat{\mathcal{S}}$ in $\bar{\partial}$ is written  as $l_\mathcal{S}$.  A tree monomial of the form $\widehat{\mathcal{S}}$ is called typical, so all typical tree monomials are  of the form $$ m_{n-1}\circ_1 m_2 \ \mathrm{,  } \ \  (R_{n-1}\circ_1 m_2)\circ_1 R_1 \ \ \mathrm{and}\ \ (S_{n-1}\circ_1 m_2)\circ_1 R_1, $$
	which are illustrated above.
	It is  easily seen that the coefficients $l_\mathcal{S}$ are always $\pm 1$.

	\begin{defn}\label{Def: effective tree monomials}  A tree monomial $\mathcal{T}$ in ${}_m\RBS_\infty$ is called effective if $\mathcal{T}$ satisfies the following conditions:
		\begin{itemize}
			\item[(i)] There exists a typical divisor $\mathcal{T}'=\widehat{\mathcal{S}}$ in $\mathcal{T}$ such that  on the path from the root of $\mathcal{T}'$ to the leftmost leaf $l$ of $\mathcal{T}$ above the root of $\mathcal{T}'$, there are no other typical divisors, and there are no vertex of positive degree on this path except the root of $\mathcal{T}'$ possibly.
			\item[(ii)] For any leaf $l'$ of $\mathcal{T}$ which lies on the left of $l$, there are no vertices of positive degree and no typical divisors  on the path from the root of $\mathcal{T}$ to $l'$.
		\end{itemize}
		The typical divisor $\mathcal{T}'$ is called the effective divisor of $\mathcal{T}$ and  the leaf $l$ is called the typical leaf of $\mathcal{T}$.
	\end{defn}

	Morally, the effective divisor of a tree monomial $\mathcal{T}$ is the left-upper-most typical divisor of $\mathcal{T}$.
	It can be easily seen that for the effective divisor $\mathcal{T}'$ in $\mathcal{T}$ with effective leaf $l$, any vertex in $\mathcal{T}'$ doesn't belong to the path from root of $\mathcal{T}$ to any leaf $l'$ located on the left of $l$.
	\begin{exam}Consider three tree monomials as follows:
	\begin{eqnarray*}
		\begin{tikzpicture}
			\tikzstyle{every node}=[thick,minimum size=4pt, inner sep=1pt]
			\node(1)at (0,0)[circle, draw, fill=black]{};
			\node(2-1) at (-0.5,0.5){};
			\node(2-2) at (0.5,0.5)[circle,draw, fill=black]{};
			\node(3-1) at (0,1)[circle, draw]{};
			\node(3-2) at (0.3,1){};
			\node(3-3) at (0.7,1){};
			\node(3-4) at (1,1){};
			\node(4-1) at (-0.5,1.5)[circle, draw,fill=black]{};
			\node (4-2) at(0,1.5){};
			\node(5-1) at (-1,2)[circle, draw]{};
			\node(5-2) at(0.5,1.5)[circle, draw] {};
	     	\node(8-2) at(-0.15,2){};
			\node (6-1) at (-1.5, 2.5)[circle,draw, fill=black]{};
			\node(6-2) at (0, 2)[circle, draw, fill=black]{};
			\node(6-3) at(0.5,2){};
			\node(6-4) at (1,2){};
			\node(7-1) at (-2,3)[minimum size=0pt, label=above:$l'$]{};
			\node(7-2) at (-1,3){};
			\node(7-3) at(-1,3){};
			\node(7-4) at (0.5,2.5){};
			\node(8-1) at (-0.5, 2.5) [circle, draw]{};
			\draw (1)--(2-1);
			\draw (1)--(2-2);
			\draw (2-2)--(3-1);
			\draw (2-2)--(3-2);
			\draw (2-2)--(3-3);
			\draw (2-2)--(3-4);
			\draw (3-1)--(4-1);
			\draw (3-1)--(4-2);
			\draw (4-1)--(8-2);
			\draw (3-1)--(5-2);
			\draw (4-1)--(5-1);
			\draw (5-1)--(6-1);
			\draw (6-1)--(7-1);
			\draw (6-1)--(7-2);
			\draw (5-2)--(6-2);
			\draw (5-2)--(6-3);
			\draw (5-2)--(6-4);
			\draw(6-2)--(7-4);
			\draw (6-2)--(8-1);
			\draw (8-1)--(7-3);
			\draw[dashed,blue](-1.3,1.7) to [in=150, out=120] (-0.7,2.3) ;
			\draw[dashed,blue](-1.3,1.7)--(-0.3,0.7);
			\draw[dashed,blue] (0.3,1.3)to [in=-30, out=-60] (-0.3,0.7);
			\node[minimum size=0pt,inner sep=0pt,label=below:$(\mathcal{T}_1')$] (name) at (0,-0.3){};
			\draw[dashed, blue](0.3,1.3)--(-0.7,2.3);
		\end{tikzpicture}
		\hspace{4mm}
		\begin{tikzpicture}
		\tikzstyle{every node}=[thick,minimum size=4pt, inner sep=1pt]
		\node(1)at (0,0)[circle, draw, fill=black]{};
		\node(2-1) at (-0.5,0.5){};
		\node(2-2) at (0.5,0.5)[circle,draw, fill=black]{};
		\node(3-1) at (0,1)[circle,fill=red]{};
		\node(3-2) at (0.3,1){};
		\node(3-3) at (0.7,1){};
		\node(3-4) at (1,1){};
		\node(4-1) at (-0.5,1.5)[circle, draw,fill=black]{};
		\node (4-2) at(0,1.5){};
		\node(5-1) at (-1,2)[circle, draw]{};
		\node(5-2) at(0.5,1.5)[circle, draw] {};
		\node(8-2) at(-0.15,2){};
		\node (6-1) at (-1.5, 2.5)[circle,draw, fill=black]{};
		\node(6-2) at (0, 2)[circle, draw, fill=black]{};
		\node(6-3) at(0.5,2){};
		\node(6-4) at (1,2){};
		\node(7-1) at (-2,3)[minimum size=0pt, label=above:$l''$]{};
		\node(7-2) at (-1,3){};
		\node(7-3) at(-1,3){};
		\node(7-4) at (0.5,2.5){};
		\node(8-1) at (-0.5, 2.5) [circle, draw]{};
		\draw (1)--(2-1);
		\draw (1)--(2-2);
		\draw (2-2)--(3-1);
		\draw (2-2)--(3-2);
		\draw (2-2)--(3-3);
		\draw (2-2)--(3-4);
		\draw (3-1)--(4-1);
		\draw (3-1)--(4-2);
		\draw (4-1)--(8-2);
		\draw (3-1)--(5-2);
		\draw (4-1)--(5-1);
		\draw (5-1)--(6-1);
		\draw (6-1)--(7-1);
		\draw (6-1)--(7-2);
		\draw (5-2)--(6-2);
		\draw (5-2)--(6-3);
		\draw (5-2)--(6-4);
		\draw(6-2)--(7-4);
		\draw (6-2)--(8-1);
		\draw (8-1)--(7-3);
		\draw[dashed,blue](-1.3,1.7) to [in=150, out=120] (-0.7,2.3) ;
		\draw[dashed,blue](-1.3,1.7)--(-0.3,0.7);
		\draw[dashed,blue] (0.3,1.3)to [in=-30, out=-60] (-0.3,0.7);
		\node[minimum size=0pt,inner sep=0pt,label=below:$(\mathcal{T}_1'')$] (name) at (0,-0.3){};
		\draw[dashed, blue](0.3,1.3)--(-0.7,2.3);
	\end{tikzpicture}	\hspace{4mm}
		\begin{tikzpicture}
		\tikzstyle{every node}=[thick,minimum size=4pt, inner sep=1pt]
		\node(1)at (0,0)[circle, draw]{};
		\node(2-1) at (-0.5,0.5)[minimum size=0pt, label=left:$\red \times$]{};
		\node(2-1) at (-0.5,0.5){};
		\node(2-2) at (0.5,0.5)[circle,draw, fill=black]{};
		\node(3-1) at (0,1)[circle, draw]{};
		\node(3-2) at (0.3,1){};
		\node(3-3) at (0.7,1){};
		\node(3-4) at (1,1){};
		\node(4-1) at (-0.5,1.5)[circle, draw,fill=black]{};
		\node (4-2) at(0,1.5){};
		\node(5-1) at (-1,2)[circle, draw]{};
		\node(5-2) at(0.5,1.5)[circle, draw] {};
		\node(8-2) at(-0.15,2){};
		\node (6-1) at (-1.5, 2.5)[circle,draw, fill=black]{};
		\node(6-2) at (0, 2)[circle, draw, fill=black]{};
		\node(6-3) at(0.5,2){};
		\node(6-4) at (1,2){};
		\node(7-1) at (-2,3){};
		\node(7-2) at (-1,3){};
		\node(7-3) at(-1,3){};
		\node(7-4) at (0.5,2.5){};
		\node(8-1) at (-0.5, 2.5) [circle, draw]{};
		\draw (1)--(2-1);
		\draw (1)--(2-2);
		\draw (2-2)--(3-1);
		\draw (2-2)--(3-2);
		\draw (2-2)--(3-3);
		\draw (2-2)--(3-4);
		\draw (3-1)--(4-1);
		\draw (3-1)--(4-2);
		\draw (4-1)--(8-2);
		\draw (3-1)--(5-2);
		\draw (4-1)--(5-1);
		\draw (5-1)--(6-1);
		\draw (6-1)--(7-1);
		\draw (6-1)--(7-2);
		\draw (5-2)--(6-2);
		\draw (5-2)--(6-3);
		\draw (5-2)--(6-4);
		\draw(6-2)--(7-4);
		\draw (6-2)--(8-1);
		\draw (8-1)--(7-3);
		\node[minimum size=0pt,inner sep=0pt,label=below:$(\mathcal{T}_2)$] (name) at (0,-0.3){};
	\end{tikzpicture}
		\hspace{4mm}	\begin{tikzpicture}
			\tikzstyle{every node}=[thick,minimum size=4pt, inner sep=1pt]
			\node(1)at (0,0)[circle, draw, fill=black]{};
			\node(2-1) at (-0.5,0.5){};
			\node(2-2) at (0.5,0.5)[circle,draw, fill=black]{};
			\node(3-1) at (0,1)[circle, draw]{};
			\node(3-2) at (0.3,1){};
			\node(3-3) at (0.7,1){};
			\node(3-4) at (1,1){};
			\node(4-1) at (-0.5,1.5)[circle, draw,fill=black]{};
			\node (4-2) at(0,1.5){};
			\node(5-1) at (-1,2)[circle, draw]{};
			\node(5-2) at(0.5,1.5)[circle, draw]{};	
			\node (6-1) at (-1.5, 2.5)[circle,draw, fill=black,label=left:$\red \times$]{};
			\node(8-2) at(-0.15,2){};
			\node (6-1) at (-1.5, 2.5)[circle,draw, fill=black]{};
			\node(6-2) at (0, 2)[circle, draw, fill=black]{};
			\node(6-3) at(0.5,2){};
			\node(6-4) at (1,2){};
			\node(7-1) at (-2,3){};
			\node(7-2) at (-1,3){};
			\node(7-3) at(-1,3){};
			\node(7-4) at (0.5,2.5){};
			\node(7-5) at (-1.5,3){};
			\node(8-1) at (-0.5, 2.5) [circle, draw]{};
			\draw (1)--(2-1);
			\draw (1)--(2-2);
			\draw (2-2)--(3-1);
			\draw (2-2)--(3-2);
			\draw (2-2)--(3-3);
			\draw (2-2)--(3-4);
			\draw (3-1)--(4-1);
			\draw (3-1)--(4-2);
			\draw (4-1)--(8-2);
			\draw (3-1)--(5-2);
			\draw (4-1)--(5-1);
			\draw (5-1)--(6-1);
			\draw (6-1)--(7-1);
			\draw (6-1)--(7-2);
			\draw (5-2)--(6-2);
			\draw (5-2)--(6-3);
			\draw (5-2)--(6-4);
			\draw(6-2)--(7-4);
			\draw (6-2)--(8-1);
			\draw (8-1)--(7-3);
		 \draw (6-1)--(7-5);
			\draw[dashed,blue](-1.3,1.7) to [in=150, out=120] (-0.7,2.3) ;
			\draw[dashed,blue](-1.3,1.7)--(-0.3,0.7);
			\draw[dashed,blue] (0.3,1.3)to [in=-30, out=-60] (-0.3,0.7);
			\node[minimum size=0pt,inner sep=0pt,label=below:$(\mathcal{T}_3)$] (name) at (0,-0.3){};
			\draw[dashed, blue](0.3,1.3)--(-0.7,2.3);
		\end{tikzpicture}
	\end{eqnarray*}

	For the three trees displayed above, each has two typical divisors.
	\begin{itemize}
		\item $\mathcal{T}_1'$ and $\mathcal{T}_1''$ are effective and the divisors in the blue  dashed circle are their effective divisor, $l'$ and $l''$ are respectively their effective leafs.
		\item $\mathcal{T}_2$ is not effective, since the first leaf is incident  to a vertex of degree 1, say the root of $\mathcal{T}_2$, which violates Condition (ii) in Definition \ref{Def: effective tree monomials}.
		\item $\mathcal{T}_3$ is not effective since there is a vertex of degree 1 on the path from the root of the typical divisor in the blue  dashed circle to the leftmost leaf above it, which violates Condition (i) in Definition \ref{Def: effective tree monomials}.
	\end{itemize}
\end{exam}

	Now we are going to construct a homotopy map $\overline{\mathfrak{H}}: _m\RBS_\infty\rightarrow _m\RBS_\infty$,  i.e.,  a degree $1$ map that satisfies  $\bar{\partial} \overline{\mathfrak{H}}+\overline{\mathfrak{H}}\bar{\partial} =\mathrm{Id}$ in positive degrees.

	\begin{defn}
		Let $\mathcal{T}$ be an effective tree monomial in $ _m\RBS_\infty$ and $\mathcal{T}'$ be its effective divisor. Assume that $\mathcal{T}'=\widehat{\mathcal{S}}$, where $\mathcal{S}$ is a generator of positive degree. Then define $$\overline{\mathfrak{H}}(\mathcal{T})=(-1)^\omega \frac{1}{l_\mathcal{S}}m_{\mathcal{T}', \mathcal{S}}(\mathcal{T}),$$ where $m_{\mathcal{T}',\mathcal{S}}(\mathcal{T})$ is the tree monomial obtained from $\mathcal{T}$ by replacing the effective divisor $\mathcal{T}'$ by $\mathcal{S}$, $\omega$ is the sum of degrees of all the vertices on the path from root of $\mathcal{T}'$ to the root of $\mathcal{T}$  (excluding the root vertex of $\mathcal{T}'$) and all the vertices located on the left of this path .
	\end{defn}
	
	\medskip

\begin{prop}
		The degree $1$ map $\overline{\mathfrak{H}}: {_m\RBS_\infty}\rightarrow {_m\RBS_\infty}$  is a homotopy map satisfying  $\bar{\partial} \overline{\mathfrak{H}}+\overline{\mathfrak{H}}\bar{\partial} =\mathrm{Id}$ in all positive degrees.
	
\end{prop}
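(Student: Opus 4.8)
The plan is to verify the chain-homotopy identity $\bar{\partial}\,\overline{\mathfrak{H}}+\overline{\mathfrak{H}}\,\bar{\partial}=\mathrm{Id}$ directly on the distinguished basis of tree monomials, after first extending $\overline{\mathfrak{H}}$ to all of ${}_m\RBS_\infty$ by declaring $\overline{\mathfrak{H}}(\mathcal{T})=0$ whenever $\mathcal{T}$ is not effective; this is visibly a degree $1$ linear map in each arity, which is all that a contracting homotopy requires, since no compatibility with operadic composition is needed and the homology is computed arity by arity. Fix a tree monomial $\mathcal{T}$ of positive degree and note that it must contain at least one generator of positive degree. The first structural step is a dichotomy lemma: every such $\mathcal{T}$ is \emph{either} effective \emph{or} \emph{reducible}, where I call $\mathcal{T}$ reducible if it carries a positive-degree generator $\mathcal{S}$ whose replacement by the typical monomial $\widehat{\mathcal{S}}$ produces an effective monomial whose effective divisor sits exactly at that location. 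The canonical choice of the effective divisor as the left-upper-most typical divisor (Definition~\ref{Def: effective tree monomials}) is what makes this alternative both exclusive and exhaustive, and it is the combinatorial backbone of the whole argument.

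Second, I would isolate the identity contribution. If $\mathcal{T}$ is effective with effective divisor $\mathcal{T}'=\widehat{\mathcal{S}}$, then $\overline{\mathfrak{H}}(\mathcal{T})=(-1)^{\omega}l_{\mathcal{S}}^{-1}\mathcal{T}''$, where $\mathcal{T}''$ is obtained from $\mathcal{T}$ by substituting the generator $\mathcal{S}$ for $\mathcal{T}'$. Applying $\bar{\partial}$ and expanding $\bar{\partial}\mathcal{S}=l_{\mathcal{S}}\widehat{\mathcal{S}}+(\text{strictly smaller monomials})$, the leading summand reinserts $\widehat{\mathcal{S}}=\mathcal{T}'$ in place of $\mathcal{S}$ and thus returns $\mathcal{T}$; here a sign-tracking lemma must show that $(-1)^{\omega}l_{\mathcal{S}}^{-1}$ against the Koszul sign incurred by differentiating the interior vertex $\mathcal{S}$ (past all vertices to its left and below it, which is exactly what $\omega$ records) collapses to the coefficient $+1$. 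Dually, if $\mathcal{T}$ is reducible, then $\overline{\mathfrak{H}}(\mathcal{T})=0$ and the copy of $\mathcal{T}$ is produced instead by $\overline{\mathfrak{H}}\,\bar{\partial}(\mathcal{T})$: among the summands of $\bar{\partial}\mathcal{T}$ there is exactly one, coming from differentiating the distinguished generator $\mathcal{S}$ along its leading direction, whose effective divisor is $\widehat{\mathcal{S}}$ at that spot, and $\overline{\mathfrak{H}}$ sends it back to $\mathcal{T}$ with coefficient $+1$ by the same sign computation.

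Third, and this is where the bulk of the work lies, I would match and cancel all remaining terms. The leftover summands of $\bar{\partial}\,\overline{\mathfrak{H}}(\mathcal{T})$ fall into two families: those obtained by differentiating a positive-degree vertex of $\mathcal{T}''$ other than $\mathcal{S}$, and those coming from the non-leading monomials of $\bar{\partial}\mathcal{S}$. Against these I would place the summands of $\overline{\mathfrak{H}}\,\bar{\partial}(\mathcal{T})$ other than the one isolated above. The desired bijection is the commutation of ``differentiate a distant vertex $v$, then replace the effective divisor'' with ``replace the effective divisor, then differentiate $v$'': when $v$ lies away from $\mathcal{T}'$ the effective divisor of the differentiated monomial is unchanged (again because it is the canonical left-upper-most one), so the two operations produce the same monomial, and one checks that the two Koszul signs are opposite, so the contributions cancel in $\bar{\partial}\,\overline{\mathfrak{H}}+\overline{\mathfrak{H}}\,\bar{\partial}$. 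In the reducible case the same matching pairs the far-vertex differentiations against the non-leading part of $\bar{\partial}$ applied to the distinguished $\mathcal{S}$.

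The principal obstacle will be the \emph{boundary interactions}: the summands in which differentiating a vertex $v$ near $\mathcal{T}'$ either creates a new typical divisor lying further up-left than $\mathcal{T}'$, destroys $\mathcal{T}'$ itself, or overlaps with the non-leading monomials of $\bar{\partial}\mathcal{S}$, so that the ``effective divisor is unchanged'' heuristic fails. These are precisely the terms for which the generic bijection of the previous paragraph breaks down, and one must show that they organize into internally cancelling packets; the combinatorics here is controlled by the two defining conditions of effectiveness and by the rigid shape of the three families of typical divisors $m_{n-1}\circ_1 m_2$, $(R_{n-1}\circ_1 m_2)\circ_1 R_1$ and $(S_{n-1}\circ_1 m_2)\circ_1 R_1$. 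Once all boundary packets are shown to cancel and the signs in the generic matching are verified, summing over $\mathcal{T}$ yields $\bar{\partial}\,\overline{\mathfrak{H}}+\overline{\mathfrak{H}}\,\bar{\partial}=\mathrm{Id}$ on every positive-degree component, which together with the spectral-sequence argument of Section~\ref{Section: minimal model RBSA} forces the positive homology of ${}_m\RBS_\infty$ to vanish.
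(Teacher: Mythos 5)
Your overall skeleton coincides with the paper's: the case split on effective versus non-effective monomials, the observation that $\overline{\mathfrak{H}}$ kills non-effective monomials so that in that case one only has to recover $\mathcal{T}$ from $\overline{\mathfrak{H}}\bar{\partial}(\mathcal{T})$, and the identification of the unique summand of $\bar{\partial}\mathcal{T}$ that is effective. However, there is a genuine gap at exactly the place you flag: the cancellation of the ``boundary interaction'' terms in the effective case is the substance of the proof, and you defer it (``one must show that they organize into internally cancelling packets'') without supplying any mechanism. The paper closes this with a structural device you do not use: it writes an effective $\mathcal{T}$ as an iterated partial composition $(\cdots((X_1\circ_{i_1}\cdots\circ_{i_p}\widehat{\mathcal{S}})\circ_{j_1}Y_1)\cdots)\circ_{j_q}Y_q$ with the effective divisor $\widehat{\mathcal{S}}$ isolated as one explicit factor, expands both $\bar{\partial}\,\overline{\mathfrak{H}}(\mathcal{T})$ and $\overline{\mathfrak{H}}\,\bar{\partial}(\mathcal{T})$ by the Leibniz rule, and uses the local identity $\overline{\mathfrak{H}}\bar{\partial}(\widehat{\mathcal{S}})=\widehat{\mathcal{S}}-l_{\mathcal{S}}^{-1}\bar{\partial}(\mathcal{S})$ together with the fact that differentiating a vertex $X_k$ or $Y_k$ away from the divisor does not displace the effective divisor; after this everything cancels term by term. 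Your bijection ``differentiate a distant vertex, then contract'' versus ``contract, then differentiate'' is the same cancellation in disguise, but without the composition decomposition you have no handle on the problematic summands in which differentiation creates or destroys typical divisors near $\mathcal{T}'$, so the argument is not complete as proposed.

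A second, smaller point: in your ``reducible'' (non-effective) case there is nothing to pair. Since $\overline{\mathfrak{H}}(\mathcal{T})=0$, one needs $\overline{\mathfrak{H}}\bar{\partial}(\mathcal{T})=\mathcal{T}$ on the nose, and the paper obtains this because every summand of $\bar{\partial}\mathcal{T}$ other than the distinguished one $(-1)^{\epsilon}l_{\mathcal{S}}\,\mathcal{T}_{\mathcal{S}\to\widehat{\mathcal{S}}}$ is itself non-effective and is therefore annihilated by $\overline{\mathfrak{H}}$; your proposed matching of ``far-vertex differentiations against the non-leading part of $\bar{\partial}\mathcal{S}$'' is not the operative mechanism there and would lead you astray when checking signs.
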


\begin{proof}We divide this proof into two cases: whether $\mathcal{T}$ is an effective tree or not.
	
	 If $\mathcal{T}$ is a tree monomial of positive degree that is not effective, then by the definition of $\overline{\mathfrak{H}}$, we have $\overline{\mathfrak{H}}(\mathcal{T})=0$. Thus, $\bar{\partial} \overline{\mathfrak{H}}(\mathcal{T})+\overline{\mathfrak{H}}\bar{\partial}(\mathcal{T}) =\overline{\mathfrak{H}}\bar{\partial}(\mathcal{T}).$  We firstly just need to check that $\overline{\mathfrak{H}}\bar{\partial}(\mathcal{T})=\mathcal{T}$.	Since $\mathcal{T}$ has   positive degree, there must exist at least one vertex of positive degree. Let's pick a special vertex $\mathcal{S}$ satisfying  the following conditions:
	\begin{itemize}
		\item[(i)] on the path from $\mathcal{S}$ to the leftmost leaf $l$ of $\mathcal{T}$ above $\mathcal{S}$, there are no other vertices of positive degree;
		\item[(ii)] for any leaf $l'$ of $\mathcal{T}$ located on the left of $l$, the vertices  on the path from the root of $\mathcal{T}$ to $l'$ are all of degree 0.
	\end{itemize}	
	It is evident that such a vertex always exists in $\mathcal{T}$. Conceptually, this vertex is the ``left-upper-most" vertex of positive degree. Denote by $\mathcal{T}_{\mathcal{S}\rightarrow \widehat{\mathcal{S}}}$ the tree monomial obtained from $\mathcal{T}$ by replacing the vertex $\mathcal{S}$ with $\widehat{\mathcal{S}}$.
	Then there will be a tree monomial $(-1)^\epsilon  l_\mathcal{S}   \mathcal{T}_{\mathcal{S}\rightarrow \widehat{\mathcal{S}}}$ appearing in $\bar\partial\mathcal{T}$, where $\epsilon$ is the sum of degrees of vertices located preceding $\mathcal{S}$ in the planar order. Due to the special position of the vertex $\mathcal{S}$, we have that the tree monomial $(-1)^\epsilon  l_\mathcal{S}  \mathcal{T}_{\mathcal{S}\rightarrow \widehat{\mathcal{S}}}$ is an effective tree monomial. Actually, this term is the only effective tree monomial appearing in $\bar\partial(\mathcal{T})$, i.e., there is no effective tree monomial in $\bar{\partial}(\mathcal{T})-(-1)^\epsilon  l_\mathcal{S} \mathcal{T}_{\mathcal{S}\rightarrow \widehat{\mathcal{S}}}$.
	So we have \begin{eqnarray}\overline{\mathfrak{H}}\bar{\partial}(\mathcal{T})&=&\overline{\mathfrak{H}}\big((-1)^\epsilon  l_\mathcal{S}  \mathcal{T}_{\mathcal{S}\rightarrow \widehat{\mathcal{S}}}\big)+\overline{\mathfrak{H}}\big(\bar{\partial}\mathcal{T}-(-1)^\epsilon  l_\mathcal{S}  \mathcal{T}_{\mathcal{S}\rightarrow \widehat{\mathcal{S}}}\big)\\
		&=&\mathcal{T}.\nonumber
		\end{eqnarray}

	 	If $\mathcal{T}$ is  an effective tree monomial with positive degree, we write $\mathcal{T}$ as a compositions  in the following way:
	 $$(\cdots (((((\cdots(X_1\circ_{i_1}X_2)\circ\cdots )\circ_{i_{p-1}}X_p)\circ_{i_p} \widehat{\mathcal{S}})\circ_{j_1}Y_1)\circ_{j_2}Y_2)\cdots)\circ_{j_q}Y_q,$$
	 where $\widehat{\mathcal{S}}$ is the effective divisor of $\mathcal{T}$ and $X_1,\dots, X_p$ are generators of $_m\RBS_\infty$ corresponding to the vertices which live on the path from root of $\mathcal{T}$ and root of $\widehat{\mathcal{S}}$ (except the root of $\widehat{\mathcal{S}}$) and on the left of this path in the underlying  tree of  $\mathcal{T}$.
	 { It is not difficult to see the following identity holds for effective divisors: }
	  $$\bar{\mathfrak{H}}\bar\partial(\widehat{\cals})=\widehat{\cals}-\frac{1}{l_\mathcal{S}}\bar\partial(\cals).$$
	  Then by definition,
	 { \small{
	 \begin{eqnarray*}
	 	&&\bar{\partial} \overline{\mathfrak{H}}(\mathcal{T})\\
	 	&=&
	 	\frac{1}{l_\mathcal{S}}(-1)^{\sum\limits_{j=1}^p|X_j|}\bar{\partial}
	 	\Big((\cdots ((((\cdots ((X_1\circ_{i_1}X_2)\circ_{i_2}\cdots )\circ_{i_{p-1}}X_p)\circ_{i_p} \mathcal{S})\circ_{j_1}Y_1)\circ_{j_2} \cdots)\circ_{j_q}Y_q\Big)\\
	 	&=&\frac{1}{l_\mathcal{S}}\Big(\sum_{k=1}^p(-1)^{\sum\limits_{j=1}^p|X_j|+\sum\limits_{j=1}^{k-1}|X_j|}\\
	 	&&(\cdots ((((\cdots ((\cdots(X_1\circ_{i_1} X_2)\circ_{i_2} \cdots ) \circ_{i_{k-1}}\bar{\partial} X_k )\circ_{i_k} \cdots)\circ_{i_{p-1}}X_p)\circ_{i_p} \mathcal{S})\circ_{j_1}Y_1)\circ_{j_2} \cdots)\circ_{j_q}Y_q\Big)\\
	 	&&+\frac{1}{l_\mathcal{S}}\Big((\cdots ((( (\cdots\cdots(X_1\circ_{i_1}X_2)\circ_{i_2}\cdots )\circ_{i_{p-1}}X_p)\circ_{i_p} \bar{\partial} \mathcal{S})\circ_{j_1}Y_1)\circ_{j_2} \cdots)\circ_{j_q}Y_q\Big)\\
	 	&&+\frac{1}{l_\mathcal{S}}\Big(\sum_{k=1}^q(-1)^{|\mathcal{S}|+\sum\limits_{j=1}^{k-1}|Y_j|}\\
	 	&&(\cdots ((\cdots((((\cdots(X_1\circ_{i_1}X_2)\circ\cdots )\circ_{i_{p-1}}X_p)\circ_{i_p} \mathcal{S})\circ_{j_1}Y_1)\circ_{j_2} \cdots ) \circ_{j_k}\bar{\partial} Y_{k})\circ_{j_{k+1}}\cdots )\circ_{j_q}Y_q\Big)
	 \end{eqnarray*}
	}}
 and
 	{\small{\begin{eqnarray*}\small
 	&&\overline{\mathfrak{H}}\bar{\partial} (\mathcal{T} )\\
 	&=& \overline{\mathfrak{H}}\bar{\partial} \Big((\cdots((((\cdots(X_1\circ_{i_1}X_2)\circ_{i_2}\cdots )\circ_{i_{p-1}}X_p)\circ_{i_p} \widehat{\mathcal{S}})\circ_{j_1}Y_1)\circ_{j_2} \cdots)\circ_{j_q}Y_q\Big)\\
 	&=& \overline{\mathfrak{H}}\Big(\sum\limits_{k=1}^p(-1)^{\sum\limits_{j=1}^{k-1}|X_j|}
 	(\cdots((((\cdots  ((\cdots(X_1\circ_{i_1} \cdots )\circ_{i_{k-1}}\bar{\partial} X_k)\circ_{i_k}\cdots)\circ_{i_{p-1}}X_p)\circ_{i_p} \widehat{\mathcal{S}})\circ_{j_1}Y_1)\circ_{j_2} \cdots)\circ_{j_q}Y_q\Big)\\
 	&&+  \sum_{k=1}^q(-1)^{\sum\limits_{j=1}^p|X_j| }    \overline{\mathfrak{H}} \Big((\cdots((((\cdots(X_1\circ_{i_1}X_2)\circ_{i_2}\cdots )\circ_{i_{p-1}}X_p)\circ_{i_p} \bar{\partial}\widehat{\mathcal{S}})\circ_{j_1}Y_1)\circ_{j_2} \cdots)\circ_{j_q}Y_q\Big) \\
 	&&+ \overline{\mathfrak{H}}\Big(\sum_{k=1}^q(-1)^{\sum\limits_{j=1}^p|X_j|+|\mathcal{S}|-1+\sum_{j=1}^{k-1}|Y_j|}\\
 	&&(\cdots((\cdots ((((\cdots(X_1\circ_{i_1}X_2)\circ_{i_2}\cdots )\circ_{i_{p-1}}X_p)\circ_{i_p} \widehat{\mathcal{S}})\circ_{j_1}Y_1)\circ_{j_2} \cdots)\circ_{j_k}\bar{\partial} Y_k)\circ_{j_{k+1}}\cdots\circ_{j_q}Y_q\Big)\\
 		&=&\frac{1}{l_\mathcal{S}}\Big(\sum_{k=1}^p(-1)^{\sum\limits_{j=1}^p|X_j|+\sum\limits_{j=1}^{k-1}|X_j|+1}\\
 	&&(\cdots ((((\cdots ((\cdots(X_1\circ_{i_1} X_2)\circ_{i_2} \cdots ) \circ_{i_{k-1}}\bar{\partial} X_k )\circ_{i_k} \cdots)\circ_{i_{p-1}}X_p)\circ_{i_p} \mathcal{S})\circ_{j_1}Y_1)\circ_{j_2} \cdots)\circ_{j_q}Y_q\Big)\\
 		&&+  \sum_{k=1}^q(-1)^{\sum\limits_{j=1}^p|X_j| }    \overline{\mathfrak{H}} \Big((\cdots((((\cdots(X_1\circ_{i_1}X_2)\circ_{i_2}\cdots )\circ_{i_{p-1}}X_p)\circ_{i_p} \bar{\partial}(\widehat{\mathcal{S}}-\bar{\partial} \mathcal{S}))\circ_{j_1}Y_1)\circ_{j_2} \cdots)\circ_{j_q}Y_q\Big) \\
 		&&+ \frac{1}{l_\mathcal{S}}\Big(\sum_{k=1}^q(-1)^{\sum\limits_{j=1}^p|X_j|+|\mathcal{S}|-1+\sum_{j=1}^{k-1}|Y_j|}\\
 	&&(\cdots((\cdots ((((\cdots(X_1\circ_{i_1}X_2)\circ_{i_2}\cdots )\circ_{i_{p-1}}X_p)\circ_{i_p}  \mathcal{S} )\circ_{j_1}Y_1)\circ_{j_2} \cdots)\circ_{j_k}\bar{\partial} Y_k)\circ_{j_{k+1}}\cdots\circ_{j_q}Y_q\Big)\\	&=&\frac{1}{l_\mathcal{S}}\Big(\sum_{k=1}^p(-1)^{\sum\limits_{j=1}^p|X_j|+\sum\limits_{j=1}^{k-1}|X_j|+1}\\
 	&&(\cdots ((((\cdots ((\cdots(X_1\circ_{i_1} X_2)\circ_{i_2} \cdots ) \circ_{i_{k-1}}\bar{\partial} X_k )\circ_{i_k} \cdots)\circ_{i_{p-1}}X_p)\circ_{i_p} \mathcal{S})\circ_{j_1}Y_1)\circ_{j_2} \cdots)\circ_{j_q}Y_q\Big)\\
 	&&+  \sum_{k=1}^q(-1)^{\sum\limits_{j=1}^p|X_j| }      \frac{1}{l_\mathcal{S}}\Big((\cdots((((\cdots(X_1\circ_{i_1}X_2)\circ_{i_2}\cdots )\circ_{i_{p-1}}X_p)\circ_{i_p}  \widehat{\mathcal{S}}-\bar{\partial} \mathcal{S}) \circ_{j_1}Y_1)\circ_{j_2} \cdots)\circ_{j_q}Y_q\Big) \\
 	&&+ \frac{1}{l_\mathcal{S}}\Big(\sum_{k=1}^q(-1)^{\sum\limits_{j=1}^p|X_j|+|\mathcal{S}|-1+\sum_{j=1}^{k-1}|Y_j|}\\
 	&&(\cdots((\cdots ((((\cdots(X_1\circ_{i_1}X_2)\circ_{i_2}\cdots )\circ_{i_{p-1}}X_p)\circ_{i_p}  \mathcal{S} )\circ_{j_1}Y_1)\circ_{j_2} \cdots)\circ_{j_k}\bar{\partial} Y_k)\circ_{j_{k+1}}\cdots\circ_{j_q}Y_q\Big),\\
 \end{eqnarray*}
}}
thus, we have $\bar{\partial} \overline{\mathfrak{H}}(\mathcal{T})+\overline{\mathfrak{H}}\bar{\partial}(\mathcal{T} )=\mathcal{T} $.
\end{proof}
	
	\begin{thm}\label{Thm: inclusion-exclusion model for the monomials of RBS}
		${}_m\RBS_\infty$ is the minimal model of $_m\RBS$.
	\end{thm}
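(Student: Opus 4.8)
The plan is to derive the theorem from the contracting homotopy $\overline{\mathfrak{H}}$ built in the preceding proposition, supplemented by a direct identification of the degree-zero homology; almost no new computation should be needed. First I would verify that $({}_m\RBS_\infty,\bar{\partial})$ meets conditions (i) and (ii) of Definition~\ref{Def: Minimal model of operads}. Decomposability is immediate from the defining formulae, since each of $\bar{\partial} m_n$, $\bar{\partial} R_n$, $\bar{\partial} S_n$ is a sum of tree monomials of weight at least two, so $\bar{\partial}$ carries the generating collection into $\mathcal{F}(M)^{\geqslant 2}$. For the weight filtration I would grade the generators by arity, taking $M_{(1)}=\bfk R_1\oplus\bfk S_1$ and $M_{(i)}=\bfk m_i\oplus\bfk R_i\oplus\bfk S_i$ for $i\geqslant 2$; because every summand occurring in $\bar{\partial} m_n,\bar{\partial} R_n,\bar{\partial} S_n$ is assembled from generators of strictly smaller arity, one obtains $\bar{\partial}(M_{(k+1)})\subset\mathcal{F}\big(\bigoplus_{i=1}^{k}M_{(i)}\big)$ for all $k\geqslant 1$, as required.

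Next I would introduce the comparison morphism. Define $\phi\colon{}_m\RBS_\infty\to{}_m\RBS$ on generators by $m_2\mapsto\mu$, $R_1\mapsto R$, $S_1\mapsto S$ and every other generator to $0$, extended as an operad map, with ${}_m\RBS$ regarded as a dg operad concentrated in degree $0$. To check that $\phi$ is a dg morphism I would confirm $\phi\circ\bar{\partial}=0$ on generators. As $\phi$ annihilates all generators except $m_2,R_1,S_1$, the only surviving summands of $\bar{\partial} m_n,\bar{\partial} R_n,\bar{\partial} S_n$ are those composed solely of $m_2,R_1,S_1$; a short arity count isolates these to $n=3$ (yielding $-m_2\circ_1 m_2\mapsto-\mu\circ_1\mu$), to $n=2$ for $R$ (yielding $R_1\circ_1 m_2\circ_1 R_1\mapsto R\circ_1(\mu\circ_1 R)$), and to $n=2$ for $S$ (yielding $S_1\circ_1 m_2\circ_1 R_1\mapsto S\circ_1(\mu\circ_1 R)$). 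Each image is one of the defining relations of ${}_m\RBS$, hence vanishes in the quotient, so $\phi\bar{\partial}=0$, and surjectivity is clear.

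It then remains to see that $\phi$ is a quasi-isomorphism, which I would handle degreewise. In positive degrees the identity of ${}_m\RBS_\infty$ is null-homotopic through $\overline{\mathfrak{H}}$ by the preceding proposition, so $\rmH_i({}_m\RBS_\infty)=0$ for every $i\geqslant 1$. For $i=0$ the degree-zero part is the free operad on $\{m_2,R_1,S_1\}$; since $\bar{\partial}$ kills these degree-zero generators, the Leibniz rule forces the image of $\bar{\partial}$ in degree $0$ to be exactly the operadic ideal generated by $\bar{\partial} m_3=-m_2\circ_1 m_2$, $\bar{\partial} R_2=R_1\circ_1 m_2\circ_1 R_1$ and $\bar{\partial} S_2=S_1\circ_1 m_2\circ_1 R_1$. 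Hence $\rmH_0({}_m\RBS_\infty)$ is the free operad on $\{m_2,R_1,S_1\}$ modulo precisely the relations defining ${}_m\RBS$, and $\phi$ induces the identification $\rmH_0({}_m\RBS_\infty)\cong{}_m\RBS$. Combining the two ranges, $\phi$ is a surjective quasi-isomorphism onto a degree-$0$ operad, so ${}_m\RBS_\infty$ is a minimal model of ${}_m\RBS$, and uniqueness up to isomorphism follows from the general theory quoted after Definition~\ref{Def: Minimal model of operads}.

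The genuine difficulty has already been absorbed into the preceding proposition, whose homotopy $\overline{\mathfrak{H}}$ is the real mechanism behind acyclicity; given that input the remaining work is bookkeeping. The only point demanding care is the twofold arity/weight analysis: ensuring both that $\phi$ is compatible with $\bar{\partial}$ and that the degree-$0$ image of $\bar{\partial}$ is exactly the ideal $\langle\mu\circ_1\mu,\ R\circ_1(\mu\circ_1 R),\ S\circ_1(\mu\circ_1 R)\rangle$, with no spurious extra relations slipping in.
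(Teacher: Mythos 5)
Your proposal is correct and follows essentially the same route as the paper: the paper's own proof consists of the contracting homotopy $\overline{\mathfrak{H}}$ established in the preceding proposition (killing all positive homology) together with the degree-zero computation identifying $\rmH_0({}_m\RBSinfty)$ with ${}_m\RBS$, which the paper compresses into the phrase ``by direct calculation''. Your write-up simply makes explicit the bookkeeping (the dg-morphism $\phi$, the isolation of $\bar{\partial}m_3$, $\bar{\partial}R_2$, $\bar{\partial}S_2$ as the degree-zero relations, and the minimality conditions) that the paper leaves implicit.
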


	\begin{proof}
		By direct calculation, we have $\rmH_0({}_m\RBS_\infty)  \cong {}_m\RBS $. And the homology of ${}_m\RBS_\infty$ concentrates in degree 0.
	\end{proof}
	
	\begin{remark}
		In   \cite{DK13}, the authors constructed an inclusion-exclusion model for the operad  with  monomial relations.     The  minimal model ${}_m\RBS_\infty$   is  essentially the inclusion-exclusion model for the  monomial operad $_m\RBS$.
	\end{remark}

\bigskip

\textbf{Acknowledgements}

This work was supported by  the National Key R  $\&$ D Program of China (No. 2024YFA1013803),  by Key Laboratory of Mathematics and Engineering Applications  (Ministry of Education), and by Shanghai Key Laboratory of PMMP (No.
22DZ2229014).

\textbf{Conflict of Interest}

None of the authors has any conflict of interest in the conceptualization or publication of this
work.

\textbf{Data availability}

Data sharing is not applicable to this article as no new data were created or analyzed in this study.

\bigskip

\end{document}